\newcolumntype{Y}{>{\centering\arraybackslash}X}
\definecolor{mygreen}{rgb}{0.0, 0.5, 0.0}
\author{Eugene Gorsky}
\address{University of California at Davis, Davis, California, US}
\address{International Laboratory of Representation Theory and Mathematical Physics, NRU-HSE, Moscow, Russia}
\email{egorskiy@math.ucdavis.edu}
\author{Mikhail Mazin }
\address{Kansas State University, Manhattan, Kansas, US}
\email{mmazin@math.ksu.edu}
\author{ Monica Vazirani} 
\address{University of California at Davis, Davis, California, US}
\email{vazirani@math.ucdavis.edu}
\title{Rational Dyck Paths in the Non Relatively Prime Case}
\keywords{rational Dyck paths, rational Catalan combinatorics,
 simultaneous core partitions, invariant integer subsets, semigroups}
\newtheorem{lemma}{Lemma}[section]
\newtheorem{theorem}[lemma]{Theorem}
\newtheorem{conjecture}[lemma]{Conjecture}
\newtheorem{corollary}[lemma]{Corollary}
\newtheorem{proposition}[lemma]{Proposition}
\theoremstyle{definition} 
\newtheorem{example}[lemma]{Example}
\newtheorem{remark}[lemma]{Remark}
\newtheorem{definition}[lemma]{Definition}
\DeclareMathOperator{\col}{col}
\DeclareMathOperator{\Rspan}{span}
\DeclareMathOperator{\Perm}{Perm}
\DeclareMathOperator{\An}{\mathcal{A}} 
\DeclareMathOperator{\Core}{Core}
\newcommand{\dinv}{\mathtt{dinv}}  
\newcommand{\arm}{\mathtt{arm}}  
\newcommand{\leg}{\mathtt{leg}}  
\newcommand{\area}{\mathtt{area}}  
\newcommand{\coarea}{\mathtt{co-area}}  
\newcommand{\gap}{\mathtt{gap}}  
\newcommand{\bepsilon}{\bar{\epsilon}}   
\newcommand{\miasing}{minimal integral acceptable shifting}
\newcommand{\Sym}{\mathcal{S}} 
\newcommand{\Sd}{\Sym_d}
\newcommand{\BZ}{\mathbb{Z}}
\newcommand{\BR}{\mathbb{R}}
\newcommand{\ZZ}{\mathbb{Z}_{\ge 0}}
\newcommand{\CD}{\mathcal{D}}
\newcommand{\CG}{\mathcal{G}}
\newcommand{\rect}{R}
\newcommand{\RNM}{\rect_{N,M}}
\newcommand{\M}{\mathbf{M}}  
\newcommand{\Mi}[2]{\M_{#1,#2}}
\newcommand{\Mnm}{\Mi{n}{m}}
\newcommand{\MNM}{\Mi{N}{M}}
\newcommand{\ver}{\mathtt{v}}
\newcommand{\hor}{\mathtt{h}}
\newcommand{\bver}{\mathbf{{v}}}
\newcommand{\bhor}{\mathbf{{h}}}
\newcommand{\verc}{\color{red} \mathtt{v}} 
\newcommand{\horc}{\color{blue}{\mathtt{h}}}
\newcommand{\Y}{Y}
\newcommand{\Yi}[2]{\Y_{{#1,#2}}}
\newcommand{\Ynm}{\Yi{n}{m}}
\newcommand{\YNM}{\Yi{N}{M}}
\newcommand{\nm}{(n,m)} 
\newcommand{\NM}{(N,M)} 
\newcommand{\rank}{\mathtt{rank}} 
\newcommand{\Z}{\mathbb{Z}}
\begin{document}
\begin{abstract}

We study the relationship between rational slope Dyck paths  and invariant subsets of $\BZ,$ extending the work of the first two authors in the relatively prime case. We also find a bijection between $(dn,dm)$--Dyck paths and $d$-tuples of $(n,m)$-Dyck paths endowed with certain gluing data. These are the first steps towards understanding the relationship between rational slope Catalan combinatorics
and the geometry of affine Springer fibers and knot invariants
in the non relatively prime case.

\end{abstract}
\date{\today}
\maketitle

\section{Introduction}
\label{sec-intro}

Catalan numbers, in one of their incarnations, count the number of Dyck paths, that is, 
the lattice paths in a square which never cross the diagonal. In recent years, a number of interesting 
results and conjectures \cite{Ar11,ALW14,BGLX14,CDH15,GM13,GM14,GMV14,GN15,Mellit,X15} about ``rational Catalan combinatorics'' have been formulated.
An $(n,m)$-Dyck path is a lattice path in an $n\times m$ rectangle, going from the bottom-right corner $(m,0)$ to the top-left corner $(0,n)$ and never going above the diagonal, which is the line that connects them.
We will denote the set of all $(n,m)$-Dyck paths by $\Ynm$. For coprime $m$ and $n$ there are a number 
of interesting maps involving $\Ynm$, see Figure \ref{commutative triangle}:
\begin{itemize}
\item[(a)] J. Anderson constructed a bijection $\An$ between $\Ynm$ and the set $\Core_{n,m}$ of simultaneous 
$(n,m)$-core partitions.
\item[(b)] Armstrong, Loehr, and Warrington defined a ``sweep'' map $\zeta:\Ynm\to \Ynm$ and conjectured that it is bijective. 
This conjecture was proved by Thomas and Williams in \cite{TW15}.
\item[(c)] The first two authors defined two maps $\CD$ and $\CG$ between $\Ynm$ and the set $\Mnm$ of $\nm$-invariant subsets of $\ZZ$
containing $0$. If combined with a natural bijection between $\Core_{n,m}$ and $\Mnm$, the map $\CD$ coincides with $\An$. Furthermore,
one can prove that $\zeta=\CG\circ \CD^{-1}$. As a consequence, the map $\CG$ is also bijective.
\end{itemize}


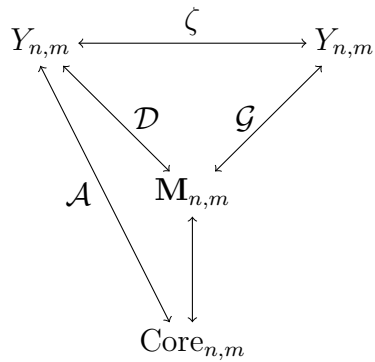
\begin{figure}[ht!]
\begin{center}
\begin{tikzpicture}
\draw (0,0) node {$\Ynm$};
\draw (2,-2) node {$\Mnm$};
\draw (4,0) node {$\Ynm$};
\draw (2,-4) node {$\Core_{n,m}$};
\draw [<->] (1.7,-1.7)--(0.3,-0.3);
\draw [<->] (0.5,0)--(3.5,0);
\draw [<->] (2.3,-1.7)--(3.7,-0.3);
\draw [<->] (2,-2.3)--(2,-3.7);
\draw [<->] (1.7,-3.7)--(0,-0.3);
\draw (1.4,-1) node {$\CD$};
\draw (2.7,-1) node {$\CG$};
\draw (2,0.3) node {$\zeta$};
\draw (0.5,-2) node {$\An$};
\end{tikzpicture}
\end{center}
\caption{Rational Catalan maps in the coprime case
\label{commutative triangle}}
\end{figure}

The goal of the present paper is a partial generalization of 
the diagram in 
Figure \ref{commutative triangle} to the non-coprime case. Let $(n,m)$ be relatively prime, and $d$ be a positive integer. Let $N=dn$ and $M=dm.$  The set $\YNM$
is well defined for all $n,m,d$, and the definition of $\zeta$ can be carried over with minimal changes. However, while 
the sets $\Core_{N,M}$ and $\MNM$ are still in bijection with
each other, the sets become infinite. Indeed, an $\NM$-invariant subset of $\ZZ$ can be identified with a collection of $d$ $\nm$-invariant subsets, one for  each remainder $\bmod\ d$.
These subsets won't necessarily have minimum element $0$, and so
we will want to shift or translate each a fixed amount.
we will want to shift or translate each a fixed amount.
More abstractly, this defines a map $\epsilon:\MNM\to (\Mnm)^{d}$ and different shifts correspond to different preimages under $\epsilon$.

To resolve this problem, we introduce a certain equivalence relation
$\sim$ on $\MNM$.  It satisfies that $\Delta_1\sim \Delta_2$ implies
$\bepsilon(\Delta_1)=\bepsilon(\Delta_2)$, where
$\bepsilon:\MNM\to (\Mnm)^{d} \to (\Mnm)^{d}/\mathord\Sd$,
 so $\bepsilon$ is well
defined on $\MNM/\mathord\sim$.  The following theorem is the main
result of the paper.

\begin{theorem}
For all positive $N,M$ one can define  maps $$\CD,\CG:\MNM/\mathord\sim\longrightarrow \YNM$$ such that the following 
results hold:
\begin{itemize}
\item[(a)] The maps $\CD$ and $\CG$ are bijective.
\item[(b)] The ``sweep'' map factorizes similarly to the coprime case: $\zeta=\CG\circ \CD^{-1}$.
\item[(c)] Let $d=\gcd(N,M),\ n=N/d,$ and $m=M/d.$ The composition 
$$
\col_d:=\CD^{d}\circ\epsilon\circ \CD^{-1}:\YNM\to  (\Ynm)^d/\mathord\Sd,
$$ 
can be described as follows:
color
the $N+M$ steps in an $(N,M)$-Dyck path with $d$ colors,
i.e., by $\Z/d\Z$, so that there are $n+m$ steps of the
same color $i$, and these steps
will 
form an $(n,m)$-Dyck path
after possibly translating connected components by  integer multiples
 of $\overrightarrow{(m,-n)}$ to make the $i$-colored steps  connected.
\end{itemize}
\end{theorem}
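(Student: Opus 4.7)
The plan is to extend the coprime-case construction of $\CD$ and $\CG$ to all of $\MNM$ and then show that the equivalence relation $\sim$ exactly captures the fibers of these maps. For $\Delta \in \MNM$, I would define $\CD(\Delta)$ and $\CG(\Delta)$ by the same local boundary-reading rules as in the coprime case: each integer in an appropriate range contributes a horizontal or vertical step of the resulting $\NM$-Dyck path according to its membership in $\Delta$. These rules are translation invariant in $\Delta$, so no normalization requiring $0 \in \Delta$ is needed.

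For part (a), I would first show that $\CD$ and $\CG$ both descend to well-defined maps on $\MNM/\mathord\sim$. Together with the hypothesis that $\sim$ implies equality under $\bepsilon$, this should follow from a direct analysis of how independently shifting each residue class of $\Delta$ affects the boundary path. I would then construct an explicit inverse to $\CD$ by reading a Dyck path backwards to recover the invariant subset up to shifting residue classes; since this shift ambiguity is exactly $\sim$, bijectivity of $\CD$ follows. Bijectivity of $\CG$ would then be deduced by first establishing part (b) and then invoking the Thomas--Williams theorem that $\zeta$ is bijective.

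For part (b), I would mirror the coprime-case proof of $\zeta = \CG\circ\CD^{-1}$ step by step. Since $\zeta$, $\CD$, and $\CG$ are built from local combinatorial operations that treat all residue classes on an equal footing, the argument should go through with only notational changes; this is the easier part.

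The main obstacle is part (c). For $\Delta \in \MNM$, the map $\epsilon$ splits $\Delta$ into its $d$ residue classes mod $d$, which after rescaling by $1/d$ and shifting by the minimum become elements $\Delta_0,\ldots,\Delta_{d-1} \in \Mnm$. My plan is to color step $k$ of $\CD(\Delta)$ by the residue mod $d$ of the integer whose membership in $\Delta$ determined that step; this automatically gives $n+m$ steps of each color. I would then argue that the $i$-colored steps, once maximal monochromatic connected components are translated by integer multiples of $\overrightarrow{(m,-n)}$, form precisely the path $\CD(\Delta_i)$. The vector $(m,-n)$ is parallel to the diagonals of both the $N\times M$ and $n\times m$ rectangles, and it corresponds to the natural periodicity of an $(n,m)$-invariant subset, so it is the right translation to consider. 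The hardest technical point will be verifying that the translated pieces concatenate into a genuine $(n,m)$-Dyck path rather than overlapping or overshooting; this should reduce to an index identity matching the steps of $\CD(\Delta)$ indexed by the $i$-th residue class with the steps of $\CD(\Delta_i)$.
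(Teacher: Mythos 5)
Your plan runs into a genuine obstacle at the very first step: you cannot define $\CD$ on $\MNM$ ``by the same local boundary-reading rules as in the coprime case.'' The coprime map $\CD(\Delta)$ takes the boxes of $R_{n,m}$ whose ranks lie in $\Delta$, and the fact that the result is a Young diagram relies on $\Delta$ being closed under adding $n$ and $m$. When $\gcd(N,M)=d>1$, an $(N,M)$-invariant $\Delta$ is only closed under adding $N=dn$ and $M=dm$, \emph{not} under adding $n$ and $m$; so ``boxes of $R_{N,M}$ with ranks in $\Delta$'' is typically not a Young diagram at all (ranks repeat along antidiagonals, and there is no guarantee the chosen boxes form a staircase). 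This is not a normalization issue, and it is not translation invariance that saves you. Consequently there is no direct $\CD:\MNM\to\YNM$ to quotient; the definition itself has to change. Relatedly, the claim ``the shift ambiguity is exactly $\sim$'' is precisely the nontrivial content, not something you can invoke: you first need to know that every equivalence class contains a distinguished representative from which a genuine Dyck path can be read off, and then that the ambiguity in choosing it is absorbed by $\sim$.

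What the paper actually does is quite different in structure. It first associates to $\Delta$ its \emph{skeleton} (the $N$-generators and $M$-cogenerators), splits it into residue classes mod $d$, and defines $\sim$ via ``acceptable shifts'' (continuous slides of the pieces that never collide). It then proves there is a \emph{minimal integral acceptable shift} and encodes the class as a labeled acyclic digraph $G\in T_{n,m}^d$ whose vertices carry the $d$ skeletons $s_i$ and whose edges record intersections. Finally $\CD$ is defined as a composition $\MNM/\mathord\sim\to T_{n,m}^d\to\YNM$, where the second arrow is an explicit \emph{gluing algorithm} that stitches together $(n+m)$-step intervals of the $d$ periodic paths $P_i$ in an order dictated by $G$, and is inverted by repeatedly removing ``good intervals.'' Once this is done, part~(c) is essentially read off from the construction (the color of a step is the index $i$ of the periodic path it was glued in from), whereas in your plan part~(c) is flagged as ``the main obstacle'' and parts~(a)--(b) are treated as routine extensions. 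Your sketch for $\CG$ and for deducing its bijectivity from part~(b) and Thomas--Williams is fine, as is the intuition that $(m,-n)$ is the correct translation vector in~(c); but without the skeleton/acceptable-shift/digraph machinery and the gluing algorithm, there is no definition of $\CD$ and hence no proof of~(a) or~(b).
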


As we do not have a canonical way of assigning colors, we
must pass to $\Sd$ orbits above.
We shall see in Section \ref{sec-equivrel} that the coloring is finer
than $\Sd$ orbits and in fact corresponds to an isomorphims class of a
labeled directed  graph with $d$ nodes.


\begin{figure}[ht!]
\begin{center}
\begin{tikzpicture}
\draw (0,0) node {$\YNM$};
\draw (2.1,-2) node {$\MNM/\mathord\sim$};
\draw (4,0) node {$\YNM$};
\draw (2,-4) node {$\left(\Mnm\right)^d/\mathord\Sd$};
\draw (-0.3,-2) node {$\left(\Ynm\right)^d/\mathord\Sd$};
\draw [<->] (1.7,-1.7)--(0.3,-0.3);
\draw [->] (0.5,0)--(3.5,0);
\draw [->] (2.3,-1.7)--(3.7,-0.3);
\draw [->] (2,-2.3)--(2,-3.7);
\draw [<->] (1.7,-3.7)--(0.3,-2.3);
\draw [->] (0,-0.3)--(0,-1.7);
\draw (1.4,-1) node {$\CD$};
\draw (2.7,-1) node {$\CG$};
\draw (2,0.3) node {$\zeta$};
\draw (2.3,-3) node {$\bepsilon$};
\draw (0.8,-3.3) node {$\CD^d$};
\draw (-0.5,-1) node {$\col_d$};
\end{tikzpicture}
\end{center}
\caption{Rational Catalan maps in the non-coprime case.
\label{commutative triangle 2} }
\end{figure}
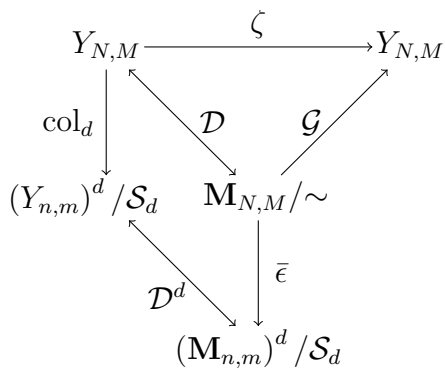

We illustrate all these maps in Figure \ref{commutative triangle 2}. We also give an explicit description of the ``coloring map'' $\col_d$,
as well as its inverse given proper gluing data.
In the ``classical'' case $M=N$ we get $d=N$ and $m=n=1$, therefore $\col_d$ colors a Dyck path in $n$ colors such that the
pairs of steps of the same color form a $(1,1)$-Dyck path. In this case the coloring is equivalent to presenting a Dyck path as a regular sequence of parentheses, with every opening and its corresponding closing parenthesis corresponding to the pair of steps of the same color.

We conjecture a relation between the  constructions of this paper, combinatorial identities and link invariants. 
Recall that the ``compositional rational shuffle conjecture" of \cite{BGLX14} (proved in \cite{Mellit}) relates a certain sum over $(N,M)$-Dyck paths to certain matrix elements of operators acting on symmetric functions. Here we propose a different sum over $(N,M)$-invariant subsets,
 and plan to clarify the relation between the two in the future work. We define the generating series:
\begin{equation}
 C_{N,M}(q,t)=\sum_{\Delta\in \MNM}q^{\gap(\Delta)}t^{\dinv(\Delta)},
\end{equation}
where
$$\gap(\Delta)=|\BZ_{\ge 0}\setminus \Delta|.$$
For $d=1$  it agrees with the rational $q,t$-Catalan polynomial \cite{GM13,Ar11}
$$
c_{N,M}(q,t)=\sum_{D\in \YNM}q^{\area(D)}t^{\dinv(D)},
$$
and it follows from the results of \cite{Mellit}
that:
\begin{equation}
\label{shuffle coprime}
C_{n,m}(q,t)=c_{n,m}(q,t)=\sum_{D\in \YNM}q^{\area(D)}t^{\dinv(D)}=(P_{n,m}(1),h_n).
\end{equation}
Here $P_{n,m}$ is a certain operator defined in \cite{GN15,BGLX14} and acting on the space of symmetric functions. 
In particular, the left hand side of \eqref{shuffle coprime} is symmetric in $q$ and $t$.
It was also proved in \cite{GN15} that the right hand side of \eqref{shuffle coprime} equals the ``refined Chern-Simons invariant" (in the sense of \cite{AS}) of the $(n,m)$ torus knot, and conjectured that it equals the Poincar\'e polynomial of the $(a=0)$ part of the Khovanov-Rozansky homology \cite{KhR} of this knot.

For $d>1$, the formula for $c_{N,M}(q,t)$ generalizing \eqref{shuffle coprime} was conjectured in \cite{BGLX14} and proved in \cite{Mellit}.
However, $C_{N,M}$ is now an infinite power series while $c_{n,m}$ is a finite polynomial.

\begin{conjecture}
\label{conj: links}
For general $d\ge 1$, the following statements hold:
\begin{itemize}
\item[(a)] One has $C_{N,M}(q,t)=\frac{1}{(1-q)^{d-1}}(P_{n,m}^{d}(1),h_{N})$, where $P_{n,m}$ is the same operator as in \eqref{shuffle coprime}.
\item[(b)] The series $C_{N,M}(q,t)/(1-q)$ agrees with the Poincar\'e series of the $(a=0)$ part of the Khovanov-Rozansky homology of the $(N,M)$ torus link.
\end{itemize}
\end{conjecture}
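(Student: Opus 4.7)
The plan is to establish part (a) by partitioning the infinite sum defining $C_{N,M}$ into $\sim$-equivalence classes, absorbing the infinite direction into a geometric series, and identifying the remaining finite sum with $(P_{n,m}^d(1), h_N)$ via the coloring map $\col_d$ and Mellit's shuffle theorem in the coprime case. Part (b) is then the corresponding topological statement, which already in the coprime case rests on the conjectural identification of the refined Chern--Simons invariant with Khovanov--Rozansky homology.

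For part (a) I would first unpack $\sim$. Since $\CD \colon \MNM/\mathord\sim \to \YNM$ is a bijection to a finite set while $\MNM$ is infinite, each $\sim$-class is infinite. An invariant subset $\Delta \in \MNM$ decomposes by residue modulo $d$ into slices $\Delta_0, \ldots, \Delta_{d-1}$, each stable under translation by $\langle N, M \rangle$; the slice $\Delta_0$ is canonically an $(n,m)$-invariant subset, while for $i \neq 0$ the minimal element of $\Delta_i$ can be freely shifted upward. I expect each $\sim$-class to be parametrized, once the isomorphism class of the slices is fixed, by the $d-1$ independent non-negative shifts of the nonzero residue slices, with $\dinv$ constant on the class and $\gap$ growing linearly in the shift parameters, so that
\begin{equation*}
\sum_{\Delta \in [\Delta_0]} q^{\gap(\Delta)} t^{\dinv(\Delta)} = \frac{q^{\gap_{\min}([\Delta_0])}\, t^{\dinv([\Delta_0])}}{(1-q)^{d-1}}.
\end{equation*}
The resulting finite sum over $\MNM/\mathord\sim \cong \YNM$, pushed forward along $\col_d$ to an $\Sd$-invariant sum over $(\Ynm)^d$, should then factor in the symmetric-function algebra as the pairing of $P_{n,m}^d(1)$ with $h_N$ by applying the coprime shuffle identity \eqref{shuffle coprime} to each factor and invoking multiplicativity of $P_{n,m}$.

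For part (b), the coprime base case is a conjecture of \cite{GN15}, where $(P_{n,m}(1), h_n)$ is identified with the refined Chern--Simons invariant of the $(n, m)$ torus knot. For $d > 1$, one interprets the $(N, M)$ torus link as the $d$-fold cable of the $(n, m)$ torus knot, with $P_{n,m}^d$ playing the role of a categorified cabling operator; the extra $\tfrac{1}{1-q}$ relative to (a) encodes the Markov/framing correction needed to pass from a polynomial invariant to the Poincar\'e series of a link. The main obstacle, shared by both parts, is to control $\dinv$: one must show that it is genuinely a $\sim$-class function and that, under $\CD \circ \col_d$, it corresponds to the tensor-product statistic produced by iterating $P_{n,m}$. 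This is a $d$-variable refinement of Mellit's argument, delicate because $\dinv$ on $\MNM$ mixes different residue slices in a way that is not obviously separable, and progress on (b) for $d > 1$ additionally requires either a direct categorified cabling construction or input from affine Springer fiber geometry.
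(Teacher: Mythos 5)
The statement you are trying to prove is a \emph{conjecture}, not a theorem: the paper explicitly leaves it open and only proves the special case $M=N$ of part~(b) (Theorem~\ref{EH comparison}), using the Elias--Hogancamp formula for the Poincar\'e series of torus-link homology. Even the case $M=N$ of part~(a) is pointed out in the paper to be equivalent to a still-open conjecture of Elias--Hogancamp. So no complete proof exists to compare against, and any sketch that purports to deduce the conjecture from results already in the paper must contain an error.

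The concrete gap in your argument is the displayed claim that each $\sim$-equivalence class contributes a full geometric factor $\frac{q^{\gap_{\min}}\,t^{\dinv}}{(1-q)^{d-1}}$. This is false. The equivalence class of $\Delta\in\MNM$ is the set of integral acceptable shiftings landing in $A_S$ \emph{and} giving elements of $\MNM$; this is a lattice-point set in a (possibly degenerate) polytope, not a free product $\ZZ^{d-1}$, and some classes are strictly smaller than you claim. Your own framework already shows this in the paper's worked example $C_{2,2}(q,t)=t+\frac{q}{1-q}$: the class of $\Delta_0=\BZ_{\ge 0}$ is a singleton (its skeleton parts $S_0=\{-2,0\}$, $S_1=\{-1,1\}$ are ``stuck'' in both directions), so it contributes $t$, not $\tfrac{t}{1-q}$. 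Your formula would give $\frac{q+t}{1-q}$ instead of the correct $\frac{q+t-qt}{1-q}$. A correct version of this step would in fact yield $(1-q)^{d-1}C_{N,M}(q,t)$ as a sum over classes of a more delicate weight than $q^{\gap_{\min}}t^{\dinv}$, and establishing that this sum equals $(P_{n,m}^{d}(1),h_N)$ is precisely the open content of part~(a). Note also that $\sum_{D\in\YNM}q^{\area(D)}t^{\dinv(D)}$ is Mellit's $c_{N,M}$, which in the non-coprime case is given by a different expression (the compositional $(km,kn)$-shuffle formula of \cite{BGLX14}), not by $(P_{n,m}^d(1),h_N)$; so even a corrected geometric-series step would not close the argument, and ``multiplicativity of $P_{n,m}$'' is not a known identity that makes $(P_{n,m}^d(1),h_N)$ reduce to a product of coprime pairings. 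The obstacles you flag at the end (controlling $\dinv$ across residue slices, needing cabling or affine Springer input) are real, but they are in addition to, not instead of, the incorrect geometric-series reduction.
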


The part (a) immediately implies that $C_{N,M}(q,t)(1-q)^{d-1}$ is symmetric in $q$ and $t$. To support the conjecture, we use a recent result of Elias and Hogancamp \cite{EH} to prove the following:

\begin{theorem}
\label{EH comparison}
Conjecture \ref{conj: links}(b) holds for $M=N$.
\end{theorem}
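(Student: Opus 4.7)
The plan is to prove the equality by computing both sides explicitly and matching. Since $M=N$ forces $d=N$ and $n=m=1$, each $\Delta\in\Mi{N}{N}$ is a union of arithmetic progressions $\{Nk+r : k\ge a_r\}$, one for each residue class mod $N$, and those with $\gap(\Delta)<\infty$ correspond bijectively to tuples $(a_1,\dots,a_{N-1})\in\ZZ^{N-1}$. In these coordinates $\gap(\Delta)=a_1+\cdots+a_{N-1}$, and $\dinv(\Delta)$ becomes an explicit polynomial function of the $a_r$. Summing over the tuples then yields a closed form for $C_{N,N}(q,t)$; the free $q$-summations over the $a_r$ contribute factors of $1/(1-q)$, one of which will absorb the denominator in the conjectured identity.

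For the topological side I would invoke the Elias--Hogancamp computation \cite{EH} of the triply graded Khovanov--Rozansky homology of the $(N,N)$ torus link, and read off the Poincar\'e series of its $(a=0)$ summand. The two resulting generating series are then compared directly; I expect the identification to follow from standard $q$-series manipulations and, if necessary, an induction on $N$ that peels off one strand at a time, matched on the combinatorial side by extracting the contribution of a single residue class.

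The main obstacle is the matching step: one must translate the Hecke-categorical output of \cite{EH} into a combinatorial generating function of the same shape as $C_{N,N}(q,t)/(1-q)$, with compatible normalizations and graded shifts between the $q$-, $t$-, and homological degrees. The appearance of a single $(1-q)$ factor, rather than $(1-q)^{N-1}$ as one might na\"ively expect from Conjecture \ref{conj: links}(a), is what makes the comparison delicate: exactly one power of $(1-q)$ cancels against the combinatorial generating series, and isolating the structural feature of the Elias--Hogancamp formula responsible for this cancellation (presumably a distinguished free variable in their recursion) is the combinatorial heart of the argument.
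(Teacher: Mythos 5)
Your high-level strategy does match the paper's: parametrize $N$-invariant subsets by tuples $(a_1,\dots,a_{N-1})$ with $a_0=0$, express $\gap$ and $\dinv$ in these coordinates, and compare with the Elias--Hogancamp formula; your guess that a ``distinguished free variable'' governs the $(1-q)$ factor is also the right instinct. However, the proposal as written is not yet a proof and contains a misconception at the central step.

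The claim that ``the free $q$-summations over the $a_r$ contribute factors of $1/(1-q)$'' and that $C_{N,N}(q,t)$ has a closed form from which a power of $(1-q)$ can be peeled off is false: the statistic $\dinv(\Delta)$ translates into the combinatorial inversion count $d(a)=|\{i<j: a_i=a_j\text{ or }a_j=a_i+1\}|$, which depends on the \emph{relative order} of the $a_i$, so the sum over $(a_1,\dots,a_{N-1})$ does not factor into independent geometric series and there is no clean rational closed form to exhibit. What the paper actually does instead is a direct term-by-term match: it cites the Elias--Hogancamp theorem in the precise form $F_n(q,t)=\sum_{a\in\BZ_{\ge0}^{n}}q^{\sum a_i}t^{d(a)}$, establishes a cyclic-shift lemma $\pi:(a_1,\dots,a_n)\mapsto(a_n-1,a_1,\dots,a_{n-1})$ preserving $d(a)$ and dropping $\sum a_i$ by one, deduces $(1-q)F_n(q,t)=\sum_{a_n=0}q^{\sum a_i}t^{d(a)}$, and then proves the identity $\dinv(\Delta)=\binom{n}{2}-\#\{i,j: y_j>x_i\}=d(a)$ via the generator/cogenerator coordinates $x_i=i+na_i$, $y_i=i+na_i-n$. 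You correctly intuit that setting one coordinate to zero (your ``distinguished free variable'') is where the single $(1-q)$ comes from, but without the explicit $d(a)$ statistic, the $\dinv=d$ identity, and the cyclic-shift argument, the ``matching step'' you acknowledge as the main obstacle remains unaddressed, and the proposal does not constitute a proof.
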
 

In the case $M=N$, part (a) of the conjecture is equivalent to \cite[Conjecture 1.15]{EH} (see also \cite{Wi}), 
but, to our knowledge, it is still open. For general $M$ and $N$, it fits into the framework of conjectures of
\cite{AS,GN15,GNR}, and we refer the reader to these references for more details.

\section*{Acknowledgements}
\label{sec-ack}

We would like to thank Fran\c{c}ois Bergeron and Nathan Williams for the useful discussions. A preliminary version of the paper was reported at the FPSAC 2016 conference \cite{GMV16}. The work of E.\,G. was partially supported by the NSF grant DMS-1559338, Hellman fellowship, grant RSF 16-11-10160 and Russian Academic Excellence Project ’5-100’.
NSF grant DMS-1559338 partially supported collaborative visits
by M.\,M. M.M. participation in FPSAC 2016 was supported by a KSU start-up grant.

\section{Relatively prime case} 
\label{sec-coprime}

Let $(n,m)$ be a pair of relatively prime positive integers. Consider an $n\times m$ rectangle $R_{n,m}.$ Let $\Ynm$ be the set of Young diagrams that fit under the diagonal in $R_{n,m}.$ We will often abuse notation by identifying a diagram $D\in \Ynm$ with its boundary path (sometimes also called a {\it rational Dyck path}), and with the corresponding partition. We will also think about the rectangle $R_{n,m}$ as a set of boxes, identified with a subset in $\ZZ$ with the 
bottom-left
corner box identified with $(0,0).$ In our convention,
 $n$ is the height of $R_{n,m}$ and $m$ is its width;
and
the boundary path of $D\subset R_{n,m}$ follows the boundary from the
bottom-right corner to the top-left corner. 
See
Example \ref{ex-zeta} below.
In Section \ref{sec-glue}, it will also be convenient to
identify the path $D$ with a function (or its plot) $[0, n+m] \to \BR^2$.

There are two important combinatorial statistics on the set $\Ynm:$ $\area$ and $\dinv.$

\begin{definition}
Let $D\in \Ynm.$ Then $\area(D)$ is equal to the number of whole boxes that fit between the diagonal of $R_{n,m}$ and the boundary path of $D.$
\end{definition}

Note that $\area(D)$ ranges from $0$ for the full diagram to
$$\delta=\frac{(m-1)(n-1)}{2}$$
for the empty diagram.
The $\coarea(D) = \delta - \area(D)$ is then just the number
of boxes in the Young diagram $D$. 
 One natural approach to the $\dinv$ statistic is to define the map $\zeta:\Ynm\to \Ynm$ and then set $\dinv(D):=\area(\zeta(D)).$ In the case $m=n+1$ the map $\zeta$ was first defined by Haglund (\cite{Hd08}), then it was generalized by Loehr to the case $m=kn+1$ for any $k\in \ZZ$  (\cite{L05}), and to the general case of any relatively prime $(n,m)$ by Gorsky and Mazin in \cite{GM13}. In \cite{ALW14} it was put into even larger framework of so called sweep maps. Below is one of the equivalent possible definitions.

\begin{definition} The {\it rank} of a box $(x,y)\in \BZ^2$ is given by the linear function
$$
\rank(x,y)=mn-m-n-nx-my.
$$
\end{definition}

Note that the boxes of non-negative ranks are exactly those that fit under the bottom-right to top-left diagonal of $R_{n,m}.$ Let $D\in \Ynm.$ One ranks the steps of the boundary path of $D$ as follows.

\begin{definition}
The rank of a vertical step of $D$ is equal to the rank of the box immediately to the left of it. The rank of a horizontal step is equal to the rank of the box immediately above it.
\end{definition}

\noindent
In other words, the ranks of steps can be defined inductively as follows. We follow the boundary path of $D$ starting from the bottom-right corner. The first step is ranked $-m.$ Otherwise,  the rank of each step equal to the rank of the previous step plus $n,$ if the previous step is horizontal, and it equals to the rank of the previous step minus $m,$ if the previous step is vertical.
Note the last step is ranked $0$ (and is vertical).

Note that for relatively prime $(n,m)$ all the ranks of the steps of a diagram $D\in \Ynm$ are distinct.

\begin{definition}
The boundary path of the diagram $\zeta(D)$ is obtained from the boundary path of $D$ by rearranging the steps in the increasing order of ranks.
\end{definition}
 
The definition of the map $\zeta$ is illustrated in Example \ref{ex-zeta}. One can verify that the diagram $\zeta(D)$ fits under the diagonal of $R_{n,m}$ (see \cite{GM13} and \cite{ALW14}). The following result is considerably harder, see also \cite{GM14, Hd08, L05, X15} for partial results in this direction.

\begin{theorem}[\cite{TW15}]
The map $\zeta$ is bijective.
\end{theorem}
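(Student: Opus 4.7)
The plan is to establish bijectivity of $\zeta$ by constructing an explicit inverse algorithm that undoes the rank-sorting, in the spirit of Thomas and Williams. The key observation driving everything is a sharp form of the Dyck condition at the level of ranks: for any $D\in\Ynm$, a direct computation shows that the rank of the $k$-th step is $-m + a_k n - b_k m$, where $a_k$ and $b_k$ count the horizontal and vertical steps preceding step $k$. The Dyck inequality $n a_k \ge m b_k$ is then exactly the statement that every step of $D$ has rank at least $-m$, and, since ranks are distinct in the coprime case, this minimum is achieved only at $k=1$. In particular, step $1$ of $D$ is uniquely characterized as the step of smallest rank, and it is necessarily horizontal.

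Writing $D' := \zeta(D)$, the steps of $D'$ are listed in order of increasing rank of $D$, so the first step of $D'$ is exactly step $1$ of $D$. I would then proceed recursively: having committed the first $k$ steps of $D$ in order (with their ranks determined by the updates $+n$ or $-m$), I would attempt to identify step $k+1$ as the unused step of $D'$ whose rank in $D$ matches the next target value. The subtlety is that this target value is not in general the next-smallest rank of $D$, and the ranks of unused steps of $D'$ cannot be read off from $D'$ in isolation. The right way around this is to reframe the inversion as a recursion in a broader class of lattice paths with a shifted rank function, in which the analogue of the minimum-rank lemma continues to hold after each peeling step.

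The main technical obstacle is to verify that this broader class is closed under peeling and that the minimum-rank characterization persists throughout the induction. Concretely, one must formulate an invariant asserting that, at each stage, the first unused step of $D'$ has rank (in the partially reconstructed $D$) equal to the current target, and show that committing this step and stripping it off preserves the invariant. Once this is done, the recursion produces a uniquely determined preimage for each $D'\in\Ynm$, giving injectivity of $\zeta$; since $|\Ynm| = \tfrac{1}{n+m}\binom{n+m}{n}$ is finite, injectivity forces bijectivity. I expect the preservation of the Dyck-type inequality under peeling to be the hardest part of the argument, since it requires tracking exactly how the combinatorics of staying under the diagonal restrict to residual subpaths after early steps have been removed.
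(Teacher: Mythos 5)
The paper cites this theorem to Thomas and Williams \cite{TW15} and does not give its own proof, so there is no internal argument to compare against. Your sketch starts correctly: in the coprime case the ranks are distinct, the minimum rank $-m$ is attained exactly at step~$1$ of $D$, and hence the first step of $D'=\zeta(D)$ is step~$1$ of $D$. But the recursion you propose fails at the very next step. Take $n=2$, $m=3$, $D=\mathtt{hhvhv}$, whose steps have ranks $-3,-1,1,-2,0$; then $D'=\mathtt{hhhvv}$, and its five steps come from the steps of $D$ of ranks $-3,-2,-1,0,1$, in that order. After committing step~$1$ (rank $-3$, horizontal), the target rank for step~$2$ is $-3+n=-1$, yet the first unused step of $D'$ is the step of $D$ of rank $-2$, not $-1$. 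Moreover the minimum-rank characterization does not survive peeling: removing step~$1$ leaves a residual with ranks $-1,1,-2,0$, whose minimum sits in the third position, not the first, and shifting the rank function by a constant cannot move it. So the invariant you formulate is simply false in this form.

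You acknowledge the difficulty and say the fix is to ``reframe the inversion as a recursion in a broader class of lattice paths with a shifted rank function,'' but you never specify what that class is, what the rank function is, or what modified invariant is supposed to hold. That unspecified reframing is not a detail to be filled in later---it is essentially the entire content of the Thomas--Williams argument, which introduces a carefully designed larger family of lattice data (their modular sweep map) precisely so that an induction of this shape can go through, because the residual after peeling one step is not an $\Ynm$-type Dyck path and the minimum-rank lemma does not persist naively. As written, your argument reduces the theorem to an unstated lemma of comparable depth to the theorem itself. The closing finiteness step (injectivity plus $|\Ynm|<\infty$ gives bijectivity) is fine, but it only becomes available once the inversion algorithm is shown to be well-defined, which is exactly the part left open.
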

The following approach to studying the map $\zeta$ was suggested
in \cite{GM13}.

\begin{definition}

We say that a subset $\Delta\subset\mathbb Z_{\ge 0}$ is $\nm$-invariant and $0$-normalized if $\Delta+m\subset\Delta,$ $\Delta+n\subset\Delta,$ and $\min(\Delta)=0.$
Let $\Mnm$ be the set of all such subsets $\Delta$.
\end{definition}

In \cite{GM13} two maps $\CD$ and $\CG$ from the set $\Mnm$ to $\Ynm$ were constructed. 

\begin{definition}
Let $\Delta\in \Mnm.$ The diagram $\CD(\Delta)$ consists of all boxes in $R_{n,m}$
whose ranks belong to $\Delta$.
\end{definition}

Clearly, $\CD(\Delta)$ fits under the diagonal.
In particular, one gets that $\CD(\Gamma_{n,m})=\emptyset,$ where $\Gamma_{n,m}:=\{an+bm\ |\ a,b\in\ZZ\}$ is the semigroup generated by $n$ and $m,$ and $\CD(\ZZ)$ is the full diagram containing all the boxes below the diagonal.  Note that the $\nm$-invariance of $\Delta$ implies that $\CD(\Delta)$ is indeed a Young diagram. Note also that $\CD$ is a bijection. Indeed, it is  not hard to see that rank provides a bijection between the boxes below the diagonal in $R_{n,m}$ and the integers in $\ZZ\setminus \Gamma_{n,m}.$ 

It is also important to sometimes consider the {\em periodic extension} $P(\Delta)$ of the boundary path of $\CD(\Delta).$ Equivalently, it can be defined as the infinite lattice path separating the boxes in $\Z^2$ which ranks belong to $\Delta$ from the boxes which ranks belong to the complement $\Z\backslash\Delta.$ We will call such paths $(n,m)$-periodic.
 See Figure \ref{Figure: diagrams coprime} for an example.  

\begin{remark}
J. Anderson in \cite{A02} defined a bijection between $\Ynm$ and the set $\Core_{n,m}$ of $(n,m)$-cores, that is, Young diagrams with no hooks of length $n$ or $m$. The  standard bijection between $\Core_{n,m}$ and $\M_{n,m}$ identifies Anderson's bijection with the map $\CD$, see e.g \cite{GM14} for details.
\end{remark}

\begin{definition}
The numbers $0=a_0<a_1<\ldots<a_{n-1},$ such that 
$$
\{a_0,\ldots,a_{n-1}\}=\Delta\setminus (\Delta+n)
$$
are called {\it the $n$-generators of $\Delta.$} The numbers $\{b_0<b_1<\ldots<b_{m-1}\}$ such that  
$$
\{b_0,\ldots,b_{m-1}\}=(\Delta-m)\setminus \Delta
$$
are called {\it the $m$-cogenerators of $\Delta.$}
\end{definition}

\begin{remark}
Let $D=\CD(\Delta).$ The ranks of the vertical steps of $D$ are exactly the $n$-generators of $\Delta,$ and the ranks of the horizontal steps of $D$ are exactly the $m$-cogenerators of $\Delta.$
We will often mark $n$-generators by $\times$ and $m$-cogenerators
by $\square$.
\end{remark}

\begin{definition}\label{Definition: map G}
The diagram $\CG(\Delta)$ has row lengths $g_0,\ldots,g_{n-1}$ given by the following formula:
$$
g_k=\sharp\{b_i\ |\ b_i>a_k\}.
$$
Equivalently, the boundary path of $\CG(\Delta)$ can be obtained by rearranging the set $$S=\{a_0,\ldots,a_{n-1},b_0,\ldots,b_{m-1}\}$$ in increasing order and replacing $n$-generators by 
vertical 
steps and $m$-cogenerators by 
horizontal 
steps, from bottom right to top left. 
\end{definition}

The next result follows   from the above definitions.
\begin{proposition}\cite{GM13, GM14}
The following identity holds:
$$
\zeta(D)=\CG\circ \CD^{-1}(D).
$$
\end{proposition}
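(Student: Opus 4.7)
The plan is to recognize that once the preceding Remark is in hand, the proposition is a tautological unpacking of definitions. Fix $D \in \Ynm$ and set $\Delta = \CD^{-1}(D) \in \Mnm$, which is well defined because $\CD$ is a bijection. We must show that applying $\CG$ to $\Delta$ produces the same lattice path as applying $\zeta$ to $D$.

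First, I would invoke the Remark following Definition \ref{Definition: map G}'s setup: the ranks of the $n$ vertical steps along the boundary of $D$ are exactly the $n$-generators $a_0 < a_1 < \cdots < a_{n-1}$ of $\Delta$, and the ranks of the $m$ horizontal steps are exactly the $m$-cogenerators $b_0 < \cdots < b_{m-1}$. Second, I would recall that by definition $\zeta(D)$ is the path obtained by rearranging the $n+m$ boundary steps of $D$ in strictly increasing order of rank. Third, by Definition \ref{Definition: map G}, $\CG(\Delta)$ is constructed by sorting the disjoint union $\{a_0,\dots,a_{n-1}\}\sqcup\{b_0,\dots,b_{m-1}\}$ in increasing order and rendering each $a_k$ as a vertical step and each $b_j$ as a horizontal step. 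Since $\gcd(n,m)=1$ forces all $n+m$ ranks to be distinct, the sorted sequence is unique, and by the Remark the two constructions order the same labelled steps in the same way. Hence $\zeta(D) = \CG(\Delta) = (\CG \circ \CD^{-1})(D)$.

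The only substantive step is the Remark itself, which is the main obstacle. To justify it one checks: if the $k$-th vertical step of $D$ runs from $(x,y)$ to $(x,y+1)$ with $x\geq 1$, then the box $(x-1,y)$ immediately to its left is the rightmost cell of row $y$ in $D$, while the box $(x,y)$ immediately to its right is not in $D$. Thus $\rank(x-1,y)\in \Delta$ while $\rank(x-1,y)-n=\rank(x,y)\notin \Delta$, realizing this rank as an element of $\Delta\setminus(\Delta+n)$, i.e., an $n$-generator; the edge case $x=0$ is handled using $(n,m)$-periodicity of $P(\Delta)$. Since there are exactly $n$ vertical steps and exactly $n$ $n$-generators, the induced map is a bijection between them. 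The identical argument, applied to the box just above a horizontal step versus the box just below it (whose ranks differ by $m$), identifies the ranks of horizontal steps with the $m$-cogenerators.

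With the Remark established, the identity $\zeta = \CG\circ \CD^{-1}$ is immediate from comparing the two rearrangement procedures, completing the proof.
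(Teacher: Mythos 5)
Your proof is correct, and it takes exactly the approach the paper leaves implicit: the paper states only ``The next result follows from the above definitions'' and cites \cite{GM13,GM14}, with the key ingredient being the Remark identifying ranks of vertical steps with $n$-generators and ranks of horizontal steps with $m$-cogenerators. Your unpacking of the Remark (observing that the rank $a$ of a vertical step satisfies $a\in\Delta$, $a-n\notin\Delta$ since the box to its left lies in $D$ and the box to its right does not, and dually for horizontal steps) is the right argument, and the edge cases you wave at (steps along the boundary of $R_{n,m}$) do go through because the relevant box ranks are non-negative multiples of $n$ or $m$, hence always in $\Gamma_{n,m}\subset\Delta$; your deduction that distinctness of ranks under $\gcd(n,m)=1$ forces $\zeta(D)$ and $\CG(\Delta)$ to sort the same labelled list identically is exactly what makes the proposition immediate once the Remark is in hand.
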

\begin{corollary}
Since $\zeta$ and $\CD$ are bijective, the map $\CG$ is a bijection too.
\end{corollary}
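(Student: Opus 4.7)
The plan is to simply unwind the identity $\zeta=\CG\circ\CD^{-1}$ from the preceding proposition. Composing both sides on the right with $\CD$, I would rewrite this as
\[
\CG=\zeta\circ\CD.
\]
Since $\CD\colon \Mnm\to \Ynm$ is bijective (as noted right after its definition, because rank sets up a bijection between the boxes of $R_{n,m}$ below the diagonal and $\ZZ\setminus\Gamma_{n,m}$, and $\nm$-invariance matches the description of subsets $\Delta$), and $\zeta\colon\Ynm\to\Ynm$ is bijective by the Thomas--Williams theorem cited above, the composition $\zeta\circ\CD$ is a bijection from $\Mnm$ to $\Ynm$.

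There is really no obstacle here: the corollary is a one-line formal consequence, and all the work is hidden in the two inputs, namely the bijectivity of $\CD$ (elementary, from the rank correspondence) and the bijectivity of $\zeta$ (the deep result of \cite{TW15}). The only thing to double-check is that the domains and codomains match up as asserted, so that the composition $\zeta\circ\CD$ really does land in $\Ynm$ and is defined on all of $\Mnm$; both are immediate from the definitions recalled in this section.
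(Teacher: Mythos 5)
Your proof is correct and is exactly the paper's argument: the corollary follows formally by rewriting $\zeta=\CG\circ\CD^{-1}$ as $\CG=\zeta\circ\CD$ and observing that a composition of bijections is a bijection. Nothing more is needed.
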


\begin{example} \label{ex-zeta}

For example, if $n=5,$ $m=3,$ and
$\Delta=\{0,3,5,6,7,8,\dots\}$ then the $5$-generators of $\Delta$ are $0,3,6,7,9$
and $3$-cogenerators are $-3,2,4.$
The diagram $\CD(\Delta)$ consists of one box,
which has rank $7.$ The ranked boundary path of $D$ is 
$$
\begin{array}{cccccccc}
         \horc&\horc&\verc&\horc&\verc&\verc&\verc&\verc\\

-3&2&7&4&9&6&3&0
\end{array}
$$
read bottom to top,
which we sort to the boundary path of $\zeta(D)$
$$
\begin{array}{cccccccc}
\horc& \verc&  \horc& \verc&\horc &\verc&\verc&\verc\\
-3&0&2&3&4&6&7&9
\end{array}
$$
See Figure \ref{Figure: diagrams coprime} for the diagrams $D$ and $\zeta(D).$ Note, that if one takes the union of the $5$-generators and $3$-cogenerators and reads them in the increasing order, then one gets $-3,0,2,3,4,6,7,9.$ Replacing generators by ``$\ver$'' and cogenerators by ``$\hor$'', one gets $\mathtt{hvhvhvvv},$ which is the boundary path of $\zeta(D).$

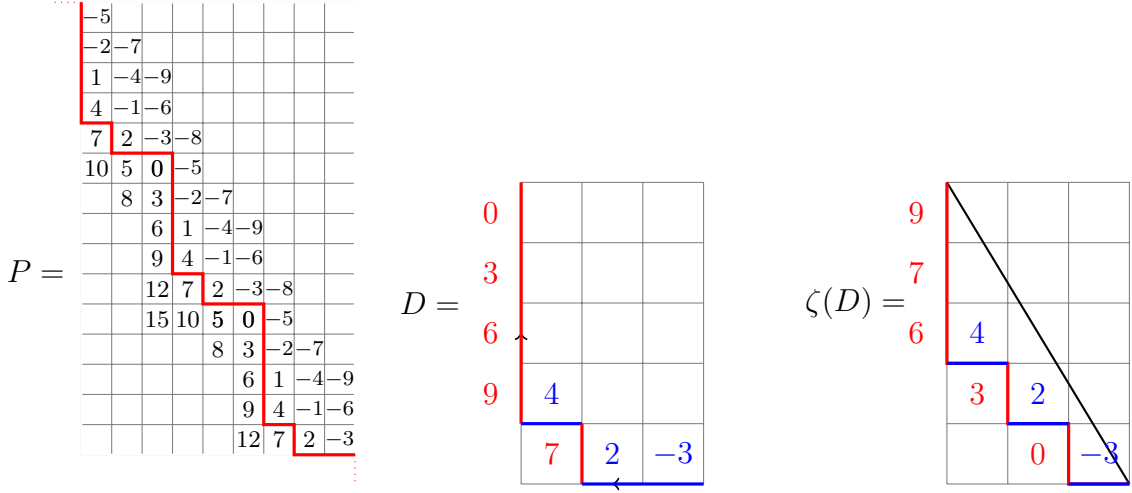
\begin{figure}
\begin{center}
\begin{tikzpicture}[scale=0.4]
\draw [step=1.0, gray] (-3,-5) grid (6,10);
\draw [white, thick] (-3,-5)--(4,-5); 
\draw [white, thick] (-3,10)--(6,10);
\draw [white, thick] (-3,-5)--(-3,6); 
\draw [white, thick] (6,-5)--(6,10);

\draw (-0.5,4.5) node {\scriptsize{$0$}};
\draw (-0.5,3.5) node {\scriptsize{$3$}};
\draw (-0.5,2.5) node {\scriptsize{$6$}};
\draw (-0.5,1.5) node {\scriptsize{$9$}};
\draw (-0.5,0.5) node {\scriptsize{$12$}};
\draw (0.5,5.5) node {\tiny{$-8$}};
\draw (0.5,4.5) node {\tiny{$-5$}};
\draw (0.5,3.5) node {\tiny{$-2$}};
\draw (0.5,2.5) node {\scriptsize{$1$}};
\draw (0.5,1.5) node {\scriptsize{$4$}};
\draw (0.5,0.5) node {\scriptsize{$7$}};
\draw (1.5,3.5) node {\tiny{$-7$}};
\draw (1.5,2.5) node {\tiny{$-4$}};
\draw (1.5,1.5) node {\tiny{$-1$}};
\draw (1.5,0.5) node {\scriptsize{$2$}};
\draw (2.5,2.5) node {\tiny{$-9$}};
\draw (2.5,1.5) node {\tiny{$-6$}};
\draw (2.5,0.5) node {\tiny{$-3$}};
\draw (-.5,-0.5) node {\scriptsize{$15$}};
\draw (.5,-0.5) node {\scriptsize{$10$}};
\draw (1.5,-0.5) node {\scriptsize{$5$}};
\draw (2.5,-0.5) node {\scriptsize{$0$}};
\draw (1.5,-0.5) node {\scriptsize{$5$}};
\draw (1.5,-1.5) node {\scriptsize{$8$}};

\draw (-2.5,9.5) node {\tiny{$-5$}};
\draw (-2.5,8.5) node {\tiny{$-2$}};
\draw (-2.5,7.5) node {\scriptsize{$1$}};
\draw (-2.5,6.5) node {\scriptsize{$4$}};
\draw (-2.5,5.5) node {\scriptsize{$7$}};
\draw (-1.5,8.5) node {\tiny{$-7$}};
\draw (-1.5,7.5) node {\tiny{$-4$}};
\draw (-1.5,6.5) node {\tiny{$-1$}};
\draw (-1.5,5.5) node {\scriptsize{$2$}};
\draw (-0.5,7.5) node {\tiny{$-9$}};
\draw (-0.5,6.5) node {\tiny{$-6$}};
\draw (-0.5,5.5) node {\tiny{$-3$}};
\draw (-2.5,4.5) node {\scriptsize{$10$}};
\draw (-1.5,4.5) node {\scriptsize{$5$}};
\draw (-0.5,4.5) node {\scriptsize{$0$}};
\draw (-1.5,3.5) node {\scriptsize{$8$}};

\draw (2.5,-0.5) node {\scriptsize{$0$}};
\draw (2.5,-1.5) node {\scriptsize{$3$}};
\draw (2.5,-2.5) node {\scriptsize{$6$}};
\draw (2.5,-3.5) node {\scriptsize{$9$}};
\draw (2.5,-4.5) node {\scriptsize{$12$}};
\draw (3.5,0.5) node {\tiny{$-8$}};
\draw (3.5,-0.5) node {\tiny{$-5$}};
\draw (3.5,-1.5) node {\tiny{$-2$}};
\draw (3.5,-2.5) node {\scriptsize{$1$}};
\draw (3.5,-3.5) node {\scriptsize{$4$}};
\draw (3.5,-4.5) node {\scriptsize{$7$}};
\draw (4.5,-1.5) node {\tiny{$-7$}};
\draw (4.5,-2.5) node {\tiny{$-4$}};
\draw (4.5,-3.5) node {\tiny{$-1$}};
\draw (4.5,-4.5) node {\scriptsize{$2$}};
\draw (5.5,-2.5) node {\tiny{$-9$}};
\draw (5.5,-3.5) node {\tiny{$-6$}};
\draw (5.5,-4.5) node {\tiny{$-3$}};

\draw [red, very thick] (-3,10)--(-3,6)--(-2,6)--(-2,5)--(0,5)--(0,1)--(1,1)--(1,0)--(3,0)--(3,-4)--(4,-4)--(4,-5)--(6,-5);
\draw (-4.5,1.1) node {$P=$};
\draw [red, dotted] (-3,10)--(-4,10); 
\draw [red, dotted] (6,-5)--(6,-6); 
\end{tikzpicture}
\ \ 
\begin{tikzpicture}[scale=0.8]
\draw [step=1.0, gray] (0,0) grid (3,5);

\draw [black, thick, ->] (0,2)--(0,2.5);
\draw [black, thick, ->] (2,0)--(1.5,0);
\draw [red] (-0.5,4.5) node {$0$};
\draw  [red]  (-0.5,3.5) node {$3$};
\draw   [red] (-0.5,2.5) node {$6$};
\draw  [red]  (-0.5,1.5) node {$9$};
\draw  [blue] (0.5,1.5) node {$4$};
\draw  [red] (0.5,0.5) node {$7$};
\draw  [blue] (1.5,0.5) node {$2$};
\draw  [blue] (2.5,0.5) node {$-3$};

\draw [red, very thick] (0,5)--(0,1);
\draw [blue, very thick] (0,1)--(1,1);
\draw [red, very thick] (1,1)--(1,0);
\draw [blue, very thick] (1,0)--(3,0);
\draw (-1.5,3) node {$D=$};
\draw [step=1.0, gray] (7,0) grid (10,5);

\draw [black, thick] (7,5)--(10,0);

\draw [red, very thick] (7,5)--(7,2)--(8,2)--(8,1)--(9,1)--(9,0)--(10,0);
\draw (5.5,3) node {$\zeta(D)=$};
\draw [blue, very thick] (7,2)--(8,2);
\draw [blue, very thick] (8,1)--(9,1);
\draw [blue, very thick] (9,0)--(10,0);
\draw [red] (8.5,0.5) node {$0$};
\draw  [red]  (7.5,1.5) node {$3$};
\draw   [red] (6.5,2.5) node {$6$};
\draw  [red] (6.5,3.5) node {$7$};
\draw  [red]  (6.5,4.5) node {$9$};
\draw  [blue] (8.5,1.5) node {$2$};
\draw  [blue] (9.5,0.5) node {$-3$};
\draw  [blue] (7.5,2.5) node {$4$};
\end{tikzpicture}
\end{center}
\caption{Here $n=5$ and $m=3.$ On the left is (a fragment of) the periodic path $P=P(\{0,3,5,6,7,8,\dots\}),$ center is the diagram $D=\CD(\{0,3,5,6,7,8,\dots\}),$ and on the right is the diagram $\zeta(D).$\label{Figure: diagrams coprime}}
\end{figure}

\end{example}

The approach with invariant subsets allows one to relate the $\dinv$ statistic to geometry. Let $V=\mathbb C[t]/t^{2\delta}\mathbb C[t]$ be the ring of polynomials of degree less than $2\delta=(m-1)(n-1).$ Let $Gr(\delta,V)$ be the Grassmannian of half-dimensional subspaces in $V.$ Consider the subvariety $J_{n,m}\subset Gr(\delta,V)$ consisting of subspaces in $V$ invariant under multiplication by $t^m$ and $t^n:$
$$
J_{n,m}=\{U\in Gr(\delta,V):t^mU\subset U,\ t^nU\subset U\}.
$$ 
These varieties appear in algebraic geometry as local versions of the compactified Jacobians (see Beauville \cite{Be99} and Piontkowski \cite{P07}), and in representation theory as homogeneous affine Springer fibers, where they were first considered by Lusztig and Smelt in \cite{LS91} and then by Piontkowski \cite{P07}. Both Lusztig and Smelt, and Piontkowski showed that $J_{n,m}$ has a natural decomposition into complex affine cells enumerated by elements
of $\Mnm.$ Moreover, the dimension of the cell $C_{\Delta}$ corresponding to an invariant subset $\Delta\in \Mnm$ is given by 
$$\dim_{\mathbb C} C_{\Delta}=|\CG(\Delta)|=\delta-\dinv(\Delta).$$

Therefore, one gets the following theorem.

\begin{theorem}[\cite{GM13}]
The Poincar\'e polynomial $P_{n,m}(t)$ of the variety $J_{n,m}$ is given by
$$
P_{n,m}(t)=\sum\limits_{D\in \Ynm}t^{2(\delta-\dinv(D))}.
$$ 
\end{theorem}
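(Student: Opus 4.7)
The plan is to combine the Lusztig--Smelt--Piontkowski cell decomposition cited just above with the bijection $\CD:\Mnm\to\Ynm$ from \cite{GM13}. Since every ingredient is already in place, the proof is essentially an assembly, but one needs to upgrade the decomposition from a stratification to an \emph{affine paving} in order to read off the Betti numbers directly.

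The cleanest route is to exhibit the cells as Bia\l ynicki--Birula attracting sets for a $\BC^*$-action. The action $t\mapsto \lambda t$ on $V=\BC[t]/t^{2\delta}\BC[t]$ preserves the conditions $t^mU\subset U$ and $t^nU\subset U$, hence restricts to $J_{n,m}$; its fixed points are precisely the monomial subspaces $\bigoplus_{i\in \Delta}\BC\cdot t^i$ indexed by $\Delta\in \Mnm$, and the cells $C_\Delta$ constructed by Lusztig--Smelt and Piontkowski agree with the corresponding attracting neighborhoods. A standard induction on the partial order of cells, using the long exact sequence of cohomology associated to an open stratum and its closed complement together with the vanishing of odd-degree cohomology of affine space, gives $H^{\mathrm{odd}}(J_{n,m},\BZ)=0$ and shows $H^{2k}(J_{n,m},\BZ)$ to be free of rank equal to the number of cells of complex dimension $k$.

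Therefore
$$
P_{n,m}(t)\;=\;\sum_{\Delta\in \Mnm}t^{2\dim_\BC C_\Delta}\;=\;\sum_{\Delta\in \Mnm}t^{2(\delta-\dinv(\Delta))},
$$
where we invoke the dimension formula $\dim_\BC C_\Delta=|\CG(\Delta)|=\delta-\dinv(\Delta)$ stated above. Reindexing via the bijection $\CD$ and using that $\dinv(\CD(\Delta))=\area(\zeta(\CD(\Delta)))=\area(\CG(\Delta))=\delta-|\CG(\Delta)|=\dinv(\Delta)$, we obtain
$$
P_{n,m}(t)=\sum_{D\in \Ynm}t^{2(\delta-\dinv(D))},
$$
as claimed. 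The only nontrivial geometric input is the cell decomposition itself, which is the Lusztig--Smelt--Piontkowski theorem cited above and is taken as given; everything else is the combinatorial bookkeeping identifying $|\CG(\Delta)|$ with the number of boxes in the diagram $\CG(\Delta)$ and then with $\delta-\dinv$ through the relation $\area+\coarea=\delta$. The main potential obstacle, should one want a self-contained argument, is precisely the affine-paving upgrade; but the $\BC^*$-action above makes this automatic.
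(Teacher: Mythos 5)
Your argument is correct and matches the paper's own (one-line) derivation: both simply combine the Lusztig--Smelt--Piontkowski affine cell decomposition of $J_{n,m}$ with the stated dimension formula $\dim_\BC C_\Delta = |\CG(\Delta)| = \delta - \dinv(\Delta)$ and reindex via the bijection $\CD$. The extra content you supply --- the Bia{\l}ynicki--Birula attracting-set picture and the long-exact-sequence induction that upgrades the cell decomposition to an affine paving computing Betti numbers --- is precisely the technical point the paper compresses into ``Therefore, one gets the following theorem,'' and is a welcome clarification, with the caveat (which you acknowledge) that the affineness of those attracting sets on the generally singular $J_{n,m}$ is itself the Lusztig--Smelt--Piontkowski input rather than an automatic consequence of the $\BC^*$-action.
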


Moreover, bijectivity of the map $\zeta$ (or, equivalently, the map $\CG$) implies a simpler formula:

$$
P_{n,m}(t)=\sum\limits_{D\in \Ynm}t^{2|D|},
$$
where $|D|=\delta-\area(D)$ is simply the number of boxes in $D.$

\section{Non-relatively prime case.}
\label{sec-noncoprime}

\subsection{Sweep map}
\label{sec-sweep}

The notion of a rational Dyck path naturally generalizes to the non relatively prime case. Let $(n,m)$ be relatively prime, and $d$ be a positive integer. Let $N=dn$ and $M=dn.$ Consider an $N\times M$ rectangle $R_{N,M}$ and the set $\YNM$ of Young diagrams that fit under the diagonal in $R_{N,M}.$ The $\area$ statistic can be generalized directly. The $\dinv$ statistic and the map $\zeta$ are a bit more tricky. It is convenient to adjust the rank function on the boxes in the following way:
$$
\rank(x,y)=dmn-m-n-nx-my.
$$
The steps of the boundary path of a diagram $D\in \YNM$ are ranked as before with respect to the new rank function.  The first step is still 
ranked $-m$ and the inductive description of the ranks still holds with respect to $+n,-m$; it still holds that the boxes with non-negative rank
are those below the diagonal.
However, for $d>1$ some distinct steps might have the same  rank, therefore rearranging the steps of the path according to their rank is problematic. The following idea for overcoming this difficulty was suggested by Fran\c{c}ois Bergeron. It can also be found in \cite{ALW14}.

\begin{definition}
Let $D\in \YNM.$ The boundary path of the diagram $\zeta(D)$ is obtained from the boundary path of $D$ by rearranging the steps so their ranks
are weakly increasing. If two steps have the same rank, then they are ordered in the reversed order of appearance in the boundary path of $D.$
\end{definition}

\begin{example}\label{Example: zeta nonrp}
Consider the diagram $D\in Y_{9,6}$ with the boundary path
$\mathtt{hvhvvhhhvhvvvvv}$
(see Figure \ref{Figure: diagrams non-coprime}). The ranked boundary path of $D$ is 
$$
\begin{array}{ccccccccccccccc}
\hor&
\bver&
\hor&
\ver&
                \ver&
\hor&
\bhor&
\bhor&
\ver&
         \hor& 
\ver&
\ver&
\bver&
\ver&
\ver
\\
-2 & 1 & -1 & 2 & 0 & -2 & 1 & 4 & 7 & 5 & 8 & 6 & 4 & 2 & 0
\end{array}
$$
which we sort to the boundary path of $\zeta(D)$
$$
\begin{array}{ccccccccccccccc}
\hor&
\hor&
\hor&
                \ver&
\ver&
\bhor&
\bver&
\ver&
\ver&
         \bver&
\bhor&
\hor&
\ver&
\ver&
\ver
\\
-2
&
-2
&
-1
&
0
&
0
&
1
&
1
&
2
&
2
&
4
&
4
&
5
&
6
&
7
&
8.
\end{array}
$$
Note there are two steps of rank $4$ in the boundary path of $D:$
when read bottom to top on the path but left to right above,
first there is a horizontal step, and then there is a
vertical step. In the boundary path of  $\zeta(D)$ the order of these two steps is reversed. Similarly for the two steps of rank $1.$


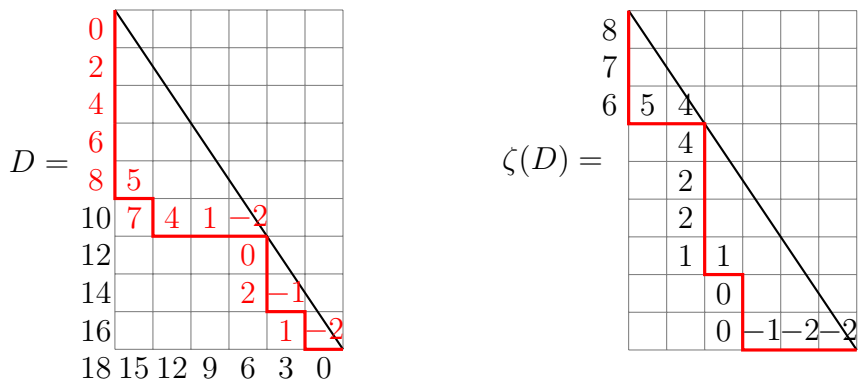
\begin{figure}
\begin{center}
\begin{tikzpicture}[scale=0.5]
\draw [step=1.0, gray] (0,0) grid (6,9);

\draw [black, thick] (0,9)--(6,0);

\draw [red, very thick] (0,9)--(0,6);
\draw [red, very thick] (2,3)--(4,3);
\draw [red, very thick] (0,6)--(0,4)--(1,4)--(1,3)--(2,3);
\draw [red, very thick] (4,3)--(4,1)--(5,1)--(5,0)--(6,0);

\draw [red] (-0.5,8.5) node {$0$};
\draw [red] (-0.5,7.5) node {$2$};
\draw [red] (-0.5,6.5) node {$4$};
\draw [red] (-0.5,5.5) node {$6$};
\draw [red] (-0.5,4.5) node {$8$};
\draw [red] (0.5,4.5) node {$5$};
\draw [red] (0.5,3.5) node {$7$};
\draw [red] (1.5,3.5) node {$4$};
\draw [red] (2.5,3.5) node {$1$};
\draw [red] (3.5,3.5) node {$-2$};
\draw [red] (3.5,2.5) node {$0$};
\draw [red] (3.5,1.5) node {$2$};
\draw [red] (4.5,1.5) node {$-1$};
\draw [red] (4.5,0.5) node {$1$};
\draw [red] (5.5,0.5) node {$-2$};
\draw (-0.5,0.5) node {$16$};
\draw (-0.5,-0.5) node {$18$};
\draw (-0.5,1.5) node {$14$};
\draw (-0.5,2.5) node {$12$};
\draw (-0.5,3.5) node {$10$};
\draw (0.5,-0.5) node {$15$};
\draw (1.5,-0.5) node {$12$};
\draw (2.5,-0.5) node {$9$};
\draw (3.5,-0.5) node {$6$};
\draw (4.5,-0.5) node {$3$};
\draw (5.5,-0.5) node {$0$};
\draw (-2,5) node {$D=$};

\end{tikzpicture}
\qquad \qquad
\begin{tikzpicture}[scale=0.5]
\draw [step=1.0, gray] (0,0) grid (6,9);

\draw [black, thick] (0,9)--(6,0);

\draw [red, very thick] (0,9)--(0,6)--(2,6)--(2,2)--(3,2)--(3,0)--(6,0);

\draw  (1,-0.75) node {};

\draw  (-0.5,8.5) node {$8$};
\draw  (-0.5,7.5) node {$7$};
\draw  (-0.5,6.5) node {$6$};
\draw  (0.5,6.5) node {$5$};
\draw  (1.5,6.5) node {$4$};
\draw  (1.5,5.5) node {$4$};
\draw  (1.5,4.5) node {$2$};
\draw (1.5,3.5) node {$2$};
\draw (1.5,2.5) node {$1$};
\draw (2.5,2.5) node {$1$};
\draw (2.5,1.5) node {$0$};
\draw (2.5,0.5) node {$0$};
\draw (3.5,0.5) node {$-1$};
\draw (4.5,0.5) node {$-2$};
\draw (5.5,0.5) node {$-2$};
\draw (-2,5) node {$\zeta(D)=$};

\end{tikzpicture}
\end{center}
\caption{Here $n=9$ and $m=6.$ On the left is the diagram $D$ with the boundary path 
$\mathtt{hvhvvhhhvhvvvvv}$
marked with ranks;
and on the right is the diagram $\zeta(D).$\label{Figure: diagrams non-coprime}}
\end{figure}

\end{example}

Now the statistic $\dinv$ can be defined as
$$
\dinv(D):=\area(\zeta(D)).
$$
Note that in \cite{BGLX14} a different definition of $\dinv$ for the non relatively prime case is used:
$$
\dinv'(D):=\sharp\left\{\square\in D\colon\frac{\leg(\square)}{\arm(\square)+1}<\frac{n}{m}\le\frac{\leg(\square)+1}{\arm(\square)}\right\}.
$$

\begin{lemma}
\label{lemma-dinv}
One has
$$
\dinv(D)=\dinv'(D)
$$
for any $D\in \YNM.$
\end{lemma}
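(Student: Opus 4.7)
The plan is to show both $\dinv(D)$ and $\dinv'(D)$ count the same set of rank-admissible pairs of boundary steps of $D$, via the classical bijection between cells of a Young diagram and pairs of its boundary steps. To each cell $\square \in D$ with $\arm(\square) = a$ and $\leg(\square) = \ell$, I would associate the vertical step $v(\square)$ at the right end of the arm row and the horizontal step $h(\square)$ at the top of the leg column. This gives a bijection between cells of $D$ and pairs $(v,h)$ of boundary steps with $v$ vertical, $h$ horizontal, and $v$ preceding $h$ in the path traversed from $(M,0)$ to $(0,N)$. Since the portion of the path between $v(\square)$ and $h(\square)$ contains exactly $a$ horizontal and $\ell$ vertical steps, iterating the inductive rank rule ($+n$ after a horizontal step, $-m$ after a vertical step) yields
\[
\rank(v(\square)) - \rank(h(\square)) = m(\ell+1) - na.
\]

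A direct manipulation shows that the inequality $\tfrac{\ell}{a+1} < \tfrac{n}{m} \le \tfrac{\ell+1}{a}$ in the definition of $\dinv'$ is equivalent to $0 \le \rank(v(\square)) - \rank(h(\square)) < m+n$, so that
\[
\dinv'(D) = \#\{(v,h) \,:\, v \text{ precedes } h \text{ in } D,\ 0 \le \rank(v) - \rank(h) < m+n\}. \tag{$\ast$}
\]

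For the other side, I would express $\area(\zeta(D))$ as a rank-pair count. Using the definition of $\zeta$ with the specified tie-breaking, the column of the $i$th vertical step of $\zeta(D)$ equals $M$ minus the number of horizontal steps of $D$ whose rank is strictly less than $\rank(v_i)$, plus those with equal rank but positioned after $v_i$ in $D$. Summing over $i$ gives
\[
|\zeta(D)| = NM - \#\{(v,h) \,:\, \rank(v) > \rank(h)\} - \#\{(v,h) \,:\, \rank(v)=\rank(h),\ v \text{ precedes } h \text{ in } D\}.
\]
Subtracting from $\delta_{N,M}$ and invoking the identity
\[
NM - \delta_{N,M} = \#\{(v,h) \,:\, h \text{ precedes } v \text{ in } D,\ \rank(v) > \rank(h)\},
\]
(which follows from the cell/pair bijection together with the Dyck condition that $D$ fits under the diagonal, together with a short check that no pair with $v$ preceding $h$ and rank difference exceeding $m+n$ exists) reduces $\area(\zeta(D))$ to the right-hand side of $(\ast)$.

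The main obstacle is verifying the last rank-count identity in the non-coprime setting, where the tie-breaking convention plays a decisive role. In the coprime case there are no ties and the identity is essentially Proposition~1.3 of \cite{GM13}; in the non-coprime case one must check that the rule ``equal-rank steps appear in $\zeta(D)$ in the reverse order of their appearance in $D$'' produces exactly the contribution needed for the two sides to match.
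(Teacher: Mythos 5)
Your high-level reduction is the same as the paper's, and the parts you work out carefully are correct: the box-to-step-pair bijection $\square \mapsto (\ver_\square, \hor_\square)$, the rank formula giving $\rank(\ver_\square) - \rank(\hor_\square)$ in terms of arm and leg for $\square \in D$, the translation $(\ast)$ of the $\dinv'$ inequalities into $0 \le \rank(\ver_\square) - \rank(\hor_\square) < m+n$, and the expression for $|\zeta(D)|$ in terms of rank comparisons with your tie-breaking rule. The gap is the auxiliary identity you invoke,
$$
NM - \sharp R_{N,M}^+ \;=\; \#\{(v,h)\colon h \text{ precedes } v \text{ in } D,\ \rank(v) > \rank(h)\},
$$
together with the accompanying ``short check'' that no $\square\in D$ has $\rank(\ver_\square)-\rank(\hor_\square)\ge m+n$. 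Both are false.

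Counterexample: already in the coprime case $(N,M)=(n,m)=(5,3)$, take $D$ with row lengths $(1,1,1,0,0)$ from the bottom, i.e.\ boundary path $\mathtt{hhvvvhvv}$. For $\square=(0,0)\in D$ one has $\rank(\ver_\square)=7$ and $\rank(\hor_\square)=-2$, so the rank gap is $9 \ge 8 = m+n$ and the ``short check'' fails. Correspondingly $NM - \sharp R_{N,M}^+ = 15-4=11$, while only $10$ of the $12$ boxes outside $D$ satisfy $\rank(\ver_\square)>\rank(\hor_\square)$; and your manipulation reduces $\area(\zeta(D))$ to $\#\{v\text{ prec }h,\ \rank(v)\ge\rank(h)\}=3$, whereas $\area(\zeta(D))=\dinv'(D)=2$. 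The identity that actually holds is
$$
NM - \sharp R_{N,M}^+ \;=\; \#\{h \text{ prec } v,\ \rank(v) > \rank(h)\} \;+\; \#\{v \text{ prec } h,\ \rank(v)-\rank(h)\ge m+n\},
$$
and the extra term on the right is exactly the set you claim is empty. Rewritten in arm/leg language this is precisely Corollary $1$ of \cite{Mn13}, which is the nontrivial external input the paper cites at this exact step. So your outline is structurally the paper's proof, but with the one genuinely hard combinatorial lemma replaced by an elementary claim that fails for very small examples; supplying that lemma (or citing \cite{Mn13}) would make your argument go through.
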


\begin{proof}
This result essentially follows from Corollary $1$ on page $8$ in \cite{Mn13}.
 For every box $\square\in R_{N,M}$ there is exactly one horizontal step
$\hor_\square$ of the Dyck path $D$ in the same column, and exactly one
vertical step $\ver_\square$ of $D$ in the same row. This provides a bijection
between the boxes in $R_{N,M}$ and the couples: one vertical step of $D$ and one
horizontal step of $D$. The reordering of the steps according to $\zeta$ gives rise to a bijective map $\phi:R_{N,M}\to R_{N,M},$ where the box $\phi(\square)$ corresponds to the pair of steps of $\zeta(D)$ obtained from $\hor_\square$ and $\ver_\square$ by reordering according to $\zeta$.

With the terminology above, $\arm(\square)$ is the number of boxes strictly between
the box $\square\in R_{N,M}$ and the horizontal step $\hor_\square$ of the boundary of $D,$ whereas its $\leg(\square)$ is the number of boxes strictly between the box $\square$ and $\ver_\square$. Observe, $\ver_\square$ appears in the path before
$\hor_\square$ if and only if $\square\in D.$
Consider two cases:
\begin{enumerate}
\item Suppose $\square\in D,$ then one has
$$
\rank(\hor_\square)=\rank(\square)-m(\leg(\square)+1),
$$ 
and
$$
\rank(\ver_\square)=\rank(\square)-n\arm(\square).
$$ 
One gets $\phi(\square)\in\zeta(D)$ if and only if after the reordering the step in $\zeta(D)$ corresponding to $\ver_\square$ comes before the step corresponding to $\hor_\square.$ According to the definition of $\zeta,$ in this case it is equivalent to $\rank(\ver_\square)<\rank(\hor_\square),$ which is in turn equivalent to

$$
\frac{\leg(\square)+1}{\arm(\square)}<\frac{n}{m}.
$$
\item Suppose $\square\in R_{N,M}\backslash D.$ Similarly, one gets
$$
\rank(h_\square)=\rank(\square)+m\leg(\square),
$$ 
and
$$
\rank(v_\square)=\rank(\square)+n(\arm(\square)+1).
$$ 
In this case,
$$
\frac{\leg(\square)}{\arm(\square)+1}\ge \frac{n}{m}
$$
if and only if $\rank(v_\square)\le \rank(h_\square),$ if and only if $\phi(\square)\in\zeta(D).$

\end{enumerate}

Since by definition $\dinv(D)=\sharp R_{N,M}^+-\sharp\zeta(D),$ where $R_{N,M}^+$ is the set of boxes in $R_{N,M}$ that fit under the diagonal, one gets
$$
\dinv(D)=\sharp R_{N,M}^+ -\sharp\left\{\square\in D:\frac{\leg(\square)+1}{\arm(\square)}<\frac{n}{m}\right\}-\sharp\left\{\square\in R_{N,M}\backslash D:\frac{\leg(\square)}{\arm(\square)+1}\ge\frac{n}{m}\right\}
$$
by the above considerations. Corollary $1$ on page $8$ in \cite{Mn13} proves
$$
\sharp\left\{\square\in R_{N,M}\backslash D:\frac{\leg(\square)}{\arm(\square)+1}\ge \frac{n}{m}\right\}=\sharp\left\{\square\in D:\frac{\leg(\square)}{\arm(\square)+1}\ge \frac{n}{m}\right\}+\sharp (R_{N,M}^+\backslash D).
$$ 
Therefore, we conclude that
$$
\dinv(D)=\sharp D-\sharp\left\{\square\in D:\frac{\leg(\square)+1}{\arm(\square)}<\frac{n}{m}\right\}-\sharp\left\{\square\in D:\frac{\leg(\square)}{\arm(\square)+1}\ge \frac{n}{m}\right\}
$$
$$
=\sharp\left\{\square\in D\colon\frac{\leg(\square)}{\arm(\square)+1}<\frac{n}{m}\le\frac{\leg(\square)+1}{\arm(\square)}\right\}=\dinv'(D).
$$
\end{proof}

The cardinality of the sets $\YNM$ of Dyck paths get more complicated in the non-relatively prime. In \cite{Bizley} Bizley shows that
$$
\exp(\sum_{d \ge 1} \frac{1}{d(m+n)} \binom{d(m+n)}{dm} x^d)
$$
is the generating function whose coefficients give the cardinalities of $\YNM$, where $(N,M) = (dn,dm)$ for $\gcd(n,m)=1$. 

On the other hand, the set $\MNM$ of subsets $0\in\Delta\subset\ZZ$ invariant under addition of $M$ and $N$ is infinite when $\gcd(N,M)=d>1.$ Therefore, there is no hope to construct a bijection between the set of such subsets and $\YNM.$
However, the map $\CG: \MNM\to \YNM$ is still well defined. We define an equivalence relation $\sim$ on the set $\MNM,$ so that the relative order of $N$-generators and $M$-cogenerators, and hence the value of $\CG$, is the same within each equivalence class. We then construct a bijection $\CD$ between the equivalent classes $\MNM/\mathord\sim$ and $\YNM,$ so that one gets $\zeta=\CG\circ \CD^{-1}$ as in the $d=1$ case.

Comparing the definition of $\CG$ to Lemma \ref{lemma-dinv}, one can
see that $\dinv$ is constant on the fibres of $\CG$. Further, each
fiber of $\CG$
is a union of
$\sim$ equivalence classes. In fact, each
equivalence class is precisely one fiber by the result of Thomas and Williams\cite{TW15} showing that $\zeta$ is always bijective.

Given an $(N,M)$-invariant subset $\Delta\in \MNM$ one can extract $d$ many $(n,m)$-invariant subsets from it by the following procedure: for each $r\in\{0,1\dots,d-1\}$ consider the subset in $\Delta$ consisting of all integers congruent to $r$ modulo $d,$ subtract $r$ from all these elements and then divide by $d.$ In other words one has
\begin{gather} \label{colorDelta}
\Delta_r=\left[\left(\Delta\cap (d\mathbb Z+r)\right)-r\right]/d.
\end{gather}
Note that the subsets $\Delta_r$ for $r>0$ might not be $0$-normalized. Note also that $\Delta$ can be uniquely reconstructed from $\Delta_0,\dots,\Delta_{d-1},$ so we have a bijection between the set of $0$-normalized $(N,M)$-invariant subsets and (ordered or $\Z/d\Z$-colored) collections of $d$
many $(n,m)$-invariant subsets, such that $\Delta_0$ is zero normalized and $\Delta_i\subset \mathbb Z_{\ge 0}$ for all $i.$

\begin{remark}
\label{rem-core}
There is a natural bijection, extending Anderson's construction,
 between the set of $(N,M)$-invariant subsets
and the set of $(N,M)$-cores. If $\lambda$ is an $(N,M)$-core
corresponding to $\Delta$ then one can check that the $d$-quotient of
$\lambda$ (\cite{Littlewood}) consists of $d$ diagrams each of which are
$(n,m)$-cores.  They naturally correspond to $\Delta_0,\ldots,
\Delta_{d-1}$.
\end{remark}

\subsection{Equivalence relation}
\label{sec-equivrel}

The idea of the equivalence relation is that one should fix the collection $\Delta_0,\dots,\Delta_{d-1}$ up to shifts, but allow them to slide with respect to each other as long as the $N$-generators and $M$-cogenerators of $\Delta$ do not ``jump'' over each other. 
It is motivated by making the invariant sets in the same fiber of $\CG$ equivalent.
Recall the map $\CG$ only cares about the relative order of the
$N$-generators and $M$-cogenerators. We will analyze
an equivalence class by understanding all the positions the generators
and cogenerators can fill while retaining this relative order.
 This analysis will allow us to construct a representative in the equivalence class
of $\Delta \in \MNM$ which has the minimal number of gaps,
and it is on that representative that we can define $\CD$. Later, we will describe the equivalence class of $\Delta$
in terms of rank data from the $\Delta_r$ along with appropriate
gluing data. 
 
Let us  first explain the equivalence relation with an example.

\begin{example}
\label{ex-equiv}
Let $(N,M)=(6,4).$ The following two elements of $\YNM$ are equivalent. Let $\Delta^1$ be given by: 
\begin{center}
\begin{tabular}{cccccccccccccccc}
$0$&$1$&$2$&$3$&$4$&$5$&$6$&$7$&$8$&$9$&$10$&$11$&$12$&$13$&$14$&$\dots$ \\
$\times$&$\circ$&$\square$&$\circ$&$\times$&$\square$&$\bullet$&$\circ$&$\times$&$\times$&$\bullet$&$\square$&$\bullet$&$\times$&$\bullet$&
$\dots$
\end{tabular}
\end{center}
and $\Delta^2:$
\begin{center}
\begin{tabular}{cccccccccccccccc}
$0$&$1$&$2$&$3$&$4$&$5$&$6$&$7$&$8$&$9$&$10$&$11$&$12$&$13$&$14$&$\dots$ \\
$\times$&$\circ$&$\square$&$\circ$&$\times$&$\circ$&$\bullet$&$\square$&$\times$&$\circ$&$\bullet$&$\times$&$\bullet$&$\square$&$\bullet$&$\dots$
\end{tabular}.
\end{center}

\noindent
Here $\times$'s are $6$-generators, $\square$'s are the $4$-cogenerators, $\bullet$'s are other elements of the subset, and $\circ$'s are the other elements of the complement. Note that not all 6 generators and 4 cogenerators fit in the pictures. It is more illustrative
to split $\Delta^1$ into 
its even and odd parts:

\begin{center}
\begin{tabular}{lcccccccccccccccccccccccccc}
$r=0$
&
$0$&&$2$&&$4$&&$6$&&$8$&&$10$&&$12$&&$14$
&$\dots$ \\
&
{\color{red}$\times$}&&{\color{red}$\square$}&&{\color{red}$\times$}&&{\color{red}$\bullet$}&&{\color{red}$\times$}&&{\color{red}$\bullet$}&&{\color{red}$\bullet$}
&&{\color{red}$\dots$}\\
$r=1$ &&
$1$&&$3$&&$5$&&$7$&&$9$&&$11$&&$13$&
&$\dots$ \\
&&
{\color{blue}$\circ$}&&{\color{blue}$\circ$}&&{\color{blue}$\square$}&&{\color{blue}$\circ$}&&{\color{blue}$\times$}&&{\color{blue}$\square$}&
&{\color{blue}$\dots$}\\
\end{tabular}.
\end{center}

\noindent
It is more compact 
to then stack them as

\begin{center}
\begin{tabular}{lccccccccccccc}
$r=0$ &$-4$&$-2$&$0$&$2$&$4$&$6$&$8$&$10$&$12$&$14$&$16$&$18$&$\dots$ \\
&{\color{red}$\square$}&{\color{red}$\circ$}&{\color{red}$\times$}&{\color{red}$\square$}&{\color{red}$\times$}&{\color{red}$\bullet$}&{\color{red}$\times$}&{\color{red}$\bullet$}&{\color{red}$\bullet$}&{\color{red}$\bullet$}&{\color{red}$\bullet$}&{\color{red}$\dots$}\\
$r=1$ &$-3$&$-1$&$1$&$3$&$5$&$7$&$9$&$11$&$13$&$15$&$17$&$19$&$\dots$ \\
&{\color{blue}$\circ$}&{\color{blue}$\circ$}&{\color{blue}$\circ$}&{\color{blue}$\circ$}&{\color{blue}$\square$}&{\color{blue}$\circ$}&{\color{blue}$\times$}&{\color{blue}$\square$}&{\color{blue}$\times$}&{\color{blue}$\bullet$}&{\color{blue}$\times$}&{\color{blue}$\dots$}\\
\end{tabular},
\end{center}
reminiscent of a $d$-abacus.

Finally we just record 
$\Delta^1_0$ and $\Delta^1_1$:

\begin{center}
\begin{tabular}{ccccccccccccccc}
&$-2$&$-1$&$0$&$1$&$2$&$3$&$4$&$5$&$6$&$7$&$8$&$9$&10&$\dots$ \\
{\color{red}$\Delta^1_0$}&{\color{red}$\square$}&{\color{red}$\circ$}&{\color{red}$\times$}&{\color{red}$\square$}&{\color{red}$\times$}&{\color{red}$\bullet$}&{\color{red}$\times$}&{\color{red}$\bullet$}&{\color{red}$\bullet$}&{\color{red}$\bullet$}&{\color{red}$\bullet$}&{\color{red}$\dots$}\\
{\color{blue}$\Delta^1_1$}&{\color{blue}$\circ$}&{\color{blue}$\circ$}&{\color{blue}$\circ$}&{\color{blue}$\circ$}&{\color{blue}$\square$}&{\color{blue}$\circ$}&{\color{blue}$\times$}&{\color{blue}$\square$}&{\color{blue}$\times$}&{\color{blue}$\bullet$}&{\color{blue}$\times$}&{\color{blue}$\dots$}\\
\end{tabular}.
\end{center}

To restore $\Delta^1$ one should multiply both $\Delta^1_0$ and $\Delta^1_1$ by two, add one to $\Delta^1_1,$ and merge them together.
In other words, $\Delta^1 = 2 \Delta^1_0 \cup (1+ 2 \Delta^1_1)$. Similarly, for $\Delta^2$ one gets

\begin{center}
\begin{tabular}{ccccccccccccccc}
&$-2$&$-1$&$0$&$1$&$2$&$3$&$4$&$5$&$6$&$7$&$8$&$9$&$10$&$\dots$ \\
{\color{red}$\Delta^2_0$}&{\color{red}$\square$}&{\color{red}$\circ$}&{\color{red}$\times$}&{\color{red}$\square$}&{\color{red}$\times$}&{\color{red}$\bullet$}&{\color{red}$\times$}&{\color{red}$\bullet$}&{\color{red}$\bullet$}&{\color{red}$\bullet$}&{\color{red}$\bullet$}&{\color{red}$\bullet$}&{\color{red}$\dots$}\\
{\color{blue}$\Delta^2_1$}&{\color{blue}$\circ$}&{\color{blue}$\circ$}&{\color{blue}$\circ$}&{\color{blue}$\circ$}&{\color{blue}$\circ$}&{\color{blue}$\square$}&{\color{blue}$\circ$}&{\color{blue}$\times$}&{\color{blue}$\square$}&{\color{blue}$\times$}&{\color{blue}$\bullet$}&{\color{blue}$\times$}&{\color{blue}$\dots$}\\
\end{tabular}.
\end{center}

Note that the sequences of $N$-generators and $M$-cogenerators are the same for $\Delta^1$ and $\Delta^2,$ even if we take into account the remainder modulo $2.$ In both cases one gets

\begin{gather} \label{ex-equiv-skeleton}
\begin{tabular}{cccccccccc}
{\color{red}$\square$}&{\color{red}$\times$}&{\color{red}$\square$}&{\color{red}$\times$}&{\color{blue}$\square$}&{\color{red}$\times$}&{\color{blue}$\times$}&{\color{blue}$\square$}&{\color{blue}$\times$}&{\color{blue}$\times$}
\end{tabular}
\end{gather}

\noindent
where {\color{red} red} is for even generators and cogenerators
($r=0$), and {\color{blue} blue} is for odd ($r=1$). This is the reason
$\Delta^1 \sim \Delta^2$.
If we only knew the even and odd parts, then,
 in this example, the odd part can be shifted by $1$ with respect to the even part without changing the sequence or parity of
generators and cogenerators. Note that one cannot shift further: in $\Delta^1$ one cannot shift the odd part to the left, and in $\Delta^2$ one cannot shift the odd part to the right and still yield an
invariant set equivalent to $\Delta^1$. 
Also note that while {\color{red}$\Delta^1_0$} $ =$
{\color{red}$\Delta^2_0$},
{\color{blue}$\Delta^1_1$} $=  -1+ ${\color{blue}$\Delta^2_1$}.
  
\end{example}

Let us give a formal definition of the equivalence classes. 

\begin{definition}
\label{def:skeleton}
The {\it skeleton} of an $(N,M)$-invariant subset $\Delta$ is the set 
consisting of its $N$-generators and $M$-cogenerators.
\end{definition}

\begin{example}
The skeleton of  $\Delta^1$ from Example \ref{ex-equiv}
above is $\{-4,0,2,4,5,8,9,11,13,17\}.$
Note is has $10 = 6+4$ elements.

\end{example}

Note that one can uniquely reconstruct an invariant subset $\Delta$ from its skeleton. Indeed, the skeleton contains all the $N$-generators of $\Delta,$ and to distinguish the $N$-generators from the $M$-cogenerators one should simply choose the biggest elements in each congruence class $\bmod\ N.$

An attentive reader may have noticed that the above definition of the
skeleton are not obviously symmetric in $N$ and $M$. It fact, it is
(almost) symmetric by the following lemma.

\begin{lemma}
\label{lem: skeleton}
Let $\Delta$ be some $(N,M)$-invariant subset. An integer $x$ is either an $N$-generator or an $M$-cogenerator of $\Delta$ if
and only if $x+M$ is an $(N+M)$-generator of $\Delta$. 
\end{lemma}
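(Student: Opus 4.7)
The plan is to unpack each of the three conditions appearing in the lemma as a pair of simple membership statements in $\Delta$, and then check the biconditional by a trivial case split.

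First I would observe that $\Delta$ is automatically $(N+M)$-invariant: since $\Delta+N\subset \Delta$ and $\Delta+M\subset\Delta$, one has $\Delta+(N+M)\subset \Delta+M\subset \Delta$. Consequently the set $\Delta\setminus(\Delta+(N+M))$ of $(N+M)$-generators is well defined, and the condition ``$x+M$ is an $(N+M)$-generator of $\Delta$'' is equivalent to the conjunction
\[
x+M\in\Delta\quad\text{and}\quad x-N\notin\Delta.
\]
Similarly, ``$x$ is an $N$-generator'' unpacks as $x\in\Delta$ and $x-N\notin\Delta$, while ``$x$ is an $M$-cogenerator'' unpacks as $x+M\in\Delta$ and $x\notin\Delta$. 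Note that these two possibilities are automatically mutually exclusive, since every $N$-generator lies in $\Delta$ and no $M$-cogenerator does.

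For the forward direction I would handle the two cases separately. If $x$ is an $N$-generator, then $x+M\in\Delta$ by $M$-invariance, while $x-N\notin\Delta$ is part of the hypothesis. If instead $x$ is an $M$-cogenerator, then $x+M\in\Delta$ is immediate, and $x-N\notin \Delta$ follows because $x-N\in\Delta$ would imply $x=(x-N)+N\in\Delta$ by $N$-invariance, contradicting $x\notin\Delta$. Either way the characterization of $(N+M)$-generators applied to $x+M$ is satisfied.

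For the converse, suppose $x+M\in \Delta$ and $x-N\notin \Delta$, and split according to whether or not $x\in\Delta$. If $x\in\Delta$ then the pair $(x\in\Delta,\ x-N\notin\Delta)$ says $x$ is an $N$-generator; if $x\notin\Delta$ then the pair $(x+M\in\Delta,\ x\notin\Delta)$ says $x$ is an $M$-cogenerator. This exhausts the cases. I do not anticipate any real obstacle since the argument is pure definition-chasing; the only mildly notable point is that the correct shift is by $+M$ (so that $(N+M)$-generation aligns with both $N$-generation of an element of $\Delta$ and $M$-cogeneration of an element outside $\Delta$), which is precisely the asymmetry that makes the biconditional go through.
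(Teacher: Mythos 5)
Your proposal is correct and follows essentially the same definition-chasing argument as the paper: both unpack the membership conditions, verify the forward direction by splitting on whether $x$ is an $N$-generator or an $M$-cogenerator, and verify the converse by splitting on whether $x\in\Delta$. The only difference is that you explicitly note $\Delta$ is $(N+M)$-invariant, a small point the paper leaves implicit.
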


\begin{proof}
Indeed, $x+M$ is an $(N+M)$-generator of $\Delta$ if and only if $x+M\in \Delta$ and $x-N\notin \Delta$.

If $x$ is an $N$-generator then $x\in \Delta$, so $x+M\in \Delta$, but $x-N\notin \Delta$. Hence by the above it is an $(N+M)$-generator.
Assume that $x$ is an $M$-cogenerator. Then $x\notin \Delta$ but $x+M\in \Delta$. If $x-N\in \Delta$ then $x\in \Delta$, contradiction, therefore $x-N\notin \Delta$, and again $x+M$ is an $(N+M)$-generator.

Conversely, assume that $x-N = x+M - (N+M) \notin \Delta$ and $x+M\in \Delta$. If $x\in \Delta$ then $x$ is an $N$-generator,  and if $x\notin \Delta$ then $x$ is a $M$-cogenerator.
\end{proof}

\begin{remark}
One can also prove Lemma \ref{lem: skeleton} using generating functions. Let $f(t)=\sum_{s\in \Delta}t^s$ be the generating function for $\Delta$, then the generating function for the set of $N$-generators equals $(1-t^N)f(t)$ while the generating function for the set of $M$-generators equals $(t^{-M}-1)f(t)$. Therefore the generating function for the skeleton equals:
$$
(1-t^N)f(t)+(t^{-M}-1)f(t)=(t^{-M}-t^N)f(t)=t^{-M}(1-t^{M+N})f(t).
$$
\end{remark}

\begin{corollary}
Let $\Delta \in \MNM$. Then
$x$ is in the $(N,M)$-skeleton of $\Delta$ if and only if $x-N+M$ is in the $(M,N)$--skeleton of $\Delta$.
\end{corollary}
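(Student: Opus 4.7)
The plan is to invoke Lemma \ref{lem: skeleton} twice, once as stated and once with the roles of $N$ and $M$ interchanged. The key observation is that the hypothesis of Lemma \ref{lem: skeleton}---namely that $\Delta$ is $(N,M)$-invariant---is symmetric in $N$ and $M$, so the lemma applies equally to an integer $y$ being in the $(M,N)$-skeleton. Thus I would record the two characterizations side by side: $x$ belongs to the $(N,M)$-skeleton of $\Delta$ iff $x+M$ is an $(N+M)$-generator of $\Delta$, and $y$ belongs to the $(M,N)$-skeleton of $\Delta$ iff $y+N$ is an $(N+M)$-generator of $\Delta$.

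Now I would simply align the two conditions by the substitution $y = x - N + M$, so that $y + N = x + M$. This makes the right-hand sides of the two equivalences identical, which gives
\[
x \in S_{N,M}(\Delta) \iff x+M \text{ is an } (N+M)\text{-generator} \iff (x-N+M)+N \text{ is an } (N+M)\text{-generator} \iff x-N+M \in S_{M,N}(\Delta),
\]
where $S_{N,M}(\Delta)$ denotes the $(N,M)$-skeleton. Because this is a one-line consequence of Lemma \ref{lem: skeleton}, there is no real obstacle; the only subtlety to flag is that the notion of ``$(N+M)$-generator'' depends only on $\Delta$ and the sum $N+M$, so swapping $N$ and $M$ in the lemma preserves the target condition and leaves the shift $M-N$ as the only bookkeeping needed to match the two skeletons.
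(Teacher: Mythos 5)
Your proof is correct and is exactly the intended argument: the paper gives no explicit proof of this corollary, treating it as an immediate consequence of Lemma \ref{lem: skeleton}, and your two invocations of that lemma with $N,M$ swapped (noting the symmetry of the $(N,M)$-invariance hypothesis and aligning via $y=x-N+M$ so that $y+N=x+M$) is precisely that immediate consequence.
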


\begin{remark}
Indeed, the distribution of generators and cogenerators in the $(N,M)$ and $(M,N)$ skeletons is different
(say, there are $N$ generators in the former and $M$ in the latter).
One can be obtained from the other by reading the distribution
backwards and swapping $\times$s and $\square$s.
In Example \ref{ex-equiv} the $(6,4)$-skeleton of $\Delta^1$
is $\{-4,2,5,11\}\cup \{0,4,8,9,13,17\}$ whereas its $(4,6)$-skeleton
is $\{-6,-2,2,3,7,11\} \cup \{0,6,9,15\}$, pictured
as 
\begin{gather*}
\begin{tabular}{cccccccccc}
{\color{red}$\square$}&
{\color{red}$\times$}&
{\color{red}$\times$}&
{\color{red}$\square$}&
{\color{blue}$\square$}&
{\color{red}$\times$}&
{\color{blue}$\square$}&
{\color{blue}$\times$}&
{\color{blue}$\square$}&
{\color{blue}$\times$}
\end{tabular},
\end{gather*}
 which one should compare to \eqref{ex-equiv-skeleton}.

\end{remark}

In other words, the  $(N,M)$-skeleton and the $(M,N)$-skeleton of $\Delta$ differ by a shift by $(M-N)$ which does not depend on 
$\Delta$. In particular, all the constructions are symmetric (up to an overall shift) in $M$ and $N$. From now on we will continue to use notation as in Definitions
\ref{def:skeleton} above and \ref{def:equiv} below.

Let $\Delta$ be an $(N,M)$-invariant set, let $S$ be its skeleton, and $S_0,\ldots, S_{d-1}$ be the parts of the skeleton in different remainders modulo $d=gcd(M,N)$ (i.e., $S_i=S\cap (d\Z+i))$. 

\begin{definition}
\label{def:shift}
A shift $S_0,S_1+a_1,\ldots, S_{d-1}+a_{d-1}$ is called {\it acceptable}
(relative to $S$) if there exists a continuous path $\overline{\phi}=(\phi_1,\ldots,\phi_{d-1}):[0,1]\to\mathbb R^{d-1}$ with  $\overline{\phi}(0)=(0,\ldots,0)$ and $\overline{\phi}(1)=(a_1,\ldots,a_{d-1}),$ such that for any $0\le t\le 1$ the sets $S_0,S_1+\phi_1(t),\ldots, S_{d-1}+\phi_{d-1}(t)$ are pairwise disjoint. In other words, we allow $S_1,\ldots,S_{d-1}$ to shift by translations as long as the elements of different $S_i$'s do not intersect.
In this case we will call the tuple $(a_1,\ldots,a_{d-1})$ an 
acceptable shifting of $S$ and an integral shifting when all $a_i \in
\BZ$.
\end{definition}
When $S$ is understood we will lighten notation by not   specifying the shifts are relative to $S$
or the shiftings are of $S$.
In fact, in the rest of this section we will assume everything
is relative to some fixed skeleton $S$ unless stated otherwise.

\begin{definition}
\label{def:equiv}
Let $\Delta, \Delta' \in \MNM$
with skeletons $S=\bigsqcup S_i$, $S'=\bigsqcup S'_i$, respectively. 
 We say that $\Delta$ is equivalent to $\Delta'$ or 
$\Delta \sim \Delta'$ if there exist a permutation $\sigma\in \Sym_{d-1}$ such that $S'_0,S'_{\sigma(1)},\ldots,S'_{\sigma(d-1)}$ is an acceptable shift of $S_0,S_1,\ldots,S_{d-1}.$  
\end{definition}

Equivalence class will always mean $\sim$ equivalence class.

Dealing with equivalence classes is complicated. Instead, we want to choose one representative from each class---a minimal one, as defined below. Our goal is to shift $S_1,\ldots,S_{d-1}$ down as much as possible, so that the parts of the skeleton are ``stuck" on one another. In fact, this will minimize the size of $\ZZ \setminus \Delta$.  Note that not every integral acceptable shift of a skeleton is again a skeleton of some $(N,M)$-invariant subset, because different parts of the skeleton might end up in the same congruence class modulo $d.$ However, it is convenient to consider the set of all acceptable integral shifts. We will show that there always exists a minimal integral acceptable shift and use it as an intermediate step in the construction of the bijection $\CD:\MNM/\mathord\sim \to \YNM.$ 

\begin{lemma}
The acceptability condition
on a shifting $a_1,\ldots,a_{d-1}$
is equivalent to satisfying a system of
linear inequalities
of the form 
$$
a_i-a_j< \widetilde{b}_{ij},
$$
where $\widetilde{b}_{ij}\in \Z_{>0}\cup\infty$ for $0\le i,j < d,$
are fixed  and the condition $a_0=0$. In particular, the set of acceptable shiftings is convex. 
\end{lemma}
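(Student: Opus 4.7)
The plan is to encode the disjointness of the shifted sets as the avoidance of a finite hyperplane arrangement in $\BR^{d-1}$, and then identify the connected component of the origin with the polytope cut out by the desired inequalities. First I would translate the disjointness condition: the shifted sets $S_i+a_i$ and $S_j+a_j$ meet precisely when $a_i-a_j = s_j-s_i$ for some $s_i\in S_i$, $s_j\in S_j$. So, with the convention $a_0=0$, the ``forbidden locus'' in $\BR^{d-1}$ is the arrangement of the finitely many affine hyperplanes $\{a_i-a_j = c\}$ with $c\in S_j-S_i$, where $i\neq j$. (Finiteness uses that the skeleton $S=\bigsqcup S_r$ has exactly $N+M$ elements.) Since $S_r\subseteq d\BZ+r$, every such $c$ is a nonzero integer congruent to $j-i$ modulo $d$; in particular $0\notin S_j-S_i$, so the origin lies in the complement of the arrangement and is a valid starting point.

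For each ordered pair $(i,j)$ with $i\neq j$, I would define $\widetilde b_{ij}$ to be the smallest positive element of $S_j-S_i$, or $+\infty$ if no such element exists; then $\widetilde b_{ij}\in\BZ_{>0}\cup\{\infty\}$, and by construction $(-\widetilde b_{ji},\widetilde b_{ij})$ is the maximal open interval about $0$ disjoint from $S_j-S_i$. For the forward direction, suppose $(a_1,\dots,a_{d-1})$ is acceptable via a continuous path $\overline\phi$. Then $t\mapsto \phi_i(t)-\phi_j(t)$ is a continuous real-valued function on $[0,1]$ that starts at $0$ and never meets the (discrete) set $S_j-S_i$; the intermediate value theorem forces its image to stay in the component $(-\widetilde b_{ji},\widetilde b_{ij})$, so at $t=1$ we get $a_i-a_j<\widetilde b_{ij}$ (and symmetrically $a_j-a_i<\widetilde b_{ji}$).

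For the converse, given $(a_1,\dots,a_{d-1})$ satisfying the inequalities $a_i-a_j<\widetilde b_{ij}$ for all pairs, I would exhibit the straight-line homotopy $\overline\phi(t)=t(a_1,\dots,a_{d-1})$: then $\phi_i(t)-\phi_j(t)=t(a_i-a_j)$ lies in $(-\widetilde b_{ji},\widetilde b_{ij})$ for every $t\in[0,1]$ because both its endpoints $0$ and $a_i-a_j$ lie in this convex interval. Hence no hyperplane of the arrangement is ever crossed along $\overline\phi$, so the straight line witnesses acceptability. The claimed convexity of the set of acceptable shiftings then follows immediately, since it is cut out by the intersection of open half-spaces $\{a_i-a_j<\widetilde b_{ij}\}$ (together with $a_0=0$).

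I do not expect any serious obstacle here; the only point deserving attention is that in the converse the straight-line path must simultaneously avoid \emph{every} forbidden hyperplane, not just those from a single pair. This works because the condition decouples into independent one-dimensional strip conditions, one for each ordered pair $(i,j)$, each of which is automatically preserved by linearity of $t\mapsto t(a_i-a_j)$. Integrality of $\widetilde b_{ij}$ is immediate from $S_j-S_i\subseteq\BZ$, and positivity is built into its definition.
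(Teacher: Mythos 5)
Your proposal is correct and matches the paper's proof in all essentials: you define $\widetilde b_{ij}$ as the minimum positive element of $S_j-S_i$ exactly as in \eqref{def bij}, the forward direction is the same intermediate-value argument (the paper phrases it as a contradiction on a single colliding pair $(x,y)$, you phrase it as confinement to the interval $(-\widetilde b_{ji},\widetilde b_{ij})$), and the converse is the identical straight-line-path calculation. The only cosmetic difference is your packaging via a hyperplane arrangement, which is a useful conceptual gloss but not a different argument.
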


\begin{proof}
Fix a skeleton $S$.
Set 
\begin{equation}
\label{def bij}
\widetilde{b}_{ij}:=\min\limits_{x\in S_i,\ y\in S_j,\ y>x} y-x,
\end{equation}
if $\{x,y:x\in S_i,\ y\in S_j,\ y>x\}\neq\emptyset,$ and $\widetilde{b}_{ij}=\infty$ otherwise. Suppose that $(a_1,\ldots,a_{d-1})$ is an acceptable shifting of $S$. It follows that for any $i,j$ one has $a_i-a_j<\widetilde{b}_{ij}.$ Indeed, assume otherwise, i.e., $a_i-a_j\ge\widetilde{b}_{ij}.$ By definition, there exist $x\in S_i$ and $y\in S_j$ such that $y-x=\widetilde{b}_{ij}>0.$ However, after shifting one has 
$$
(y+a_j)-(x+a_i)=\widetilde{b}_{ij}-(a_i-a_j)\le 0.
$$ 
Therefore, for any continuous path $\overline{\phi}:[0,1]\to \mathbb R^{d-1}$ such that  $\overline{\phi}(0)=(0,\ldots,0)$ and $\overline{\phi}(1)=(a_1,\ldots,a_{d-1}),$ there exists $t\in (0,1]$ such that $y+\phi_i(t)=x+\phi_j(t).$ Contradiction.

Conversely, suppose that $(a_1,\ldots,a_{d-1})$ is such that all the inequalities $a_i-a_j<\widetilde{b}_{ij}$ are satisfied. Take the path $\overline{\phi}$ to be the line segment connecting $(0,\ldots,0)$ and $(a_1,\ldots,a_{d-1}),$ i.e., take
$$
\overline{\phi}(t):=(ta_1,\ldots,ta_{d-1}).
$$
Then for any $t\in [0,1],$ any $1\le i,j\le d-1,\ i\neq j,$ and any $x\in S_i$ and $y\in S_j$ with $y>x$ one has $x+ta_i\neq y+ta_j.$ Indeed,
$$
(y+ta_j)-(x+ta_i)=(y-x)-t(a_i-a_j)>\widetilde{b}_{ij}-t\widetilde{b}_{ij}\ge 0.
$$
(The case $y<x$ is covered similarly by the inequality $a_j-a_i<\widetilde{b}_{ji}.$)
\end{proof}

We are interested in integral acceptable shiftings, so we can set $b_{ij}=\widetilde{b}_{ij}-1,$ and then all integral acceptable shiftings satisfy
\begin{equation}
\label{polytope}
a_i-a_j\le b_{ij}.
\end{equation}
Let $A = A_S \subset \mathbb{R}^{d-1}$ be the set defined by the  inequalities \eqref{polytope}.

\begin{example}\label{example: integral shifts}
Let $(n,m)=(3,2)$ and $d=4.$ Consider the $(12,8)$-invariant subset 
$$
\Delta=\{0,1,5,8,9,12,13,16,17,20,21,24,25,27,28,29,30\}\cup(\Z_{\ge 32}).
$$ 
As in Example \ref{ex-equiv}, it is convenient to reserve a separate row for each remainder modulo $d=4:$ 

\begin{center}
\begin{tabularx}{0.75\textwidth}{XYYYYYYYYYYYYYYYY}
&$-8$&$-4$&$0$&$4$&$8$&$12$&$16$&$20$&$24$&$28$&$32$&$36$&$40$&$44$&$\dots$ \\
{\color{blue}$0$}&{\color{blue}$\square$}&{\color{blue}$\circ$}&{\color{blue}$\times$}&{\color{blue}$\square$}&{\color{blue}$\times$}&{\color{blue}$\bullet$}&{\color{blue}$\times$}&{\color{blue}$\bullet$}&{\color{blue}$\bullet$}&{\color{blue}$\bullet$}&{\color{blue}$\bullet$}&{\color{blue}$\bullet$}&{\color{blue}$\bullet$}&{\color{blue}$\bullet$}&{\color{blue}$\dots$}\\
{\color{mygreen}$1$}&{\color{mygreen}$\square$}&{\color{mygreen}$\square$}&{\color{mygreen}$\times$}&{\color{mygreen}$\times$}&{\color{mygreen}$\times$}&{\color{mygreen}$\bullet$}&{\color{mygreen}$\bullet$}&{\color{mygreen}$\bullet$}&{\color{mygreen}$\bullet$}&{\color{mygreen}$\bullet$}&{\color{mygreen}$\bullet$}&{\color{mygreen}$\bullet$}&{\color{mygreen}$\bullet$}&{\color{mygreen}$\bullet$}&{\color{mygreen}$\dots$}\\
{\color{red}$2$}&{\color{red}$\circ$}&{\color{red}$\circ$}&{\color{red}$\circ$}&{\color{red}$\circ$}&{\color{red}$\circ$}&{\color{red}$\circ$}&{\color{red}$\circ$}&{\color{red}$\square$}&{\color{red}$\square$}&{\color{red}$\times$}&{\color{red}$\times$}&{\color{red}$\times$}&{\color{red}$\bullet$}&{\color{red}$\bullet$}&{\color{red}$\dots$}\\
{\color{orange}$3$}&{\color{orange}$\circ$}&{\color{orange}$\circ$}&{\color{orange}$\circ$}&{\color{orange}$\circ$}&{\color{orange}$\circ$}&{\color{orange}$\circ$}&{\color{orange}$\square$}&{\color{orange}$\circ$}&{\color{orange}$\times$}&{\color{orange}$\square$}&{\color{orange}$\times$}&{\color{orange}$\bullet$}&{\color{orange}$\times$}&{\color{orange}$\bullet$}&{\color{orange}$\dots$}\\
\end{tabularx}
\end{center}

Here each box in the table correspond to the sum of the numbers at the top of the column and at the left and of the row, so to recover the subset $\Delta$ one should read the table column by column, top to bottom, then  left to right. As usual, $\times$ denotes a $12$-generator, and $\square$ an $8$-cogenerator, while $\bullet$ are the other elements of $\Delta$ and $\circ$ the other elements in the complement. The parts $S_0,\ S_1,\ S_2,$ and $S_3$ of the skeleton of $\Delta$ are given by
\begin{gather*}
S_0=\{-8,0,4,8,16\},\
S_1=\{-7,-3,1,5,9\},\
S_2=\{22,26,30,34,38\},
\\
\text{ and  }
S_3=\{19,27,31,35,43\}.
\end{gather*}
We can compute the numbers $b_{ij}=\widetilde{b}_{ij}-1$, $i\neq j$ in this example:
$$
(b_{ij})_{i,j=0}^{3}=
\left(\begin{matrix}
& 0& 5& 2\\
 2& & 12& 9\\
\infty & \infty & & 0\\
\infty & \infty & 2&\\
\end{matrix}\right).
$$
Therefore, the set $A_S\subset \mathbb{R}^3$ is given by
the shiftings $(a_1, a_2, a_3)$ satisfying
$$
-2\le a_0-a_1\le 0,\
-\infty\le a_0-a_2\le 5,\
-\infty\le a_0-a_3\le 2,
$$
$$
-\infty\le a_1-a_2\le 12,\
-\infty\le a_1-a_3\le 9,\
-2\le a_2-a_3\le 0,
$$
where $a_0=0.$ This simplifies to
$$
0\le a_1\le 2,\ a_3\ge -2,\ 0\le a_3-a_2\le 2.
$$
\end{example}

\begin{lemma}\label{Lemma: minimality}
Define
\begin{equation}
\label{def mi}
m_i=\max_{i=i_1,i_2,...,i_k=0} \sum\limits_{\ell=1}^{k-1} (-b_{i_{\ell+1}i_l})
\end{equation}
where the maximum is taken over all sequences $\{i_1,i_2,...,i_k\}$ of integers between $0$ and $d-1,$ such that $i_1=i$ and $i_k=0.$
Then $(m_1,\ldots,m_{d-1})\in A$ and for any $i$ and any integral acceptable shifting $(a_1,\ldots,a_{d-1})\in A\cap\Z^{d-1}$  we have $a_i\ge m_i.$
\end{lemma}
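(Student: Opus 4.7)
I recognize $A$ as the feasibility set of a classical system of difference constraints $a_i - a_j \le b_{ij}$ (with the convention $a_0 := 0$). The coordinatewise-minimal integer solution of such a system is given by shortest-path distances in the weighted digraph on vertices $\{0,1,\ldots,d-1\}$ with an edge $p \to q$ of weight $b_{q,p}$; the quantity $m_i$ is precisely this shortest-path value from $i$ to $0$, expressed as a maximum of sums of $-b$'s. The proof then splits into the two standard parts, plus a short finiteness check.

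\textbf{Step 1: Lower bound.} Given any $a \in A \cap \Z^{d-1}$ and any sequence $i = i_1, i_2, \ldots, i_k = 0$, the defining inequalities yield $a_{i_\ell} - a_{i_{\ell+1}} \ge -b_{i_{\ell+1}, i_\ell}$. Summing over $\ell$ telescopes to
\begin{equation*}
a_i \;=\; a_{i_1} - a_{i_k} \;\ge\; \sum_{\ell=1}^{k-1}\bigl(-b_{i_{\ell+1}, i_\ell}\bigr).
\end{equation*}
Maximizing over all such sequences gives $a_i \ge m_i$.

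\textbf{Step 2: Feasibility of $(m_1, \ldots, m_{d-1})$.} I verify $m_i - m_j \le b_{ij}$ for every $i, j \in \{0,1,\ldots,d-1\}$, with $m_0 := 0$. If $b_{ij} = \infty$ the inequality is vacuous. Otherwise take a sequence $i = i_1, \ldots, i_k = 0$ attaining $m_i$ and prepend $j$, obtaining the sequence $j, i_1, \ldots, i_k = 0$ whose associated sum equals $-b_{ij} + m_i$; by the defining maximality of $m_j$ this forces $m_j \ge -b_{ij} + m_i$, as wanted. The boundary cases reduce to easy observations: $m_i \le 0 \le b_{i0}$ because every term $-b$ is non-positive (all $b$'s are $\ge 0$, since $\widetilde b_{pq} \ge 1$), and $m_j \ge -b_{0,j}$ comes from the one-step sequence $j, 0$.

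\textbf{Finiteness (the only subtle point).} What requires a moment of care is confirming that the max defining $m_i$ is attained by a real number rather than being $-\infty$. The upper bound $m_i \le 0$ is automatic from $b \ge 0$. For a finite lower bound I check that $b_{0,i}$ is always finite: the $0$-normalized set $\Delta_0$ has $-m \in S_0$ as its smallest $m$-cogenerator, while $\Delta_i \subset \Z_{\ge 0}$ is nonempty, so $S_i$ contains the nonnegative element $\min \Delta_i$; the pair $(-m, \min \Delta_i) \in S_0 \times S_i$ witnesses $\widetilde b_{0,i} < \infty$. Hence the length-two sequence $i, 0$ already yields the finite bound $m_i \ge -b_{0,i}$, and since all $b$'s are nonnegative no cycle can improve any partial sum, so the supremum is attained on a simple (in particular, finite) path.
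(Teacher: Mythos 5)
Your proof is correct and follows essentially the same route as the paper: the telescoping sum for the lower bound, and the prepend-$j$-to-an-optimal-sequence argument for feasibility (the paper phrases this contrapositively, but it is the same computation). The finiteness observation is a genuine addition the paper leaves implicit; note, though, that the smallest $M$-cogenerator in $S_0$ is $-M$, not $-m$ (and $-M\equiv 0\pmod d$ since $d\mid M$), or more simply one can use $0\in S_0$ (as $0$ is an $N$-generator of the $0$-normalized $\Delta$) together with the positive $N$-generator in $S_i$ to see $\widetilde b_{0i}<\infty$.
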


\begin{proof}
By definition, there exists a sequence of integers $i=i_1,i_2,\ldots,i_{k-1}, i_k=0$ such that $m_i=\sum\limits_{\ell=1}^{k-1} (-b_{i_{\ell+1}i_\ell}).$ Then one has
$$
a_i=(a_{i_1}-a_{i_2})+(a_{i_2}-a_{i_3})+\ldots+(a_{i_{k-1}}-a_0)\ge -b_{i_2i_1}-\ldots-b_{i_ki_{k-1}}= m_i.
$$
Suppose $(m_1,\ldots,m_{d-1})\not\in A$. Then $m_i-m_j> b_{ij}$ for some $0\le i,j<d.$ By definition, there exists a sequence $i=i_1,i_2,\ldots, i_k=0$ such that $m_i=\sum\limits_{\ell=1}^{k-1} (-b_{i_{\ell+1}i_l}).$ But then 
$$
m_j < m_i-b_{ij}= - b_{ij}+\sum\limits_{\ell=1}^{k-1} (-b_{i_{\ell+1}i_\ell}),
$$
which contradicts the maximality of $m_j$ (consider the sequence $j,i=i_1,i_2,\ldots, i_k=0$).
In other words, $(m_1,\ldots,m_{d-1})$ is the minimal integral 
acceptable shifting.
\end{proof}

Note all $m_i \le 0$ as $(0,\ldots,0) \in A$. Set $M_0=S_0,M_1=S_1+m_1, \ldots, M_{d-1}=S_{d-1}+m_{d-1}$ to be the shifted parts of the skeleton corresponding to the minimal integral acceptable shift relative to $S$. 

\begin{definition}
Let $f(i)$ be the remainder of any element of $M_i$ modulo $d$ (recall that all elements of $M_i$ have the same remainder).  This defines a function $f:\{0,\ldots,d-1\}\to \{0,\ldots,d-1\}.$ 
\end{definition}

For every $0\le i<d$ set $s_i:=\lfloor\frac{M_i}{d}\rfloor=
\{ \lfloor\frac{x}{d}\rfloor \mid x \in M_i \}$
and let $\Delta_i$ be the $(n,m)$-invariant subset such that $s_i$ is the skeleton of $\Delta_i.$ 
Note $\Delta_i $ might not be $0$-normalized. 
\begin{definition}
Let the directed graph (digraph) $G = G_S$ on the vertex set $\{0,\ldots,d-1\}$ be defined in the following way: vertices $i$ and $j$ are connected by an edge $i\to j$ if $f(i)<f(j)$ and the intersection $s_i\cap s_j$ is not empty. 
\end{definition}

\begin{lemma}\label{Lemma: function from orientation}
The value $f(i)$ equals  the length of the longest oriented path from $0$ to $i$ in the digraph $G.$
\end{lemma}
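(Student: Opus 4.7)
My plan is to sandwich the length of the longest oriented path from $0$ to $i$ between $f(i)$ and $f(i)$ by establishing matching upper and lower bounds. The upper bound is immediate from the construction of $G$: every edge $j \to i$ satisfies $f(j) < f(i)$, so along any oriented path $0 = j_0 \to j_1 \to \cdots \to j_k = i$ the $f$-values strictly increase from $0$ to $f(i)$, forcing $k \le f(i)$.

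For the matching lower bound, I would construct a path of length exactly $f(i)$ by tracing backwards from $i$ through tight constraints provided by the minimality of $m_i$. The central subclaim is: for every vertex $i$ with $f(i) > 0$, there exists $k \ne i$ with $f(k) = f(i) - 1$ and edge $k \to i$ in $G$. To prove it, note that by Lemma \ref{Lemma: minimality} the perturbation $(m_1, \ldots, m_i - 1, \ldots, m_{d-1})$ leaves $A$; among the constraints $a_l - a_p \le b_{lp}$, only those of the form $a_l - a_i \le b_{li}$ (with $l \ne i$) become more restrictive when $m_i$ drops, so some such constraint must be tight, say $m_k - m_i = b_{ki}$. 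Unpacking $b_{ki} = \widetilde{b}_{ki} - 1$ and picking witnesses $x \in S_k$, $y \in S_i$ with $y - x = \widetilde{b}_{ki}$ from the definition \eqref{def bij}, the translates $x' := x + m_k \in M_k$ and $y' := y + m_i \in M_i$ satisfy $y' - x' = \widetilde{b}_{ki} - b_{ki} = 1$. Reducing modulo $d$ gives $f(i) \equiv f(k) + 1 \pmod{d}$, which combined with $f(i) > 0$ and $0 \le f(k) \le d-1$ forces $f(k) = f(i) - 1$; because $f(k) \le d - 2$, the consecutive integers $x'$ and $y' = x' + 1$ share the same floor after dividing by $d$, and this common floor belongs to $s_k \cap s_i$, so $k \to i$ is an edge of $G$.

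Iterating the subclaim $f(i)$ times produces a backward chain $i = i_0 \leftarrow i_1 \leftarrow \cdots \leftarrow i_{f(i)} = j^*$ in $G$ with $f(i_r) = f(i) - r$, terminating at some $j^*$ with $f(j^*) = 0$; reversed, this chain is a $G$-path of length $f(i)$ from $j^*$ to $i$. The final step is to verify $j^* = 0$: telescoping the tight constraints along the chain yields $m_{j^*} - m_i = \sum_r b_{i_{r+1}, i_r}$, and comparing this with formula \eqref{def mi} for $m_i$ (whose maximum is attained by tight sequences ending at $0$) should pin $m_{j^*} = 0$; then the identity $f(j^*) \equiv j^* + m_{j^*} \pmod d$ with $j^* \in \{0, \ldots, d-1\}$ immediately gives $j^* = 0$. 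I expect this terminating step — effectively ruling out parasitic vertices $j \ne 0$ with $f(j) = m_j = 0$ at the minimal shift — to be the main technical hurdle, possibly handled by constructing an explicit simultaneous downward perturbation of several $m_l$'s that would violate minimality were such a $j$ to exist.
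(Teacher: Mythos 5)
Your argument is aligned with the paper's strategy: establish the strict $f$-increase along edges for the upper bound, and then, for the lower bound, prove that every vertex $i$ with $f(i)>0$ has a predecessor $k$ with $f(k)=f(i)-1$ and an edge $k\to i$. Your proof of that subclaim is correct and in fact spells out details the paper elides: extracting a tight constraint $m_k-m_i=b_{ki}$ from the failure of the perturbed shifting, producing consecutive integers $x'\in M_k$, $y'=x'+1\in M_i$, deducing $f(k)=f(i)-1$ from $f(i)>0$, and using $f(k)\le d-2$ to conclude $x'$ and $y'$ share a floor, hence $s_k\cap s_i\neq\emptyset$.

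The gap you flag at the terminating step is real, and neither of your proposed repairs closes it. First, telescoping cannot ``pin $m_{j^*}=0$'': extending your chain by appending the vertex $0$ and comparing with \eqref{def mi} yields only $m_{j^*}\ge -b_{0,j^*}$, the trivial singleton-sequence bound. Second, the ``parasitic vertices $j\neq 0$ with $f(j)=m_j=0$'' you want to rule out cannot exist in the first place: since $f(j)\equiv j+m_j\pmod d$, any $j\neq 0$ with $f(j)=0$ automatically has $m_j\not\equiv 0\pmod d$, hence $m_j\neq 0$. What must actually be excluded is any $j^*\neq 0$ with $f(j^*)=0$, with no further hypothesis on $m_{j^*}$. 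The paper's route is short and you already have all its pieces: your subclaim shows the range of $f$ is downward-closed and contains $0$, so if $f^{-1}(0)\supseteq\{0,j^*\}$ has two elements then the range is $\{0,\ldots,r\}$ with $r\le d-2$; but running your own tight-constraint computation at the vertex $j^*$ (minimality of $m_{j^*}$ forces some $m_k-m_{j^*}=b_{k,j^*}$) gives $f(k)\equiv f(j^*)-1\equiv d-1\pmod d$, i.e., $f(k)=d-1$, a contradiction. Any ``simultaneous downward perturbation'' variant would, at the tight-constraint stage, still have to rule out $f(k)=d-1$, so the downward-closure observation is not optional.
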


\begin{proof}
By definition, we have $f(i)<f(j)$ for any edge $i\to j.$ Therefore, it suffices to prove the following two conditions:
\begin{enumerate}
\item If $f(j)>0$ then there exists $i$ such that $f(i)=f(j)-1$ and $G$ contains the edge $i\to j.$
\item $f(i)=0$ implies $i=0.$
\end{enumerate}
Both conditions follow immediately from the minimality of the shift. Indeed, if the first property is not satisfied for a vertex $i$ of $G$ then $(m_1,\ldots,m_i-1,\ldots, m_{d-1})$ is an acceptable shifting, which contradicts Lemma \ref{Lemma: minimality}. 

Suppose now that $f(i)=0,\ i\neq 0.$ This and the first property imply that $f(j)\neq d-1$ for any $j\in\{0,\ldots,d-1\}.$ Therefore, again, $(m_1,\ldots,m_i-1,\ldots, m_{d-1})$ is an acceptable shifting. Contradiction.   
\end{proof}

\begin{corollary}
The function $f$ can be recovered from the orientation of the graph $G.$
\end{corollary}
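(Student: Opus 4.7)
The corollary is an essentially immediate consequence of Lemma \ref{Lemma: function from orientation}. My plan is to simply observe that the lemma provides an explicit recipe: for each vertex $i\in\{0,\ldots,d-1\}$, the value $f(i)$ equals the length of the longest directed path from $0$ to $i$ in $G$. This quantity is a purely graph-theoretic invariant of the oriented graph $G$, and in particular it can be computed knowing only the vertex set and the set of directed edges of $G$, without any reference to the original skeleton $S$ or the minimizing shifts $m_1,\ldots,m_{d-1}$. Thus $f$ is determined by the orientation of $G$.

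To make this completely formal, I would argue as follows. Suppose two skeletons $S$ and $S'$ give rise to the same oriented graph $G=G_S=G_{S'}$ on vertex set $\{0,\ldots,d-1\}$, and let $f,f'$ be the associated residue functions. For every $i$, applying Lemma \ref{Lemma: function from orientation} to both $S$ and $S'$ yields
\[
f(i) = \max\{k-1 : 0=i_1\to i_2\to\cdots\to i_k=i \text{ a directed path in }G\} = f'(i),
\]
since the set of directed paths from $0$ to $i$ is the same in both cases. Hence $f$ is recovered from the orientation of $G$.

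I do not foresee any obstacle here: all the substantive content, namely the two properties (1) every non-source vertex has an in-neighbor of $f$-value one less, and (2) only $0$ has $f$-value $0$, has already been established in the proof of Lemma \ref{Lemma: function from orientation}, and together these ensure that the longest-path statistic on $G$ exactly reproduces $f$. The proof is therefore a one-line deduction and fits in a sentence or two citing the preceding lemma.
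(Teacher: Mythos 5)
Your proposal is correct and matches the paper's (implicit) reasoning: the corollary is stated without proof precisely because Lemma~\ref{Lemma: function from orientation} already expresses $f(i)$ as the longest-directed-path length from $0$ to $i$, a quantity depending only on the oriented graph $G$. Nothing further is required.
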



\begin{example}\label{Example: minimal shift}
Continuing Example \ref{example: integral shifts}, we compute 
$$
m_1=-b_{01}=0,\ m_2=-b_{32}-b_{03}=-4,\ m_3=-b_{03}=-2,
$$
so the \miasing\/ of $S$ is $(0,-4,-3)$.
The minimal integral acceptable shift is then given by
$$
M_0=S_0=\{-8,0,4,8,16\},\
M_1=S_1+0=\{-7,-3,1,5,9\},
$$
$$
M_2=S_2-4=\{18,22,26,30,34\},\
M_3=S_3-2=\{17,25,29,33,41\}.
$$
Note that elements of both $M_1$ and $M_3$ have remainder $1$ modulo $d=4.$ Therefore, this shift does not correspond to any $(12,8)$-invariant subset. 
The skeletons $s_i=\lfloor \frac{M_i}{4}\rfloor$ are given by
$$
s_0=\{-2,0,1,2,4\},\
s_1=\{-2,-1,0,1,2\},
$$
$$
s_2=\{4,5,6,7,8\},\
s_3=\{4,6,7,8,10\},
$$
and we get $f(0)=0,\ f(1)=f(3)=1,$ and $f(2)=2.$
Note $M_i = d s_i + f(i)$. See Figure \ref{Figure-example: graph} for the graph $G.$ We will also consider the $(n,m)$-periodic lattice paths corresponding to $s_0,s_1,s_2,s_3$ (see Figure \ref{Figure: 4 periodic paths}).

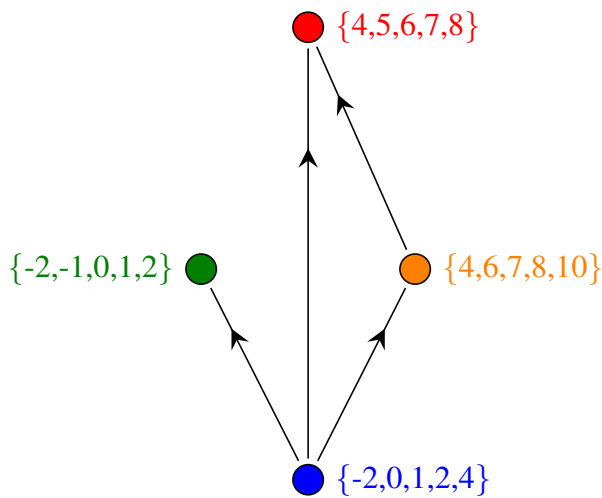
\begin{figure}
\begin{center}
%
%
%
%

\begin{tikzpicture}[
        > = stealth, 
            shorten > = 2pt, 
            shorten < = 2pt, 
            auto,
            semithick 
        ]
\tikzset{myptr/.style={decoration={markings,mark=at position .75 with
    {\arrow[scale=2,>=stealth]{>}}},postaction={decorate}}}
\tikzset{state/.style={circle,draw,minimum size=4mm, inner sep=0pt}, }

    \node[state, fill=red, label=right:{\color{red} \{4,5,6,7,8\}}] (3) {};
    \node[state,fill=orange, label=right: {\color{orange} 
\{4,6,7,8,10\}}] (2) [below right  of=3, xshift=7mm, yshift=-25mm]  {};
    \node[state, fill=mygreen, label=left:{\color{mygreen} 
\{-2,-1,0,1,2\}}] (1) [below left  of=3, xshift=-7mm, yshift=-25mm] {};
    \node[state,fill=blue, label=right:{\color{blue}
\{-2,0,1,2,4\}}] (0) [below of=3, yshift=-50mm] {};

    \draw[myptr] (2) -- (3);
    \draw[myptr] (0) -- (3);
    \draw[myptr] (0) -- (2);
    \draw[myptr] (0) -- (1);

\end{tikzpicture}
\end{center}
\caption{The digraph $G$ with vertices labelled by the skeletons of the $(3,2)$-invariant subsets. The function $f$ corresponds to the levels: $0$ at the blue vertex, $1$ at the green and orange vertices, and $2$ at the red vertex.\label{Figure-example: graph}}
\end{figure}

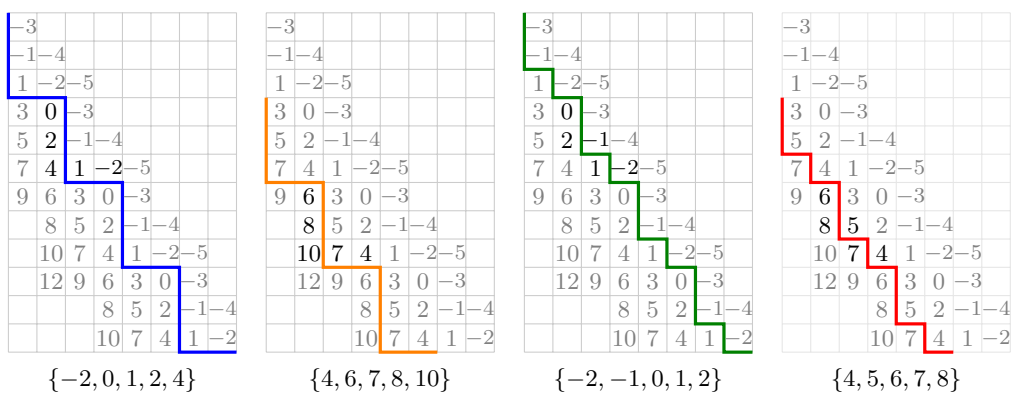
\begin{figure}
\begin{center}
\begin{tikzpicture}[scale=0.375]
\draw [step=1, thin,gray!40!white] (0,0) grid (8,12);


\draw (4,-1) node {\scriptsize{$\{-2,0,1,2,4\}$}};

\draw[blue,very thick] (0,12)--(0,9)--(2,9)--(2,6)--(4,6)--(4,3)--(6,3)--(6,0)--(8,0);

\draw[gray] (.5,11.5) node {\tiny{$-3$}};
\draw[gray] (.5,10.5) node {\tiny{$-1$}};
\draw[gray] (.5,9.5) node {\scriptsize{$1$}};
\draw[gray] (.5,8.5) node {\scriptsize{$3$}};
\draw[gray] (.5,7.5) node {\scriptsize{$5$}};
\draw[gray] (.5,6.5) node {\scriptsize{$7$}};
\draw[gray] (.5,5.5) node {\scriptsize{$9$}};

\draw[gray] (1.5,10.5) node {\tiny{$-4$}};
\draw[gray] (1.5,9.5) node {\tiny{$-2$}};
\draw[black] (1.5,8.5) node {\bf\scriptsize{$0$}};
\draw[black] (1.5,7.5) node {\bf\scriptsize{$2$}};
\draw[black] (1.5,6.5) node {\bf\scriptsize{$4$}};
\draw[gray] (1.5,5.5) node {\scriptsize{$6$}};
\draw[gray] (1.5,4.5) node {\scriptsize{$8$}};
\draw[gray] (1.5,3.5) node {\scriptsize{$10$}};
\draw[gray] (1.5,2.5) node {\scriptsize{$12$}};

\draw[gray] (2.5,9.5) node {\tiny{$-5$}};
\draw[gray] (2.5,8.5) node {\tiny{$-3$}};
\draw[gray] (2.5,7.5) node {\tiny{$-1$}};
\draw[black] (2.5,6.5) node {\bf\scriptsize{$1$}};
\draw[gray] (2.5,5.5) node {\scriptsize{$3$}};
\draw[gray] (2.5,4.5) node {\scriptsize{$5$}};
\draw[gray] (2.5,3.5) node {\scriptsize{$7$}};
\draw[gray] (2.5,2.5) node {\scriptsize{$9$}};

\draw[gray] (3.5,7.5) node {\tiny{$-4$}};
\draw[black] (3.5,6.5) node {\bf\tiny{$-2$}};
\draw[gray] (3.5,5.5) node {\scriptsize{$0$}};
\draw[gray] (3.5,4.5) node {\scriptsize{$2$}};
\draw[gray] (3.5,3.5) node {\scriptsize{$4$}};
\draw[gray] (3.5,2.5) node {\scriptsize{$6$}};
\draw[gray] (3.5,1.5) node {\scriptsize{$8$}};
\draw[gray] (3.5,0.5) node {\scriptsize{$10$}};

\draw[gray] (4.5,6.5) node {\tiny{$-5$}};
\draw[gray] (4.5,5.5) node {\tiny{$-3$}};
\draw[gray] (4.5,4.5) node {\tiny{$-1$}};
\draw[gray] (4.5,3.5) node {\scriptsize{$1$}};
\draw[gray] (4.5,2.5) node {\scriptsize{$3$}};
\draw[gray] (4.5,1.5) node {\scriptsize{$5$}};
\draw[gray] (4.5,.5) node {\scriptsize{$7$}};

\draw[gray] (5.5,4.5) node {\tiny{$-4$}};
\draw[gray] (5.5,3.5) node {\tiny{$-2$}};
\draw[gray] (5.5,2.5) node {\scriptsize{$0$}};
\draw[gray] (5.5,1.5) node {\scriptsize{$2$}};
\draw[gray] (5.5,0.5) node {\scriptsize{$4$}};

\draw[gray] (6.5,3.5) node {\tiny{$-5$}};
\draw[gray] (6.5,2.5) node {\tiny{$-3$}};
\draw[gray] (6.5,1.5) node {\tiny{$-1$}};
\draw[gray] (6.5,.5) node {\scriptsize{$1$}};

\draw[gray] (7.5,1.5) node {\tiny{$-4$}};
\draw[gray] (7.5,0.5) node {\tiny{$-2$}};

\end{tikzpicture}
\begin{tikzpicture}[scale=0.375]
\draw [step=1, gray!40!white] (0,0) grid (8,12);


\draw (4,-1) node {\scriptsize{$\{4,6,7,8,10\}$}};

\draw[orange,very thick] (0,9)--(0,6)--(2,6)--(2,3)--(4,3)--(4,0)--(6,0); 

\draw[gray] (.5,11.5) node {\tiny{$-3$}};
\draw[gray] (.5,10.5) node {\tiny{$-1$}};
\draw[gray] (.5,9.5) node {\scriptsize{$1$}};
\draw[gray] (.5,8.5) node {\scriptsize{$3$}};
\draw[gray] (.5,7.5) node {\scriptsize{$5$}};
\draw[gray] (.5,6.5) node {\scriptsize{$7$}};
\draw[gray] (.5,5.5) node {\scriptsize{$9$}};

\draw[gray] (1.5,10.5) node {\tiny{$-4$}};
\draw[gray] (1.5,9.5) node {\tiny{$-2$}};
\draw[gray] (1.5,8.5) node {\scriptsize{$0$}};
\draw[gray] (1.5,7.5) node {\scriptsize{$2$}};
\draw[gray] (1.5,6.5) node {\scriptsize{$4$}};
\draw[black] (1.5,5.5) node {\bf\scriptsize{$6$}};
\draw[black] (1.5,4.5) node {\bf\scriptsize{$8$}};
\draw[black] (1.5,3.5) node {\bf\scriptsize{$10$}};
\draw[gray] (1.5,2.5) node {\scriptsize{$12$}};

\draw[gray] (2.5,9.5) node {\tiny{$-5$}};
\draw[gray] (2.5,8.5) node {\tiny{$-3$}};
\draw[gray] (2.5,7.5) node {\tiny{$-1$}};
\draw[gray] (2.5,6.5) node {\scriptsize{$1$}};
\draw[gray] (2.5,5.5) node {\scriptsize{$3$}};
\draw[gray] (2.5,4.5) node {\scriptsize{$5$}};
\draw[black] (2.5,3.5) node {\bf\scriptsize{$7$}};
\draw[gray] (2.5,2.5) node {\scriptsize{$9$}};

\draw[gray] (3.5,7.5) node {\tiny{$-4$}};
\draw[gray] (3.5,6.5) node {\tiny{$-2$}};
\draw[gray] (3.5,5.5) node {\scriptsize{$0$}};
\draw[gray] (3.5,4.5) node {\scriptsize{$2$}};
\draw[black] (3.5,3.5) node {\bf\scriptsize{$4$}};
\draw[gray] (3.5,2.5) node {\scriptsize{$6$}};
\draw[gray] (3.5,1.5) node {\scriptsize{$8$}};
\draw[gray] (3.5,0.5) node {\scriptsize{$10$}};

\draw[gray] (4.5,6.5) node {\tiny{$-5$}};
\draw[gray] (4.5,5.5) node {\tiny{$-3$}};
\draw[gray] (4.5,4.5) node {\tiny{$-1$}};
\draw[gray] (4.5,3.5) node {\scriptsize{$1$}};
\draw[gray] (4.5,2.5) node {\scriptsize{$3$}};
\draw[gray] (4.5,1.5) node {\scriptsize{$5$}};
\draw[gray] (4.5,.5) node {\scriptsize{$7$}};

\draw[gray] (5.5,4.5) node {\tiny{$-4$}};
\draw[gray] (5.5,3.5) node {\tiny{$-2$}};
\draw[gray] (5.5,2.5) node {\scriptsize{$0$}};
\draw[gray] (5.5,1.5) node {\scriptsize{$2$}};
\draw[gray] (5.5,0.5) node {\scriptsize{$4$}};

\draw[gray] (6.5,3.5) node {\tiny{$-5$}};
\draw[gray] (6.5,2.5) node {\tiny{$-3$}};
\draw[gray] (6.5,1.5) node {\tiny{$-1$}};
\draw[gray] (6.5,.5) node {\scriptsize{$1$}};

\draw[gray] (7.5,1.5) node {\tiny{$-4$}};
\draw[gray] (7.5,0.5) node {\tiny{$-2$}};

\end{tikzpicture}
\begin{tikzpicture}[scale=0.375]
\draw [step=1, gray!40!white] (0,0) grid (8,12);


\draw (4,-1) node {\scriptsize{$\{-2,-1,0,1,2\}$}};

\draw[mygreen,very thick] (0,12)--(0,10)--(1,10)--(1,9)--(2,9)--(2,7)--(3,7)--(3,6)--(4,6)--(4,6)--(4,4)--(5,4)--(5,3)--(6,3)--(6,1)--(7,1)--(7,0)--(8,0);

\draw[gray] (.5,11.5) node {\tiny{$-3$}};
\draw[gray] (.5,10.5) node {\tiny{$-1$}};
\draw[gray] (.5,9.5) node {\scriptsize{$1$}};
\draw[gray] (.5,8.5) node {\scriptsize{$3$}};
\draw[gray] (.5,7.5) node {\scriptsize{$5$}};
\draw[gray] (.5,6.5) node {\scriptsize{$7$}};
\draw[gray] (.5,5.5) node {\scriptsize{$9$}};

\draw[gray] (1.5,10.5) node {\tiny{$-4$}};
\draw[gray] (1.5,9.5) node {\tiny{$-2$}};
\draw[black] (1.5,8.5) node {\bf\scriptsize{$0$}};
\draw[black] (1.5,7.5) node {\bf\scriptsize{$2$}};
\draw[gray] (1.5,6.5) node {\scriptsize{$4$}};
\draw[gray] (1.5,5.5) node {\scriptsize{$6$}};
\draw[gray] (1.5,4.5) node {\scriptsize{$8$}};
\draw[gray] (1.5,3.5) node {\scriptsize{$10$}};
\draw[gray] (1.5,2.5) node {\scriptsize{$12$}};

\draw[gray] (2.5,9.5) node {\tiny{$-5$}};
\draw[gray] (2.5,8.5) node {\tiny{$-3$}};
\draw[black] (2.5,7.5) node {\bf\tiny{$-1$}};
\draw[black] (2.5,6.5) node {\bf\scriptsize{$1$}};
\draw[gray] (2.5,5.5) node {\scriptsize{$3$}};
\draw[gray] (2.5,4.5) node {\scriptsize{$5$}};
\draw[gray] (2.5,3.5) node {\scriptsize{$7$}};
\draw[gray] (2.5,2.5) node {\scriptsize{$9$}};

\draw[gray] (3.5,7.5) node {\tiny{$-4$}};
\draw[black] (3.5,6.5) node {\bf\tiny{$-2$}};
\draw[gray] (3.5,5.5) node {\scriptsize{$0$}};
\draw[gray] (3.5,4.5) node {\scriptsize{$2$}};
\draw[gray] (3.5,3.5) node {\scriptsize{$4$}};
\draw[gray] (3.5,2.5) node {\scriptsize{$6$}};
\draw[gray] (3.5,1.5) node {\scriptsize{$8$}};
\draw[gray] (3.5,0.5) node {\scriptsize{$10$}};

\draw[gray] (4.5,6.5) node {\tiny{$-5$}};
\draw[gray] (4.5,5.5) node {\tiny{$-3$}};
\draw[gray] (4.5,4.5) node {\tiny{$-1$}};
\draw[gray] (4.5,3.5) node {\scriptsize{$1$}};
\draw[gray] (4.5,2.5) node {\scriptsize{$3$}};
\draw[gray] (4.5,1.5) node {\scriptsize{$5$}};
\draw[gray] (4.5,.5) node {\scriptsize{$7$}};

\draw[gray] (5.5,4.5) node {\tiny{$-4$}};
\draw[gray] (5.5,3.5) node {\tiny{$-2$}};
\draw[gray] (5.5,2.5) node {\scriptsize{$0$}};
\draw[gray] (5.5,1.5) node {\scriptsize{$2$}};
\draw[gray] (5.5,0.5) node {\scriptsize{$4$}};

\draw[gray] (6.5,3.5) node {\tiny{$-5$}};
\draw[gray] (6.5,2.5) node {\tiny{$-3$}};
\draw[gray] (6.5,1.5) node {\tiny{$-1$}};
\draw[gray] (6.5,.5) node {\scriptsize{$1$}};

\draw[gray] (7.5,1.5) node {\tiny{$-4$}};
\draw[gray] (7.5,0.5) node {\tiny{$-2$}};

\end{tikzpicture}
\begin{tikzpicture}[scale=0.375]
\draw [step=1, gray!20!white] (0,0) grid (8,12);


\draw (4,-1) node {\scriptsize{$\{4,5,6,7,8\}$}};

\draw[red,very thick] (0,9)--(0,7)--(1,7)--(1,6)--(2,6)--(2,4)--(3,4)--(3,3)--(4,3)--(4,1)--(5,1)--(5,0)--(6,0);

\draw[gray] (.5,11.5) node {\tiny{$-3$}};
\draw[gray] (.5,10.5) node {\tiny{$-1$}};
\draw[gray] (.5,9.5) node {\scriptsize{$1$}};
\draw[gray] (.5,8.5) node {\scriptsize{$3$}};
\draw[gray] (.5,7.5) node {\scriptsize{$5$}};
\draw[gray] (.5,6.5) node {\scriptsize{$7$}};
\draw[gray] (.5,5.5) node {\scriptsize{$9$}};

\draw[gray] (1.5,10.5) node {\tiny{$-4$}};
\draw[gray] (1.5,9.5) node {\tiny{$-2$}};
\draw[gray] (1.5,8.5) node {\scriptsize{$0$}};
\draw[gray] (1.5,7.5) node {\scriptsize{$2$}};
\draw[gray] (1.5,6.5) node {\scriptsize{$4$}};
\draw[black] (1.5,5.5) node {\bf\scriptsize{$6$}};
\draw[black] (1.5,4.5) node {\bf\scriptsize{$8$}};
\draw[gray] (1.5,3.5) node {\scriptsize{$10$}};
\draw[gray] (1.5,2.5) node {\scriptsize{$12$}};

\draw[gray] (2.5,9.5) node {\tiny{$-5$}};
\draw[gray] (2.5,8.5) node {\tiny{$-3$}};
\draw[gray] (2.5,7.5) node {\tiny{$-1$}};
\draw[gray] (2.5,6.5) node {\scriptsize{$1$}};
\draw[gray] (2.5,5.5) node {\scriptsize{$3$}};
\draw[black] (2.5,4.5) node {\bf\scriptsize{$5$}};
\draw[black] (2.5,3.5) node {\bf\scriptsize{$7$}};
\draw[gray] (2.5,2.5) node {\scriptsize{$9$}};

\draw[gray] (3.5,7.5) node {\tiny{$-4$}};
\draw[gray] (3.5,6.5) node {\tiny{$-2$}};
\draw[gray] (3.5,5.5) node {\scriptsize{$0$}};
\draw[gray] (3.5,4.5) node {\scriptsize{$2$}};
\draw[black] (3.5,3.5) node {\bf\scriptsize{$4$}};
\draw[gray] (3.5,2.5) node {\scriptsize{$6$}};
\draw[gray] (3.5,1.5) node {\scriptsize{$8$}};
\draw[gray] (3.5,0.5) node {\scriptsize{$10$}};

\draw[gray] (4.5,6.5) node {\tiny{$-5$}};
\draw[gray] (4.5,5.5) node {\tiny{$-3$}};
\draw[gray] (4.5,4.5) node {\tiny{$-1$}};
\draw[gray] (4.5,3.5) node {\scriptsize{$1$}};
\draw[gray] (4.5,2.5) node {\scriptsize{$3$}};
\draw[gray] (4.5,1.5) node {\scriptsize{$5$}};
\draw[gray] (4.5,.5) node {\scriptsize{$7$}};

\draw[gray] (5.5,4.5) node {\tiny{$-4$}};
\draw[gray] (5.5,3.5) node {\tiny{$-2$}};
\draw[gray] (5.5,2.5) node {\scriptsize{$0$}};
\draw[gray] (5.5,1.5) node {\scriptsize{$2$}};
\draw[gray] (5.5,0.5) node {\scriptsize{$4$}};

\draw[gray] (6.5,3.5) node {\tiny{$-5$}};
\draw[gray] (6.5,2.5) node {\tiny{$-3$}};
\draw[gray] (6.5,1.5) node {\tiny{$-1$}};
\draw[gray] (6.5,.5) node {\scriptsize{$1$}};

\draw[gray] (7.5,1.5) node {\tiny{$-4$}};
\draw[gray] (7.5,0.5) node {\tiny{$-2$}};

\end{tikzpicture}
\end{center}
\caption{The four $(3,2)$-periodic paths corresponding to the skeletons from Figure \ref{Figure-example: graph}. Note that the elements of the skeletons are exactly the ranks of the boxes above the horizontal steps and to the left of the vertical steps. Equivalently, they are the ranks of the steps of the paths.
\label{Figure: 4 periodic paths}}
\end{figure}

\end{example}

\begin{definition}
Let $T_{n,m}^d$ denote the set of acyclically oriented graphs $G$ on $d$ vertices with a unique source $v_0,$ and vertices labeled by skeletons of $(n,m)$-invariant subsets, such that
\begin{enumerate}
\item All labels are non-negatively normalized, and the label of $v_0$ is zero normalized,
\item Two skeletons intersect if and only if the corresponding vertices are connected by an edge. 
\end{enumerate}
Elements of $T_{n,m}^d$ are considered up to label preserving isomorphisms.
\end{definition}

Note that the underlying (not oriented) graph is determined by the $d$-tuple of $(n,m)$-invariant subsets. We will refer to the orientation of the digraph $G$ as the {\em gluing data} on the $d$-tuple of invariant subsets.

The construction above provides a map 
$$A:\MNM/\mathord\sim \to T_{n,m}^d.$$
Moreover, the map is injective by construction, because given a labeled graph $G\in T_{n,m}^d$ one can use Lemma \ref{Lemma: function from orientation} to reconstruct the function $f(i)$ and then recover the sets $M_0,\ldots, M_{d-1}$ by setting $M_i=ds_i+f(i).$ We need to show that this map is also surjective, i.e., show that for any labeled digraph $G\in T_{n,m}^d$ the corresponding sets $M_0,\ldots, M_{d-1}$ form a minimal integral acceptable shift of the skeleton of an $(N,M)$-invariant subset.

Let $G\in T_{n,m}^d$ be a labeled graph. Let $f$ be the function on the vertex set of $G$ constructed as in Lemma \ref{Lemma: function from orientation}, i.e., for a vertex $v,$ $f(v)$ equals to the length of the longest oriented path from the source $v_0$ to $v.$ Since elements of $T_{n,m}^d$ are considered up to label preserving isomorphisms, one can assume that the vertex set of $G$ is $V_G=\{0,1,\ldots, d-1\}$ and the function $f$ is weakly monotone:
$$
i<j \Rightarrow f(i)\le f(j).
$$
In particular, one gets $v_0=0.$ Note, that two different labeled digraphs on the vertex set $\{0,1,\ldots,d-1\}$ might be related by a label preserving isomorphism, in which case they correspond to the same element of $T_{n,m}^d$ (see Figure \ref{Figure: two graphs} for an example).   

Let the vertices $0,\ldots, d-1$ of $G$
have labels $s_0,\ldots, s_{d-1}$ respectively.


\begin{lemma}\label{Lemma: representative}
Given $G \in T_{n,m}^d$,
there exists an $(N,M)$-invariant subset $\Delta = \Delta^G$ with skeleton $S=\bigsqcup S_i,$ such that
$$
M_0:=ds_0,M_1:=ds_1+f(1),\ldots,M_{d-1}:=ds_{d-1}+f(d-1)
$$ 
is the minimal integral acceptable shift of $S_0,\ldots,S_{d-1}.$ More over, we can choose $\Delta$ by setting
$$
S_0:=ds_0,\ S_1:=ds_1+1,\ldots,S_{d-1}:=ds_{d-1}+(d-1).
$$ 
\end{lemma}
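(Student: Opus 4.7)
The plan is to construct $\Delta$ explicitly from the data, compute its skeleton, and then verify both acceptability and minimality of the shift $m_i=f(i)-i$.

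Since a skeleton uniquely determines an $(n,m)$-invariant subset, let $\Delta_i$ be the subset with skeleton $s_i$, and define
$$\Delta := \bigsqcup_{i=0}^{d-1}(d\Delta_i + i).$$
$(N,M)$-invariance is immediate since $N=dn$ and $M=dm$ preserve residue classes modulo $d$ and each $\Delta_i$ is $(n,m)$-invariant. The non-negative normalization of the labels $s_i$ gives $\Delta\subseteq\Z_{\ge 0}$, while $\Delta_0$ being $0$-normalized ensures $0\in\Delta$, so $\Delta\in\MNM$. Because translation by $\pm N$ and $-M$ preserves residue classes modulo $d$, an element $x = d\sigma+i$ is an $N$-generator (respectively, $M$-cogenerator) of $\Delta$ if and only if $\sigma$ is an $n$-generator (respectively, $m$-cogenerator) of $\Delta_i$, so the part of the skeleton of $\Delta$ in residue class $i$ is exactly $S_i = ds_i+i$, as asserted.

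For acceptability of the shift $(m_1,\ldots,m_{d-1})=(f(1)-1,\ldots,f(d-1)-(d-1))$, I would verify the inequalities $m_i-m_j\le b_{ij}$ from \eqref{polytope}. Writing $x=d\sigma+i\in S_i$ and $y=d\sigma'+j\in S_j$ with $y>x$, a direct computation reduces the required inequality to
$$f(i)-f(j)+1 \le d(\sigma'-\sigma).$$
When $\sigma'>\sigma$ this is automatic since $d(\sigma'-\sigma)\ge d$ while $|f(i)-f(j)|\le d-1$. The case $\sigma'<\sigma$ is vacuous, as then $y>x$ would force $|i-j|\ge d$. The delicate boundary case is $\sigma'=\sigma$: then $y>x$ forces $j>i$, and $\sigma\in s_i\cap s_j$ produces an edge in $G$ by condition (2) in the definition of $T_{n,m}^d$; weak monotonicity $i<j\Rightarrow f(i)\le f(j)$ combined with $f(i)\ne f(j)$ along an edge then yields the strict inequality $f(i)<f(j)$.

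For minimality, I would apply the max formula \eqref{def mi} of Lemma \ref{Lemma: minimality} together with Lemma \ref{Lemma: function from orientation}. The latter provides a longest oriented path $0=j_0\to j_1\to\cdots\to j_{f(i)}=i$ in $G$ realizing $f(i)$. Reversing and setting $i_\ell := j_{f(i)-\ell+1}$, each consecutive pair $i_{\ell+1}\to i_\ell$ is an edge, so one may pick $\sigma\in s_{i_{\ell+1}}\cap s_{i_\ell}$; weak monotonicity forces $i_{\ell+1}<i_\ell$, and a brief check on $y-x=d(\sigma'-\sigma)+i_\ell-i_{\ell+1}$ shows $\tilde b_{i_{\ell+1}i_\ell}=i_\ell-i_{\ell+1}$, hence $-b_{i_{\ell+1}i_\ell}=i_{\ell+1}-i_\ell+1$. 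Telescoping over the $f(i)$ edges,
$$\sum_{\ell=1}^{f(i)}(i_{\ell+1}-i_\ell+1) \;=\; (i_{f(i)+1}-i_1)+f(i) \;=\; f(i)-i,$$
so the max in \eqref{def mi} is at least $f(i)-i$; combined with the upper bound from acceptability, this pins down $m_i=f(i)-i$ and hence $M_i = ds_i+f(i)$, as required. The main subtle step is the boundary case $\sigma'=\sigma$ in acceptability, where the graph's edge structure must be combined with the weak monotonicity of $f$ to produce the strict inequality $f(i)<f(j)$.
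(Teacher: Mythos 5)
Your proof is correct and follows essentially the same route as the paper: construct $\Delta$ by interleaving the $d\Delta_i+i$, identify $S_i=ds_i+i$, compute $\widetilde{b}_{ij}=j-i$ for edges $i\to j$ and bound $\widetilde{b}_{ij}$ from below for non-edges, and realize $m_i$ via the longest directed path in $G$ from $0$ to $i$. The one place where you diverge slightly is in establishing the upper bound $m_i\le f(i)-i$: the paper argues directly that the maximum in \eqref{def mi} is attained on the longest directed path (a claim whose full justification is left a bit implicit), whereas you first verify the system \eqref{polytope} for the candidate shift $(f(1)-1,\ldots,f(d-1)-(d-1))$ and then invoke Lemma \ref{Lemma: minimality} to get $f(i)-i\ge m_i$; this is a cleaner way to pin down the upper bound and avoids reasoning about arbitrary sequences in \eqref{def mi}. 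One small presentational point: in the context of this lemma $f$ is defined to be the longest-path function on $G$ by construction (in the paragraph following Lemma \ref{Lemma: function from orientation}), so citing that lemma to produce the longest path is a slight misattribution, though the intended step is clear and correct.
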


\begin{proof}
By construction, for every $i$ every element of $S_i$ has remainder $i$ modulo $d.$ It follows that $S:=\bigsqcup S_i$ is the skeleton of an $(N,M)$-invariant set $\Delta.$ The non-negative normalization of the $s_i$ imply $\Delta \in \MNM$.  It remains to show that $M_0,M_1,\ldots,M_{d-1}$ is the minimal integral acceptable shift of $S_0,S_1,\ldots,S_{d-1}.$ By construction, for every $i$ we have $M_i=S_i+a_i$ where $a_i:=f(i)-i.$ Recall that the minimal integral acceptable shifting is given by \eqref{def mi},
and by \eqref{def bij} the integers $b_{ij}$ are given by 
$$
b_{ij}:=\left(\min\limits_{x\in S_i,\ y\in S_j,\ y>x} y-x\right)-1.
$$ 

Suppose
$i \to j$
is an edge of $G.$ Then as $f$ is monotone we must have $i<j$. Since $s_i\cap s_j\neq\emptyset$ it follows that there are $x\in S_i$ and $y\in S_j$ such that $\lfloor\frac{x}{d}\rfloor=\lfloor\frac{y}{d}\rfloor,$ so we get $b_{ij}=(j-i)-1.$
On the other hand,
suppose there is no directed edge $i \to j$. If $i<j$
we immediately get that $b_{ij}\ge (d+j-i)-1 \ge j.$ Similarly, if $i>j$
(even if $j \to i$) we get $b_{ij}\ge (j+d-i)-1\ge j.$
It follows that to maximize $\sum\limits_{\ell=1}^{k-1}( -b_{i_{\ell+1}i_\ell})$ in \eqref{def mi} 
one has to consider the longest directed path in $G$ from $0$ to $i$. Such a path has $f(i)$ steps, so we get 
$$
m_i=f(i)-i=a_i.
$$
Therefore, $M_0,M_1,\ldots, M_{d-1}$ is the minimal integral acceptable shift of $S_0,S_1,\ldots, S_{d-1}.$
\end{proof}

Note that the representative constructed in Lemma \ref{Lemma: representative} above has the following property: $\left\lfloor\frac{S_i}{d}\right\rfloor=\left\lfloor\frac{M_i}{d}\right\rfloor$ where, as before, $S_0,\ldots, S_{d-1}$ are the parts of the skeleton of the representative $\Delta$ of the equivalence class in the corresponding remainders modulo $d,$ and $M_0,\ldots, M_{d-1}$ is the minimal integral acceptable shift of $S_0,\ldots, S_{d-1}.$ We call such representatives $\Delta$ {\it the minimal representatives}. Note that an equivalence class might contain more than one minimal representative (see Example \ref{Example: minimal reps}).
Recall the sets $M_0,\ldots,M_{d-1}$ might not correspond to an element of $\MNM$, but the $S_0,\ldots,S_{d-1}$ will.

\begin{figure}
\begin{center}
  \begin{tikzpicture}[scale=0.4,
        > = stealth, 
            shorten > = 2pt, 
            shorten < = 2pt, 
            auto,
            semithick 
        ]
\tikzset{myptr/.style={decoration={markings,mark=at position .75 with 
    {\arrow[scale=2,>=stealth]{>}}},postaction={decorate}}}
\tikzset{state/.style={circle,draw,minimum size=6mm, inner sep=0pt}, }

   \begin{scope}[xshift=-8cm]
    \node[state,label=right:{\scriptsize \{4,5,6,7,8\}}] (3) {$3$};
    \node[state,label=right: {\scriptsize \{4,6,7,8,10\}}] (2)
        [below right  of=3, xshift=7mm, yshift=-25mm]  {$1$};
    \node[state,label=left:{\scriptsize \{-2,-1,0,1,2\}}] (1)
        [below left  of=3, xshift=-7mm, yshift=-25mm] {$2$};
    \node[state,label=right:{\scriptsize \{-2,0,1,2,4\}}] (0)
        [below of=3, yshift=-50mm] {$0$};

    \draw[myptr] (2) -- (3);
    \draw[myptr] (0) -- (3);
    \draw[myptr] (0) -- (2);
    \draw[myptr] (0) -- (1);
   \end{scope}

   \begin{scope}[xshift=12cm]
    \node[state,label=right:{\scriptsize \{4,5,6,7,8\}}] (3) {$3$};
    \node[state,label=left: {\scriptsize \{4,6,7,8,10\}}] (2) [below left of=3,
        xshift=-7mm, yshift=-25mm]  {$2$};
    \node[state,label=right:{\scriptsize \{-2,-1,0,1,2\}}] (1) [below right of=3,
        xshift=7mm, yshift=-25mm] {$1$};
    \node[state,label=right:{\scriptsize \{-2,0,1,2,4\}}] (0) [below of=3,
        yshift=-50mm] {$0$};

    \draw[myptr] (2) -- (3);
    \draw[myptr] (0) -- (3);
    \draw[myptr] (0) -- (2);
    \draw[myptr] (0) -- (1);
   \end{scope}

\end{tikzpicture}

\end{center}
\caption{Two graphs corresponding to the same point of $T_{3,2}^4:$ the isomorhism switching vertices $1$ and $2$ preserves the labels. \label{Figure: two graphs}}
\end{figure}
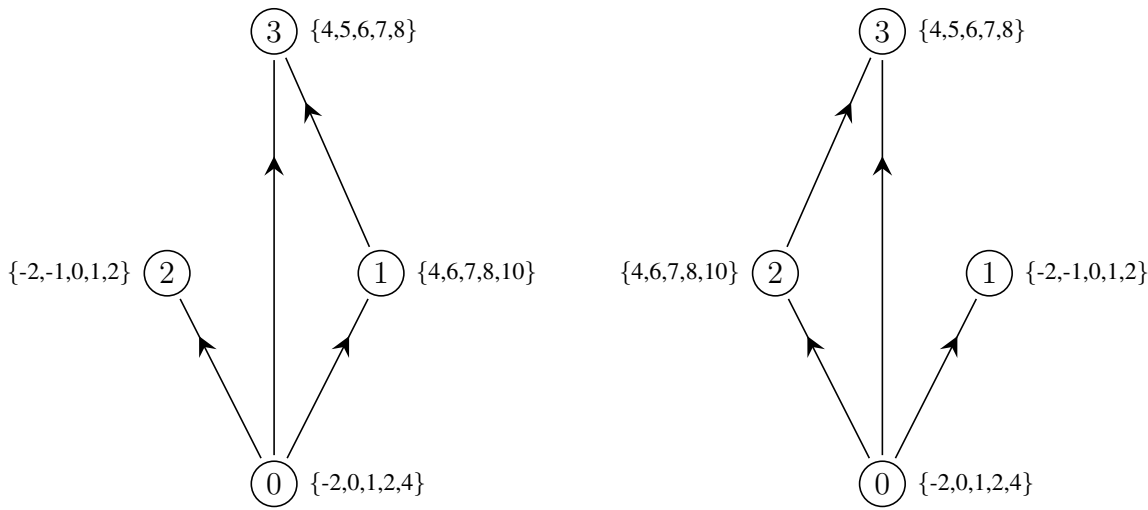

\begin{example}\label{Example: minimal reps}
Continuing Example \ref{Example: minimal shift} and using the graph on the left of the Figure \ref{Figure: two graphs}, one gets
\begin{gather*}
S_0=4\{-2,0,1,2,4\}=\{-8,0,4,8,16\}, \\
S_1=4\{4,6,7,8,10\}+1=\{17,25,29,33,41\}, \\
S_2=4\{-2,-1,0,1,2\}+2=\{-6,-2,0,6,10\}, \\
S_3=4\{4,5,6,7,8\}+3=\{19,23,27,31,35\}.
\end{gather*}
Therefore, the $(12,8)$-invariant subset $\Delta^G$ we constructed
is depicted below. 
\begin{center}
\begin{tabularx}{0.75\textwidth}{XYYYYYYYYYYYYYYYY}
&$-8$&$-4$&$0$&$4$&$8$&$12$&$16$&$20$&$24$&$28$&$32$&$36$&$40$&$44$&$\dots$ \\
{\color{blue}$0$}&{\color{blue}$\square$}&{\color{blue}$\circ$}&{\color{blue}$\times$}&{\color{blue}$\square$}&{\color{blue}$\times$}&{\color{blue}$\bullet$}&{\color{blue}$\times$}&{\color{blue}$\bullet$}&{\color{blue}$\bullet$}&{\color{blue}$\bullet$}&{\color{blue}$\bullet$}&{\color{blue}$\bullet$}&{\color{blue}$\bullet$}&{\color{blue}$\bullet$}&{\color{blue}$\dots$}\\
{\color{red}$1$}&{\color{red}$\circ$}&{\color{red}$\circ$}&{\color{red}$\circ$}&{\color{red}$\circ$}&{\color{red}$\circ$}&{\color{red}$\circ$}&{\color{red}$\square$}&{\color{red}$\circ$}&{\color{red}$\times$}&{\color{red}$\square$}&{\color{red}$\times$}&{\color{red}$\bullet$}&{\color{red}$\times$}&{\color{red}$\bullet$}&{\color{red}$\dots$}\\
{\color{mygreen}$2$}&{\color{mygreen}$\square$}&{\color{mygreen}$\square$}&{\color{mygreen}$\times$}&{\color{mygreen}$\times$}&{\color{mygreen}$\times$}&{\color{mygreen}$\bullet$}&{\color{mygreen}$\bullet$}&{\color{mygreen}$\bullet$}&{\color{mygreen}$\bullet$}&{\color{mygreen}$\bullet$}&{\color{mygreen}$\bullet$}&{\color{mygreen}$\bullet$}&{\color{mygreen}$\bullet$}&{\color{mygreen}$\bullet$}&{\color{mygreen}$\dots$}\\
{\color{orange}$3$}&{\color{orange}$\circ$}&{\color{orange}$\circ$}&{\color{orange}$\circ$}&{\color{orange}$\circ$}&{\color{orange}$\circ$}&{\color{orange}$\circ$}&{\color{orange}$\square$}&{\color{orange}$\square$}&{\color{orange}$\times$}&{\color{orange}$\times$}&{\color{orange}$\times$}&{\color{orange}$\bullet$}&{\color{orange}$\bullet$}&{\color{orange}$\bullet$}&{\color{orange}$\dots$}\\
\end{tabularx}
\end{center}
Note that if we had used the graph on the right of Figure \ref{Figure: two graphs} instead, we would get a different invariant subset in the same equivalence class. Both these subsets are minimal, as they are constructed according to the algorithm in Lemma \ref{Lemma: representative}. Note that they are both different from the invariant subset we started from in Example \ref{example: integral shifts} , which was not minimal. 

Also note this set has $14$ gaps, which is the area of
the rational Dyck path constructed in Figure \ref{Figure: gluing}.
\end{example}

\subsection{From equivalence classes to Dyck paths}
\label{sec-Dyck}

The last step is to construct a bijection $B:T_{n,m}^d\to \YNM$ so that we can set $\CD=B\circ A.$ Let $G\in T_{n,m}^d$ be a labeled graph, and let $P_0,\ldots, P_{d-1}$ be the $(m,n)$-periodic lattice paths corresponding to the labels $s_0,\ldots, s_{d-1}$ of $G.$ 

\begin{lemma}\label{Lemma: intersections}
Let $0\le i<j<d.$ Then the paths $P_i$ and $P_j$ intersect if and only if the skeletons $s_i$ and $s_j$ intersect or, equivalently, if and only if the graph $G$ has an edge between the corresponding vertices.
\end{lemma}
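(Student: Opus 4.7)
The plan is first to characterize when a lattice point lies on the periodic path $P_i$ directly in terms of $\Delta_i$, and then to translate between common lattice points on the two paths and common elements of their skeletons. To begin, I would verify that $(x,y)\in P_i$ if and only if $\rank(x,y)\notin\Delta_i$ while $\rank(x,y)+n+m\in\Delta_i$. This comes from examining the four boxes meeting at $(x,y)$, whose ranks are $r,\ r+n,\ r+m,\ r+n+m$ for $r=\rank(x,y)$: the path separates boxes of rank in $\Delta_i$ from the rest, so $(x,y)$ lies on $P_i$ exactly when these four boxes are neither all inside nor all outside $\Delta_i$, which by $(n,m)$-invariance of $\Delta_i$ reduces to the smallest-rank (NE) box being outside and the largest-rank (SW) box being inside.

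For the implication $P_i\cap P_j\ne\emptyset\Rightarrow s_i\cap s_j\ne\emptyset$, I would take any $(x,y)\in P_i\cap P_j$ and argue that $r+n$ lies in both skeletons, where $r=\rank(x,y)$. Applied to $\Delta_i$: if $r+n\in\Delta_i$, then $(r+n)-n=r\notin\Delta_i$ makes $r+n$ an $n$-generator of $\Delta_i$; if instead $r+n\notin\Delta_i$, then $(r+n)+m=r+n+m\in\Delta_i$ makes it an $m$-cogenerator. Either way $r+n\in s_i$, and the identical dichotomy gives $r+n\in s_j$.

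For the converse, given $k\in s_i\cap s_j$, I would pick any box $(x,y)$ with $\rank(x,y)=k$ and show that the lattice point $(x+1,y)$, whose rank is $k-n$, lies on both $P_i$ and $P_j$. By the first paragraph this requires $k-n\notin\Delta_i$ and $k+m\in\Delta_i$ (and likewise for $\Delta_j$). If $k$ is an $n$-generator of $\Delta_i$, then $k-n\notin\Delta_i$ by definition and $k+m\in\Delta_i$ follows from $k\in\Delta_i$ by $m$-invariance; if $k$ is an $m$-cogenerator, then $k+m\in\Delta_i$ by definition and $k-n\notin\Delta_i$ holds because $k-n\in\Delta_i$ would force $k=(k-n)+n\in\Delta_i$, contrary to $k\notin\Delta_i$. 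The same case split applied to $\Delta_j$ puts $(x+1,y)$ on $P_j$ as well.

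The final equivalence stated in the lemma is then immediate: by the definition of $T_{n,m}^d$, two vertices of $G$ are joined by an edge precisely when their labelling skeletons intersect. The main subtlety to watch is the asymmetry between $n$ and $m$ in the step-rank conventions (the rank of a vertical step uses the box to its left while the rank of a horizontal step uses the box above), which is what singles out $r+n$, rather than $r+m$, as the distinguished skeleton element forced by having $(x,y)\in P_i$; the same asymmetry dictates the shift from $(x,y)$ to $(x+1,y)$ in the reverse direction.
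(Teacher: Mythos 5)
Your proof is correct and follows essentially the same route as the paper: both arguments reduce the claim to the observation that the bottom-right corner of a box of rank $k$ lies on $P_i$ exactly when $k\in s_i$, and identify $r+n$ (not $r$ or $r+m$) as the forced skeleton element when a lattice point of rank $r$ lies on the path. The only difference is presentational — you derive the lattice-point criterion ``$p\in P_i \iff \rank(p)\notin\Delta_i$ and $\rank(p)+n+m\in\Delta_i$'' explicitly from $(n,m)$-invariance, whereas the paper invokes the step notation $\ver_\square,\hor_\square$ from Lemma~\ref{lemma-dinv} more tersely.
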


\begin{proof}
Suppose that $x\in s_i\cap s_j,$ and let $\square$ be a box
in $\Z^2$
with $\rank (\square) = x$.
Then $P_i$ contains either the step $\ver_\square$ that is to the right
(if $x$ is an $n$-generator) or the step $\hor_\square$ at its bottom  (if $x$ is an $m$-cogenerator). (Here we extend the notation from Lemma \ref{lemma-dinv} to periodic paths.)
In both cases, $P_i$ passes through the right-bottom corner of $\square$. The  same is true for $P_j.$
Hence $P_i $ intersects $P_j$ at that corner.

Conversely, if $P_i$ contains a lattice point $p$
let $\square$ be the box with $p$ at its bottom right corner.
Then $P_i$ must contain either the step $\ver_\square$
going up from $p$, or the the step $\hor_\square$ going left from $p$.
In both cases, it implies $x = \rank (\square)$ 
has to be in the skeleton $s_i.$
The same holds for $P_j$ and $s_j,$ so $x \in s_i \cap s_j$. 
 Finally, the graph $G$ has an edge between two vertices if and only if the corresponding skeletons intersect.
\end{proof}

\subsubsection{Gluing algorithm}
\label{sec-glue}

We will glue together paths $P_0,\ldots, P_{d-1}$ (more precisely, a
union of possibly disconnected  intervals of total length $(n+m)$ of these paths) to get an $(N,M)$-Dyck path $D$ in the following way,
which we call our gluing algorithm. We start by taking the interval of $P_0$ that is an $(n,m)$-Dyck path (there is a unique way to choose such an interval, up to a periodic shift). At each step we glue in an interval of length $(n+m)$ of one of the periodic paths $P_1,\ldots,P_{d-1}$ into our path. This is done using the following procedure. 

Let $\hat{D}$ be a $(kn,km)$-Dyck path and let $\hat{P}$ be an  $(n,m)$-periodic path, such that the intersection $\hat{D}\cap\hat{P}$ is not empty. Let $p$ be the first point of intersection of $\hat{D}$ and $\hat{P}$ relative to $\hat{D}$ (recall that we orient all lattice paths from bottom-right to top-left, i.e., $p$ is the point of intersection, closest to the bottom-right end of $\hat{D}$). The new $\left((k+1)n,(k+1)m\right)$-Dyck path $\hat{D}\vee\hat{P}$ is the union of three lattice paths: 
\begin{enumerate}
\item First we follow the path $\hat{D}$ from its start up to $p$;
\item Then we follow $\hat{P}$ for $(n+m)$ steps starting at $p$;
\item Finally, we follow the remaining part of $\hat{D}$ translated by $n$ up and $m$ to the left, i.e., by $+(-m,n)$.
\end{enumerate}    

More precisely, let us now also identify a $(kn,km)$-Dyck path 
$\hat D$ with the function $\hat D:[0,k(n+m)] \to  \BR^2$, so that $\hat D$ is its plot and the function is an isometry to the image.
Similarly, a periodic path can be regarded as a function
$P: \BR \to \BR^2$ satisfying $P(z+m+n) = P(z) + (-m,n)$.
Given $r \in \BZ$ and
a function $I: [r, r+n+m] \to \BR^2$ satisfying
$I(r+n+m) = I(r) + (-m,n)$, we extend $I$ periodically to
$P(I):\BR \to \BR^2$ by $P(I)(z +k(n+m)) = I(z) +k(-m,n)$,
for $r\le z\le r+n+m$. 
Note if $I$ was an interval of a $(kn,km)$-Dyck path then 
$P(I)$ is normalized so $0\le z\le n+m$ implies $P(z)$ is
between the lines $y=0$ and $y=n$.  However, it is convenient
to treat periodic paths so their parameterization might not
be normalized in this way (i.e., so that we need not have $a=b$ below, or so that we can consider an interval
of it as a function with domain $[0,n+m]$).
Using this  function notation, we may describe
$$\hat{D}\vee\hat{P}(z) = \begin{cases}
\hat D (z) & 0 \le z \le a\\
\hat P (z +b-a) & a \le z \le a+n+m\\
\hat D (z -(m+n)) + (-m,n) & a+n+m \le z \le (k+1)(n+m)
\end{cases}$$
where $a,b \in \BR$ are the parameters such that $\hat{D} (a) =
\hat{P}(b) = p$ and $p \in \BR^2$ is the  first point of $\hat D$ 
that is also in $\hat P$.

We apply the above procedure $d-1$ times in the following order. Let $k_j = \#\{i \mid f(i) \le j\}$. We start by setting $D_0$ to be the interval of $P_0$ such that $D_0$ is an $(n,m)$-Dyck path. Take all paths $P_i$, such that $f(i)=1.$
Note that all such paths intersect $D_0$ and do not intersect each other. Therefore, we can glue them in using the above procedure, and the order in which we do it does not matter, i.e. the path created is independent of gluing order for these $i$. Let $D_1$ be the resulting rational Dyck path. 
Note is it a $(k_1 n , k_1 m)$-Dyck path.

At the $(j+1)$th step we start with the  $(k_jn,k_jm)$-Dyck path $D_j$ obtained from $D_0$ by gluing in intervals of all paths $P_i$ such that $f(i)\le j,$ one level of $G$ at a time, and we glue in intervals of all $P_i$'s, such that $f(i)=j+1.$ Again, all such paths intersect at least one of the intervals we glued in on the previous step, and they do not intersect each other. We proceed in the same manner until we glued in intervals of all periodic paths $P_1,\ldots, P_{d-1}$. (See Figure \ref{Figure: gluing} for an example.)

\begin{figure}
\begin{center}
\begin{tikzpicture}[scale=0.375]
\draw [step=1, thin,gray!40!white] (0,0) grid (8,12);


\draw[orange!50!white,thick] (0,9)--(0,6)--(2,6)--(2,3)--(4,3)--(4,0)--(6,0);
\draw[blue,very thick] (0,12)--(0,9)--(2,9);

\draw[fill=orange] (0,9) circle (2mm);

\draw[gray] (.5,11.5) node {\tiny{$-3$}};
\draw[gray] (.5,10.5) node {\tiny{$-1$}};
\draw[gray] (.5,9.5) node {\scriptsize{$1$}};
\draw[gray] (.5,8.5) node {\scriptsize{$3$}};
\draw[gray] (.5,7.5) node {\scriptsize{$5$}};
\draw[gray] (.5,6.5) node {\scriptsize{$7$}};
\draw[gray] (.5,5.5) node {\scriptsize{$9$}};

\draw[gray] (1.5,10.5) node {\tiny{$-4$}};
\draw[gray] (1.5,9.5) node {\tiny{$-2$}};
\draw[gray] (1.5,8.5) node {\scriptsize{$0$}};
\draw[gray] (1.5,7.5) node {\scriptsize{$2$}};
\draw[gray] (1.5,6.5) node {\scriptsize{$4$}};
\draw[gray] (1.5,5.5) node {\scriptsize{$6$}};
\draw[gray] (1.5,4.5) node {\scriptsize{$8$}};
\draw[gray] (1.5,3.5) node {\scriptsize{$10$}};
\draw[gray] (1.5,2.5) node {\scriptsize{$12$}};

\draw[gray] (2.5,9.5) node {\tiny{$-5$}};
\draw[gray] (2.5,8.5) node {\tiny{$-3$}};
\draw[gray] (2.5,7.5) node {\tiny{$-1$}};
\draw[gray] (2.5,6.5) node {\scriptsize{$1$}};
\draw[gray] (2.5,5.5) node {\scriptsize{$3$}};
\draw[gray] (2.5,4.5) node {\scriptsize{$5$}};
\draw[gray] (2.5,3.5) node {\scriptsize{$7$}};
\draw[gray] (2.5,2.5) node {\scriptsize{$9$}};

\draw[gray] (3.5,7.5) node {\tiny{$-4$}};
\draw[gray] (3.5,6.5) node {\tiny{$-2$}};
\draw[gray] (3.5,5.5) node {\scriptsize{$0$}};
\draw[gray] (3.5,4.5) node {\scriptsize{$2$}};
\draw[gray] (3.5,3.5) node {\scriptsize{$4$}};
\draw[gray] (3.5,2.5) node {\scriptsize{$6$}};
\draw[gray] (3.5,1.5) node {\scriptsize{$8$}};
\draw[gray] (3.5,0.5) node {\scriptsize{$10$}};

\draw[gray] (4.5,6.5) node {\tiny{$-5$}};
\draw[gray] (4.5,5.5) node {\tiny{$-3$}};
\draw[gray] (4.5,4.5) node {\tiny{$-1$}};
\draw[gray] (4.5,3.5) node {\scriptsize{$1$}};
\draw[gray] (4.5,2.5) node {\scriptsize{$3$}};
\draw[gray] (4.5,1.5) node {\scriptsize{$5$}};
\draw[gray] (4.5,.5) node {\scriptsize{$7$}};

\draw[gray] (5.5,4.5) node {\tiny{$-4$}};
\draw[gray] (5.5,3.5) node {\tiny{$-2$}};
\draw[gray] (5.5,2.5) node {\scriptsize{$0$}};
\draw[gray] (5.5,1.5) node {\scriptsize{$2$}};
\draw[gray] (5.5,0.5) node {\scriptsize{$4$}};

\draw[gray] (6.5,3.5) node {\tiny{$-5$}};
\draw[gray] (6.5,2.5) node {\tiny{$-3$}};
\draw[gray] (6.5,1.5) node {\tiny{$-1$}};
\draw[gray] (6.5,.5) node {\scriptsize{$1$}};

\draw[gray] (7.5,1.5) node {\tiny{$-4$}};
\draw[gray] (7.5,0.5) node {\tiny{$-2$}};

\draw (9,6) node {$\leadsto$};
\end{tikzpicture}
\begin{tikzpicture}[scale=0.375]
\draw [step=1, gray!40!white] (0,0) grid (8,12);


\draw[mygreen!50!white,thick] (0,12)--(0,10)--(1,10)--(1,9)--(2,9)--(2,7)--(3,7)--(3,6)--(4,6)--(4,4)--(5,4)--(5,3)--(6,3)--(6,1)--(7,1)--(7,0)--(8,0);

\draw[blue,very thick] (0,12)--(0,9); 
\draw[orange,very thick] (0,9)--(0,6)--(2,6); 
\draw[blue,very thick] (2,6)--(4,6); 

\draw[fill=mygreen] (4,6) circle (2mm);

\draw[gray] (.5,11.5) node {\tiny{$-3$}};
\draw[gray] (.5,10.5) node {\tiny{$-1$}};
\draw[gray] (.5,9.5) node {\scriptsize{$1$}};
\draw[gray] (.5,8.5) node {\scriptsize{$3$}};
\draw[gray] (.5,7.5) node {\scriptsize{$5$}};
\draw[gray] (.5,6.5) node {\scriptsize{$7$}};
\draw[gray] (.5,5.5) node {\scriptsize{$9$}};

\draw[gray] (1.5,10.5) node {\tiny{$-4$}};
\draw[gray] (1.5,9.5) node {\tiny{$-2$}};
\draw[gray] (1.5,8.5) node {\scriptsize{$0$}};
\draw[gray] (1.5,7.5) node {\scriptsize{$2$}};
\draw[gray] (1.5,6.5) node {\scriptsize{$4$}};
\draw[gray] (1.5,5.5) node {\scriptsize{$6$}};
\draw[gray] (1.5,4.5) node {\scriptsize{$8$}};
\draw[gray] (1.5,3.5) node {\scriptsize{$10$}};
\draw[gray] (1.5,2.5) node {\scriptsize{$12$}};

\draw[gray] (2.5,9.5) node {\tiny{$-5$}};
\draw[gray] (2.5,8.5) node {\tiny{$-3$}};
\draw[gray] (2.5,7.5) node {\tiny{$-1$}};
\draw[gray] (2.5,6.5) node {\scriptsize{$1$}};
\draw[gray] (2.5,5.5) node {\scriptsize{$3$}};
\draw[gray] (2.5,4.5) node {\scriptsize{$5$}};
\draw[gray] (2.5,3.5) node {\scriptsize{$7$}};
\draw[gray] (2.5,2.5) node {\scriptsize{$9$}};

\draw[gray] (3.5,7.5) node {\tiny{$-4$}};
\draw[gray] (3.5,6.5) node {\tiny{$-2$}};
\draw[gray] (3.5,5.5) node {\scriptsize{$0$}};
\draw[gray] (3.5,4.5) node {\scriptsize{$2$}};
\draw[gray] (3.5,3.5) node {\scriptsize{$4$}};
\draw[gray] (3.5,2.5) node {\scriptsize{$6$}};
\draw[gray] (3.5,1.5) node {\scriptsize{$8$}};
\draw[gray] (3.5,0.5) node {\scriptsize{$10$}};

\draw[gray] (4.5,6.5) node {\tiny{$-5$}};
\draw[gray] (4.5,5.5) node {\tiny{$-3$}};
\draw[gray] (4.5,4.5) node {\tiny{$-1$}};
\draw[gray] (4.5,3.5) node {\scriptsize{$1$}};
\draw[gray] (4.5,2.5) node {\scriptsize{$3$}};
\draw[gray] (4.5,1.5) node {\scriptsize{$5$}};
\draw[gray] (4.5,.5) node {\scriptsize{$7$}};

\draw[gray] (5.5,4.5) node {\tiny{$-4$}};
\draw[gray] (5.5,3.5) node {\tiny{$-2$}};
\draw[gray] (5.5,2.5) node {\scriptsize{$0$}};
\draw[gray] (5.5,1.5) node {\scriptsize{$2$}};
\draw[gray] (5.5,0.5) node {\scriptsize{$4$}};

\draw[gray] (6.5,3.5) node {\tiny{$-5$}};
\draw[gray] (6.5,2.5) node {\tiny{$-3$}};
\draw[gray] (6.5,1.5) node {\tiny{$-1$}};
\draw[gray] (6.5,.5) node {\scriptsize{$1$}};

\draw[gray] (7.5,1.5) node {\tiny{$-4$}};
\draw[gray] (7.5,0.5) node {\tiny{$-2$}};

\draw (9,6) node {$\leadsto$};
\end{tikzpicture}
\begin{tikzpicture}[scale=0.375]
\draw [step=1, gray!40!white] (0,0) grid (8,12);


\draw[red!40!white,thick] (0,9)--(0,7)--(1,7)--(1,6)--(2,6)--(2,4)--(3,4)--(3,3)--(4,3)--(4,1)--(5,1)--(5,0)--(6,0)--(6,0);

\draw[blue,very thick] (0,12)--(0,9); 
\draw[orange,very thick] (0,9)--(0,6)--(2,6); 
\draw[blue,very thick] (2,6)--(4,6); 
\draw[mygreen,very thick] (4,6)--(4,4)--(5,4)--(5,3)--(6,3);

\draw[fill=red] (2,6) circle (2mm);

\draw[gray] (.5,11.5) node {\tiny{$-3$}};
\draw[gray] (.5,10.5) node {\tiny{$-1$}};
\draw[gray] (.5,9.5) node {\scriptsize{$1$}};
\draw[gray] (.5,8.5) node {\scriptsize{$3$}};
\draw[gray] (.5,7.5) node {\scriptsize{$5$}};
\draw[gray] (.5,6.5) node {\scriptsize{$7$}};
\draw[gray] (.5,5.5) node {\scriptsize{$9$}};

\draw[gray] (1.5,10.5) node {\tiny{$-4$}};
\draw[gray] (1.5,9.5) node {\tiny{$-2$}};
\draw[gray] (1.5,8.5) node {\scriptsize{$0$}};
\draw[gray] (1.5,7.5) node {\scriptsize{$2$}};
\draw[gray] (1.5,6.5) node {\scriptsize{$4$}};
\draw[gray] (1.5,5.5) node {\scriptsize{$6$}};
\draw[gray] (1.5,4.5) node {\scriptsize{$8$}};
\draw[gray] (1.5,3.5) node {\scriptsize{$10$}};
\draw[gray] (1.5,2.5) node {\scriptsize{$12$}};

\draw[gray] (2.5,9.5) node {\tiny{$-5$}};
\draw[gray] (2.5,8.5) node {\tiny{$-3$}};
\draw[gray] (2.5,7.5) node {\tiny{$-1$}};
\draw[gray] (2.5,6.5) node {\scriptsize{$1$}};
\draw[gray] (2.5,5.5) node {\scriptsize{$3$}};
\draw[gray] (2.5,4.5) node {\scriptsize{$5$}};
\draw[gray] (2.5,3.5) node {\scriptsize{$7$}};
\draw[gray] (2.5,2.5) node {\scriptsize{$9$}};

\draw[gray] (3.5,7.5) node {\tiny{$-4$}};
\draw[gray] (3.5,6.5) node {\tiny{$-2$}};
\draw[gray] (3.5,5.5) node {\scriptsize{$0$}};
\draw[gray] (3.5,4.5) node {\scriptsize{$2$}};
\draw[gray] (3.5,3.5) node {\scriptsize{$4$}};
\draw[gray] (3.5,2.5) node {\scriptsize{$6$}};
\draw[gray] (3.5,1.5) node {\scriptsize{$8$}};
\draw[gray] (3.5,0.5) node {\scriptsize{$10$}};

\draw[gray] (4.5,6.5) node {\tiny{$-5$}};
\draw[gray] (4.5,5.5) node {\tiny{$-3$}};
\draw[gray] (4.5,4.5) node {\tiny{$-1$}};
\draw[gray] (4.5,3.5) node {\scriptsize{$1$}};
\draw[gray] (4.5,2.5) node {\scriptsize{$3$}};
\draw[gray] (4.5,1.5) node {\scriptsize{$5$}};
\draw[gray] (4.5,.5) node {\scriptsize{$7$}};

\draw[gray] (5.5,4.5) node {\tiny{$-4$}};
\draw[gray] (5.5,3.5) node {\tiny{$-2$}};
\draw[gray] (5.5,2.5) node {\scriptsize{$0$}};
\draw[gray] (5.5,1.5) node {\scriptsize{$2$}};
\draw[gray] (5.5,0.5) node {\scriptsize{$4$}};

\draw[gray] (6.5,3.5) node {\tiny{$-5$}};
\draw[gray] (6.5,2.5) node {\tiny{$-3$}};
\draw[gray] (6.5,1.5) node {\tiny{$-1$}};
\draw[gray] (6.5,.5) node {\scriptsize{$1$}};

\draw[gray] (7.5,1.5) node {\tiny{$-4$}};
\draw[gray] (7.5,0.5) node {\tiny{$-2$}};

\draw (9,6) node {$\leadsto$};
\end{tikzpicture}
\begin{tikzpicture}[scale=0.375]
\draw [step=1, gray!20!white] (0,0) grid (8,12);


\draw[blue,very thick] (0,12)--(0,9); 
\draw[orange,very thick] (0,9)--(0,6)--(2,6); 
\draw[red,very thick] (2,6)--(2,4)--(3,4)--(3,3)--(4,3);
\draw[blue,very thick] (4,3)--(6,3); 
\draw[mygreen,very thick] (6,3)--(6,1)--(7,1)--(7,0)--(8,0);

\draw[gray] (.5,11.5) node {\tiny{$-3$}};
\draw[gray] (.5,10.5) node {\tiny{$-1$}};
\draw[gray] (.5,9.5) node {\scriptsize{$1$}};
\draw[gray] (.5,8.5) node {\scriptsize{$3$}};
\draw[gray] (.5,7.5) node {\scriptsize{$5$}};
\draw[gray] (.5,6.5) node {\scriptsize{$7$}};
\draw[gray] (.5,5.5) node {\scriptsize{$9$}};

\draw[gray] (1.5,10.5) node {\tiny{$-4$}};
\draw[gray] (1.5,9.5) node {\tiny{$-2$}};
\draw[gray] (1.5,8.5) node {\scriptsize{$0$}};
\draw[gray] (1.5,7.5) node {\scriptsize{$2$}};
\draw[gray] (1.5,6.5) node {\scriptsize{$4$}};
\draw[gray] (1.5,5.5) node {\scriptsize{$6$}};
\draw[gray] (1.5,4.5) node {\scriptsize{$8$}};
\draw[gray] (1.5,3.5) node {\scriptsize{$10$}};
\draw[gray] (1.5,2.5) node {\scriptsize{$12$}};

\draw[gray] (2.5,9.5) node {\tiny{$-5$}};
\draw[gray] (2.5,8.5) node {\tiny{$-3$}};
\draw[gray] (2.5,7.5) node {\tiny{$-1$}};
\draw[gray] (2.5,6.5) node {\scriptsize{$1$}};
\draw[gray] (2.5,5.5) node {\scriptsize{$3$}};
\draw[gray] (2.5,4.5) node {\scriptsize{$5$}};
\draw[gray] (2.5,3.5) node {\scriptsize{$7$}};
\draw[gray] (2.5,2.5) node {\scriptsize{$9$}};

\draw[gray] (3.5,7.5) node {\tiny{$-4$}};
\draw[gray] (3.5,6.5) node {\tiny{$-2$}};
\draw[gray] (3.5,5.5) node {\scriptsize{$0$}};
\draw[gray] (3.5,4.5) node {\scriptsize{$2$}};
\draw[gray] (3.5,3.5) node {\scriptsize{$4$}};
\draw[gray] (3.5,2.5) node {\scriptsize{$6$}};
\draw[gray] (3.5,1.5) node {\scriptsize{$8$}};
\draw[gray] (3.5,0.5) node {\scriptsize{$10$}};

\draw[gray] (4.5,6.5) node {\tiny{$-5$}};
\draw[gray] (4.5,5.5) node {\tiny{$-3$}};
\draw[gray] (4.5,4.5) node {\tiny{$-1$}};
\draw[gray] (4.5,3.5) node {\scriptsize{$1$}};
\draw[gray] (4.5,2.5) node {\scriptsize{$3$}};
\draw[gray] (4.5,1.5) node {\scriptsize{$5$}};
\draw[gray] (4.5,.5) node {\scriptsize{$7$}};

\draw[gray] (5.5,4.5) node {\tiny{$-4$}};
\draw[gray] (5.5,3.5) node {\tiny{$-2$}};
\draw[gray] (5.5,2.5) node {\scriptsize{$0$}};
\draw[gray] (5.5,1.5) node {\scriptsize{$2$}};
\draw[gray] (5.5,0.5) node {\scriptsize{$4$}};

\draw[gray] (6.5,3.5) node {\tiny{$-5$}};
\draw[gray] (6.5,2.5) node {\tiny{$-3$}};
\draw[gray] (6.5,1.5) node {\tiny{$-1$}};
\draw[gray] (6.5,.5) node {\scriptsize{$1$}};

\draw[gray] (7.5,1.5) node {\tiny{$-4$}};
\draw[gray] (7.5,0.5) node {\tiny{$-2$}};

\end{tikzpicture}
\end{center}
\caption{We apply the gluing algorithm to the periodic paths from Figure \ref{Figure: 4 periodic paths}, using the graph from Figure \ref{Figure-example: graph}. On each step we indicate the gluing point: the first intersection of the Dyck path built so far with the next periodic path. 
\label{Figure: gluing}}
\end{figure}
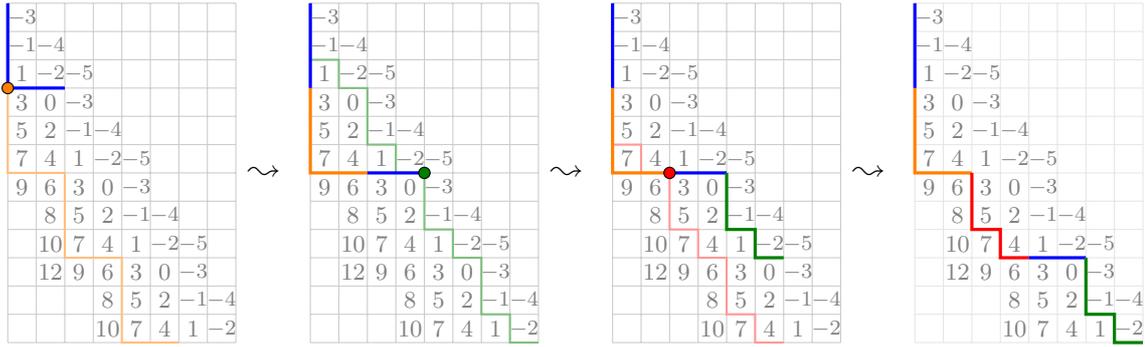

We need to show that this process is invertible. Consider an $(N,M)$-Dyck path $D.$ 

First we will define {\em removal} of intervals.
Let $D$ be a $(kn,km)$-Dyck path. We call $I$ a {\em balanced}
interval of $D$ if it consists of $n+m$ consecutive steps of
$D$ of which $n$ are vertical and $m$ are horizontal.
Using our function notation, this means $I$
is the restriction of $D$ to $[r, r+n+m]$, with $r \in \BZ$ and
$ D(r) = D(r+n+m) + (m,-n)$.
We will say $D'$ is obtained from $D$ by removing a balanced
interval $I$ if it corresponds to the function given by
$${D'}(z) = \begin{cases}
 D (z) & 0 \le z \le r\\
 D (z +(m+n)) + (m, -n)  & r\le z \le (k-1)(n+m)
\end{cases}.$$

\begin{definition}
An interval $I$ of $D$ is called {\it good} if it is of length $n+m,$ balanced,
and its $(n,m)$-periodic extension does not intersect the part of $D$ before $I.$ 
\end{definition}

Our definition of good is motivated by the need to invert the
gluing process. Thus good intervals must have the following properties.
Suppose we remove a good interval $I$ from a $(kn,km)$-Dyck path
$D$ which yields $D'$. If we now glue the periodic extension of $I$
into  $D'$ by taking the first (lowest rightmost) point of $D'$ that
intersects  the periodic path, this should yield the original $D$,
i.e., $D = D' \vee P(I)$.
That good removal inverts the gluing algorithm
 is based on the following Lemmas.

\begin{lemma}
There always exists at least one good interval in a $(kn,km)$-Dyck path
$D$.
\end{lemma}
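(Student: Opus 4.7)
The plan is to prove existence of a good interval via a two-step strategy: first exhibit balanced intervals, then single out a specific one via an extremal choice.

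For existence of balanced intervals, define $h(r)$ to be the number of horizontal steps of $D$ contained in the window $[r, r+n+m]$, for $0 \le r \le (k-1)(n+m)$. As $r$ shifts by one unit, $h(r)$ changes by at most $1$. Evaluating $h$ on the $k$ disjoint windows $W_j = [j(n+m), (j+1)(n+m)]$ gives values that sum to the total horizontal count of $D$, namely $km$. Hence at least one $W_j$ has $h \le m$ and at least one has $h \ge m$, so by a discrete intermediate value argument applied to the sliding window there exists $r$ with $h(r) = m$, producing a balanced interval.

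Among all balanced intervals, I would select $I^* = [r^*, r^* + n+m]$ with $r^*$ maximal. The claim is that $I^*$ is good. Arguing by contradiction, suppose $P(I^*)$ meets $D[0, r^*)$ at some lattice point $q = D(r_q)$ with $r_q < r^*$; then there exists $p = D(r_p) \in I^*$ (so $r^* \le r_p \le r^* + n+m$) with $p - q = k_0(-m, n)$ for some integer $k_0 \ge 1$, and consequently $r_p - r_q = k_0(n+m)$. One then observes that the subpath $D[r_q, r_p]$ consists of exactly $k_0 m$ horizontal and $k_0 n$ vertical steps, and its endpoints differ by the vector $k_0(-m,n)$.

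The target is then to use this recurring-displacement data to exhibit a balanced interval with start parameter strictly greater than $r^*$, contradicting the maximality of $r^*$. The natural approach is to translate the balanced structure of $I^*$ forward using the relation $D(r_p) = D(r_q) + k_0(-m, n)$: intuitively, sliding the balanced window of $I^*$ by the periodic shift, or equivalently extending a sliding-window count-analysis past $r_p$. The main obstacle is ensuring the produced balanced interval starts \emph{strictly} beyond $r^*$ rather than merely at $r^*$; a naive sliding-window argument within $[r_q, r_p]$ only yields a balanced window with start $\le r^*$. Overcoming this likely requires choosing the intersection $(q, p)$ minimally (for instance with $k_0$ minimal), and then using the balanced property of $I^*$ together with the fact that $[r^*, r^* + n+m] \subseteq D$ extends past $r_p$ to transfer the balance to a window based at some point in $(r^*, r_p + n+m]$.
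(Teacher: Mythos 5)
Your first step is fine: the sliding-window pigeonhole argument does establish the existence of a balanced interval. The paper instead argues topologically that $D$ must meet its translate $D+(m,-n)$, but the two approaches are interchangeable, since $D[r,r+n+m]$ is balanced exactly when $D(r)$ is a lattice point of $D\cap(D+(m,-n))$.

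The second step makes the wrong extremal choice, and this is a genuine error rather than a fillable gap. Paths are oriented from the bottom-right corner to the top-left corner, so ``the part of $D$ before $I$'' in the definition of a good interval refers to steps with \emph{smaller} parameter. The correct choice --- the one the paper asserts (``the balanced interval closest to the bottom-right end of $D$ is always good'') --- is the balanced interval with \emph{minimal} starting parameter $r^*$; you took $r^*$ \emph{maximal}. This already fails for $(N,M)=(2,2)$ and $D=\mathtt{hvhv}$: $D$ passes through $(2,0),(1,0),(1,1),(0,1),(0,2)$ and has balanced intervals at $r=0,1,2$. For $r^*=2$ the interval goes $(1,1)\to(0,1)\to(0,2)$, and its periodic extension is the staircase $\ldots,(2,0),(1,0),(1,1),(0,1),(0,2),\ldots$, which contains $D(0)=(2,0)$ and $D(1)=(1,0)$, both strictly before $I^*$; hence the last balanced interval is not good. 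The obstacle you flagged --- being able to produce only a balanced interval starting at most at $r^*$ rather than strictly beyond --- is thus a symptom, not a technicality: in this example no balanced interval can start past $r^*=2$ at all, so the contradiction you seek simply does not exist. Taking $r^*$ minimal reverses the direction of the argument: one then wants a balanced interval starting strictly \emph{below} $r^*$, and your own computation $r_p-r_q=k_0(n+m)$ produces exactly that in the case $k_0=1$, since it shows $D[r_q,r_q+n+m]$ is itself balanced with $r_q<r^*$.
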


\begin{proof}
The proof goes in two steps. First, one can show that there always exists at least one balanced interval of $D$ of length $(n+m).$ This is equivalent to showing that the intersection of $D$ with $D$ translated by $n$ down and $m$ to the right, i.e. $D$ intersect $D + (m,-n)$, is non-empty. 
Since $D$ is a Dyck path, it stays weakly below the diagonal
connecting its start with its end. It follows that $D+(m,-n)$
intersects  the vertical line through the start of $D$ (weakly)
below the start of  $D$, and ends (weakly) above $D$. Therefore,
it has to intersect $D$. 

Second, the balanced interval closest to the bottom-right end of $D$ is always good. 
\end{proof}

\begin{lemma}\label{Lemma: extensions of good intervals}
Periodic extensions of the good intervals of $D$ do not intersect each other. 
\end{lemma}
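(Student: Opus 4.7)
My plan is to argue by contradiction. Let $I_1, I_2$ be two distinct good intervals of $D$, based at parameters $r_1 < r_2$, and suppose there is some point $p \in P(I_1) \cap P(I_2)$. The argument splits into a uniform ``overlap-or-touch'' case $r_2 \le r_1 + n + m$ and a ``separated'' case $r_2 > r_1 + n + m$.

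In the overlap-or-touch case, I note that $r_1 + n + m \in [r_2, r_2 + n + m]$, so $D(r_1 + n + m) \in I_2 \subset P(I_2)$. Since $I_1$ is balanced, $D(r_1 + n + m) = D(r_1) + (-m, n)$, and applying the spatial period $(m, -n)$ of $P(I_2)$ gives $D(r_1) \in P(I_2)$ as well. But $r_1 < r_2$ and $D$ is injective along a Dyck path, so $D(r_1) \in D|_{[0, r_2]}$ and $D(r_1) \ne D(r_2)$; this is a non-endpoint element of $P(I_2) \cap D|_{[0, r_2]}$, contradicting the goodness of $I_2$. Hence $r_2 > r_1 + n + m$ strictly, which in particular rules out two good intervals whose parameter ranges either share interior or merely touch.

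In the separated case, I parameterize $p$ using the periodic structure of each extension: $p = D(z_1) + k_1(-m, n) = D(z_2) + k_2(-m, n)$ with $z_i \in [r_i, r_i + n + m]$ and $k_i \in \BZ$. Setting $j = k_2 - k_1$ gives $D(z_1) - D(z_2) = j(-m, n)$. Because $D$ is weakly decreasing in $x$ and weakly increasing in $y$ along its parameterization, and now $z_1 \le r_1 + n + m < r_2 \le z_2$, the component signs of $(-jm, jn)$ force $j \le 0$. If $j = 0$, then $D(z_1) = D(z_2)$, so $z_1 = z_2$ by injectivity of $D$; this is impossible as the parameter intervals are disjoint. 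If $j < 0$, then $D(z_1) = D(z_2) + j(-m, n) \in P(I_2)$, while $z_1 \le r_1 + n + m < r_2$ places $D(z_1)$ in $D|_{[0, r_2]}$ strictly before the endpoint $D(r_2)$; this again contradicts goodness of $I_2$. The main point to verify carefully is the bookkeeping in the periodic-shift parameterization and the sign argument; both the overlap-or-touch case and the separated case reduce, via balancedness and periodicity, to producing a ``periodic copy'' of a point of $I_2$ that lies too early in $D$ for $I_2$ to be good.
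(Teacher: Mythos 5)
Your proof is correct and takes essentially the same approach as the paper, which simply states "otherwise the one further away from the bottom-right end of $D$ is not good": in both cases one exhibits a $(-m,n)$-translate of the common point lying on $D$ strictly before $D(r_2)$, contradicting goodness of the later interval. You have filled in the bookkeeping (the overlap/touch versus separated case split, and the sign argument via monotonicity of $D$) that the paper treats as evident.
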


\begin{proof}
Indeed, otherwise the one further away from the bottom-right end of $D$ is not good. 
\end{proof}

\begin{lemma}\label{Lemma: good intervals}
If $I$ and $J$ are good intervals of $D,$ and $D'$ is obtained from $D$ by removing $I,$ then the image of  $J$ in $D'$
 is still a good interval of $D'.$
\end{lemma}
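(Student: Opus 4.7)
The plan is to split into two cases based on whether $J$ lies before or after $I$ along the path $D$. First I would observe that by Lemma \ref{Lemma: extensions of good intervals}, the periodic extensions of any two good intervals are disjoint; since each interval sits inside its own periodic extension, $I$ and $J$ must themselves be disjoint intervals of $D$, so one of them lies strictly before the other.

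In the easy case, $J$ precedes $I$. Then by the removal formula, $D'$ agrees with $D$ on the entire initial segment up through $J$, so the image of $J$ in $D'$ is literally $J$ itself (same steps, same balance, same periodic extension), and the part of $D'$ before it equals the part of $D$ before $J$. Goodness of $J$ in $D$ then transfers verbatim.

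The substantive case is when $J$ comes after $I$. Writing $I = D|_{[r,r+n+m]}$ and $J = D|_{[s,s+n+m]}$ with $s \ge r+n+m$, the image $J'$ of $J$ in $D'$ is the translate of $J$ by $-(m,-n)$. In particular $J'$ is still balanced of length $n+m$, and because $(m,-n)$ is a period vector of the $(n,m)$-periodic extension, the periodic extension of $J'$ coincides as a subset of $\BR^2$ with $P(J)$. The portion of $D'$ before $J'$ decomposes as $A \cup \bigl(B-(m,-n)\bigr)$, where $A=D|_{[0,r]}$ is the part of $D$ before $I$ and $B=D|_{[r+n+m,s]}$ is the segment of $D$ strictly between $I$ and $J$. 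Both $A$ and $B$ lie inside the portion of $D$ before $J$, so by goodness of $J$ in $D$ we get $P(J)\cap A=\emptyset$ directly; for the shifted piece, using that $(m,-n)$ is a period of $P(J)$, we have $P(J)\cap(B-(m,-n))=(P(J)+(m,-n))\cap B=P(J)\cap B=\emptyset$.

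I expect the only delicate point to be pure bookkeeping: making sure the sign of the shift appearing in the removal formula for $D'$ matches the period vector of $P(J)$ exactly, and recognizing that shifting a periodic set by one of its periods is an identity on subsets of $\BR^2$. Once that is recorded, neither case presents any obstacle beyond the direct application of the definition of goodness to an initial segment of $D$ strictly containing the ones that appear in $D'$.
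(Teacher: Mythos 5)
Your argument is correct and is essentially the expanded version of the paper's one-line proof, ``if the periodic extension of $J$ intersects the part of $D'$ before it, then it also intersects the part of $D$ before it'' --- the content hiding in that line is precisely your observation that $P(J)$ is invariant under translation by $\pm(m,-n)$, so the intersection check against $A\cup(B+(m,-n))$ reduces to the intersection check against $A\cup B\subset D$. The one slip is the sign you yourself flagged: the removal formula translates the tail of $D$ by $+(m,-n)$, not $-(m,-n)$, so $J'=J+(m,-n)$ and the middle piece becomes $B+(m,-n)$; since both $(m,-n)$ and $(-m,n)$ are periods of $P(J)$, the conclusion is unaffected.
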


\begin{proof}
Indeed, if the periodic extension of $J$ intersects the part of $D'$ before it, then it also intersects the part of $D$ before it.
\end{proof}

\begin{lemma}
Let $G\in T^d_{n,m}$ and
suppose that the $(N,M)$-Dyck path $D$ was obtained from the periodic paths $P_1,\ldots,P_{d-1}$ according to the gluing algorithm given by $G$. 
The periodic extension $P(I)$ of good intervals $I$ of $D$ agree with the $P_i$ for $i$ the sinks of $G$. 
\end{lemma}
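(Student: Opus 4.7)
The plan is to establish the set equality $\{P(I) : I \text{ a good interval of } D\} = \{P_i : i \text{ a sink of } G\}$ in two directions.

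\textbf{Forward direction.} Fix a sink $i$ of $G$ and let $I_i$ denote the length-$(n+m)$ interval that the algorithm inserts when it processes vertex $i$. I will verify that $I_i$ persists as a balanced contiguous sub-interval of $D$, that $P(I_i)=P_i$, and that it is good. The key observation is that since $i$ is a sink, for every $j$ with $f(j)>f(i)$ there is no edge $i\to j$ in $G$, and therefore by Lemma~\ref{Lemma: intersections} the periodic paths $P_i$ and $P_j$ are disjoint. Consequently, no subsequent insertion of a piece $I_j$ can be placed inside $I_i$, and $I_i$ remains a contiguous interval of length $n+m$. Balancedness is automatic since $I_i$ is one full period of $P_i$ (it contains $n$ vertical and $m$ horizontal steps). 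The accumulated translations of $I_i$ by integer multiples of $(-m,n)$, arising from later insertions placed before $I_i$ in $D$, do not change its periodic extension, since $P_i$ is itself invariant under $(-m,n)$. For goodness, $I_i$ was initially glued at the first intersection point of $P_i$ with the then-current Dyck path, so the pre-insertion portion did not intersect $P_i$; later modifications of that portion either insert pieces of $P_j$ with $f(j)>f(i)$ (which are disjoint from $P_i$ by the same sink argument) or translate earlier segments by $(-m,n)$ (which preserves intersection with the $(-m,n)$-invariant set $P_i$). Hence $P_i$ remains disjoint from the pre-$I_i$ portion in the final $D$.

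\textbf{Converse direction.} Let $I$ be an arbitrary good interval of $D$. I would decompose $D$ into maximal contiguous fragments, each lying entirely within a single periodic path $P_l$. For a non-sink vertex $l$, any outgoing edge $l\to l'$ forces the insertion of $I_{l'}$ (or some descendant) into $I_l$, splitting the total contribution of $P_l$-steps to $D$ into several fragments, each of length strictly less than $n+m$. For a sink $l$, the $P_l$-contribution to $D$ is the single contiguous fragment $I_l$ of length $n+m$. Therefore, if $I$ is contained in a single fragment, length matching forces $I=I_l$ for a sink $l$, so $P(I)=P_l$. To complete the proof, I rule out the possibility that $I$ crosses a gluing transition between two adjacent fragments coming from distinct periodic paths $P_{l_1}$ and $P_{l_2}$. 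At any such transition point $p$, the paths $P_{l_1}$ and $P_{l_2}$ must intersect (since $p$ is where $P_{l_2}$ was glued onto a piece of $P_{l_1}$). Using the $(-m,n)$-periodicity of $P(I)$ together with the fact that the portion of $D$ preceding $I$ contains a $(-m,n)$-translate of a $P_{l_1}$-segment through $p$, I would conclude that $P(I)$ meets the pre-$I$ portion of $D$, contradicting goodness.

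\textbf{Main obstacle.} The forward direction reduces to careful bookkeeping once one notices that the only non-trivial point---whether translations and later insertions preserve disjointness from $P_i$---is settled by the $(-m,n)$-invariance of $P_i$ together with the sink property. The subtler half is the converse, and in particular the contradiction in the transition case, which requires identifying explicitly a pre-$I$ intersection with $P(I)$ and tracking the accumulated $(-m,n)$-translations that align the periodic extension of $I$ with an earlier $P_{l_1}$-fragment. This is where most of the work lies.
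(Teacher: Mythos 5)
Your forward direction (each sink $i$ yields a good interval $I_i$ with $P(I_i)=P_i$) is essentially sound. The key facts you use---that for a sink $i$ every $j\neq i$ with $f(j)\ge f(i)$ has $P_j\cap P_i=\emptyset$, so later insertions neither split $I_i$ nor add intersection points with $P_i$ below it, and that $(-m,n)$-translations preserve disjointness from the $(-m,n)$-invariant set $P_i$---are correct. (You should also explicitly include the case $f(j)=f(i)$, $j\neq i$, which is disjoint from $P_i$ by the minimality of the shift; this is a small omission.)

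Your converse direction, however, contains a genuine gap. The claim ``for a non-sink vertex $l$, any outgoing edge $l\to l'$ forces the insertion of $I_{l'}$ (or some descendant) into $I_l$, splitting the total contribution of $P_l$-steps into fragments of length $<n+m$'' is false. The piece $I_{l'}$ is glued at the \emph{first} intersection of $P_{l'}$ with the entire current Dyck path, which need not lie inside $I_l$; it can lie strictly lower, in a fragment of some earlier $P_{l''}$, leaving $I_l$ intact. The paper's own worked example (Figure~\ref{Figure: gluing}, continuing Example~\ref{Example: minimal shift}) is a counterexample: vertex $3$ (orange, $s_3=\{4,6,7,8,10\}$) has the outgoing edge $3\to 2$, hence is not a sink, yet its contribution to the final $D$ is a single contiguous interval of length $n+m=5$ (from $(4,3)$ to $(2,6)$), because red was glued at the lower endpoint of orange rather than in its interior. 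Your ``length matching forces $I=I_l$ for a sink $l$'' step would therefore misclassify that interval. The orange interval is in fact not good, but the obstruction is not fragmentation: it is that the red interval directly below is a full period of $P_2$, and since $P_2\cap P_3\neq\emptyset$, the periodic extension $P_3$ meets the part of $D$ below the orange interval. Your transition-crossing argument does not help here because the orange interval does not cross a transition.

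The paper's proof of the converse avoids this pitfall by a different argument: using Lemma~\ref{Lemma: good intervals}, it shows by reversing the gluing steps that every good interval of $D$ must be one of the glued pieces $I_i$; then, if $i$ had an outgoing edge $i\to j$, the piece $I_j$ is glued after $I_i$ at the first intersection of $P_j$ with the current path, and since $P_j$ meets every full period of $P_i$, this intersection lies in or below $I_i$. Either $I_i$ is split, or $I_j$ sits lower and meets $P_i$, destroying goodness of $I_i$. This directly covers the case your fragmentation argument misses. You should replace your converse with an argument of this type; in particular you need the observation that a good interval survives removal of other good intervals (Lemma~\ref{Lemma: good intervals}), which lets you peel the algorithm backward and identify any good interval with some glued piece.
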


\begin{proof}
In this proof we will use ``before" and ``after" according to
the steps of the gluing algorithm (temporally), and switch to
``higher" or ``lower" to refer to locations of steps or lattice points
of paths.

Let $I$ be a good interval of $D.$ Then, according to Lemma \ref{Lemma: good intervals} either it was glued in on the last step of our algorithm, or it was already a good interval before the last step. In the latter case, the same holds for the second to the last step and so on. Therefore, $I$ was glued in at some point. 

Suppose that $I$ corresponds to the vertex $i$ of $G$, and suppose that there is an edge $i\to j$ in $G.$ Then the interval corresponding to $j$ was glued in after $I$ was. Therefore, either it was glued in in the middle of $I,$ or lower than $I$, in which case after that gluing $I$ is not a good interval any more, because periodic paths $P_i$ and $P_j$ intersect. Contradiction.
\end{proof}

\begin{theorem}
The map $B: T_{n,m}^d\to \YNM$ is a bijection.
\end{theorem}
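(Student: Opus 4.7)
The plan is to construct an explicit inverse $B^{-1}:\YNM\to T_{n,m}^d$ by iteratively removing good intervals, and to verify that $B$ and $B^{-1}$ are mutually inverse.

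First I would check that $B$ is well-defined. At level $j+1$ of the gluing algorithm, the periodic paths $P_i$ with $f(i)=j+1$ correspond to an antichain of vertices of $G$ with no edges among themselves (since $f$ is strictly monotone along edges), so by Lemma \ref{Lemma: intersections} they are pairwise non-intersecting. Hence the order of gluing within a single level does not affect the result. The output stays below the diagonal by induction on levels: each $\vee$ operation preserves the Dyck property because $\hat{P}$ restricted to $(n+m)$ steps starting at the intersection point $p$ lies weakly below the diagonal issuing from $p$ in direction $(-m,n)$. Second, I would show that if $D=B(G)$, then the set of good intervals of $D$ is exactly the set of intervals corresponding to sinks of $G$: one direction is the final lemma stated in the excerpt, and the converse follows because an interval glued in at a non-final level has subsequent pieces of $D$ glued into it, so its periodic extension crosses later portions of $D$ but also, by symmetric reasoning and Lemma \ref{Lemma: extensions of good intervals}, an interval glued at level $f(i)<\max f$ is always superseded by a higher-level gluing that destroys goodness.

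For surjectivity I would strip off good intervals iteratively. Given $D\in\YNM$, let $I_1,\ldots,I_r$ be the good intervals of $D$. Their periodic extensions $P(I_k)$ pairwise do not intersect (Lemma \ref{Lemma: extensions of good intervals}), so by Lemma \ref{Lemma: good intervals} we may remove them in any order to obtain a $((k_{d-1}-r)n,(k_{d-1}-r)m)$-Dyck path $D'$; each $P(I_k)$ will become a sink of the graph under construction, with label $s(I_k):=\lfloor \rank(P(I_k))/d\rfloor$ suitably shifted. Iterate on $D'$, at each stage adding new sinks and drawing an edge from each new vertex to every previously stripped-off vertex whose periodic path it intersects (by Lemma \ref{Lemma: intersections}). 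After finitely many stages we are left with a single $(n,m)$-Dyck path, whose periodic extension we take as the label $s_0$ of the source vertex $0$; this is automatically $0$-normalized since it arose from an honest $(n,m)$-Dyck path. The resulting digraph is acyclic with unique source by construction, so it lies in $T_{n,m}^d$, and the edge set agrees with the skeleton-intersection criterion by Lemma \ref{Lemma: intersections}.

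Finally I would verify that $B$ and $B^{-1}$ are inverses. For $B^{-1}\circ B=\mathrm{id}$: the removal process peels off sinks in reverse order of gluing, and at each stage the peeled-off interval is exactly the one originally glued in, because the first intersection of the previously-constructed Dyck path with $P_i$ was precisely the attachment point, so the removed interval's periodic extension recovers $P_i$. For $B\circ B^{-1}=\mathrm{id}$: when we glue $P(I_k)$ back into $D'$, the first intersection point with $D'$ is exactly the location from which $I_k$ was removed, since no earlier portion of $D$ intersected $P(I_k)$ by the goodness of $I_k$.

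\textbf{Main obstacle.} The main subtlety is tracking orientations and making sure the reconstructed graph genuinely lies in $T_{n,m}^d$ with the correct labels. The point that each peeled-off periodic path gives a well-defined label $s_i$ (consistent with non-negative normalization) requires checking that after shifting by the rank of the starting lattice point, the periodic path is zero- or positively-normalized; this in turn rests on the Dyck property of the intermediate paths throughout the removal process, so the delicate verification is really bookkeeping of the levels and labels rather than a new combinatorial fact.
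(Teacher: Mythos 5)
Your overall strategy matches the paper's: strip off good intervals iteratively, identify them with sinks of $G$, and build $B^{-1}$ recursively (the paper phrases this as an induction on $d$ by matching a list of structural properties on both sides). However, your argument for the key claim that \emph{sinks of $G$ correspond to good intervals of $D=B(G)$} has a genuine error.

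You write that ``an interval glued at level $f(i)<\max f$ is always superseded by a higher-level gluing that destroys goodness.'' This is false: a sink of $G$ can occur at any level, not only the top one. In the paper's running example (Figures~\ref{Figure-example: graph} and \ref{Figure: reconstructing G}), the vertex with skeleton $\{-2,-1,0,1,2\}$ sits at level $f=1$ while $\max f = 2$; it is a sink, and its interval \emph{is} good (it is one of the two good intervals peeled off in the first step of the reconstruction). Moreover, what you actually argue — that non-sinks fail to be good — is the contrapositive of the direction ``good $\Rightarrow$ sink'' that the paper's preceding lemma already establishes, not its converse ``sink $\Rightarrow$ good,'' which is what your bookkeeping for $B^{-1}\circ B=\mathrm{id}$ actually requires. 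The correct argument for ``sink $\Rightarrow$ good'' must use the definition of a sink directly: if $i$ has no outgoing edge, then by Lemma~\ref{Lemma: intersections} no periodic path $P_j$ glued in at a level $>f(i)$ intersects $P_i$; since $P_i$ is $(-m,n)$-periodic, translating the already-placed interval $I_i$ upward during later gluings does not change $P(I_i)$, so the part of $D$ preceding $I_i$ never acquires an intersection with $P(I_i)$ and $I_i$ remains good through to the end. Once this is fixed, your verification of the two composites is sound and essentially coincides with the paper's inductive argument via the parallel lists of properties on the graph and Dyck-path sides.
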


\begin{proof}
We will induct on $d.$ The case $d=1$ corresponds to the relatively prime case.

 There is some flexibility in the gluing algorithm: if two periodic paths do not intersect each other, then as noted previously it does not matter in which order we glue in intervals of these paths. In particular, we can change the order so that the paths corresponding to the sink vertices $i$ of the graph $G$ are glued in at the last step of the gluing algorithm. Suppose that $G$ has $k$ sink vertices 
$\{i_1, \ldots, i_k\}$ and let $G'\in T_{n,m}^{d-k}$ be the labeled graph obtained from $G$ by removing the sink vertices.
Let also $(s_{i_1},\ldots, s_{i_k})$ be the skeletons corresponding to the sink vertices of $G.$ Note that the following two properties are satisfied:
\begin{enumerate}
\item The skeletons $s_{i_1},\ldots,s_{i_k}$ are pairwise disjoint,
\item Every skeleton corresponding to a sink vertex of the graph $G'$
intersects at least one of the skeletons $s_i, i \not\in \{i_1, \ldots, i_k\}$.
\end{enumerate}
Indeed, if the first property is not satisfied then the corresponding two vertices of $G$ are connected by an edge and cannot both be sinks. 
If the second property is not satisfied, then the corresponding vertex is also a sink of $G,$ which is a contradiction. Conversely, for any $0<k<d,$ any labeled graph $G'\in T_{n,m}^{d-k}$ and a collection of skeletons $s_{i_1},\ldots, s_{i_k}$ satisfying the above two conditions there is a unique graph $G\in T_{n,m}^d,$ such that $s_{i_1},\ldots,s_{i_k}$ are the labels of the sink vertices of $G,$ and $G'$ is obtained from $G$ by removing the sink vertices.

Exactly the same situation happens on the Dyck path side. Let $D\in \YNM$ be an $(N,M)$-Dyck path. Suppose it has $k$ good intervals, and
let  $D'\in Y_{(N-kn,M-km)}$ be the Dyck path obtained from $D$ by removing the good intervals. Let $P_1,\ldots, P_k$ be the periodic extensions of the good intervals of $D.$ The following two properties are satisfied:
\begin{enumerate}
\item The periodic paths $P_1,\ldots,P_k$ are pairwise disjoint,
\item  The periodic extension of any good interval of the Dyck path $D'$ intersects at least one of the paths $P_1,\ldots,P_k.$
\end{enumerate}
Indeed, if the first property is not satisfied then the corresponding two intervals of $D$ cannot both be good. If the second property is not satisfied, then the corresponding good interval of $D'$ is also a good interval of $D,$ which is a contradiction. Conversely, for any $0<k<d,$ any Dyck path $D'\in Y_{(N-kn,M-km)}$ and a collection of $(n,m)$-periodic paths $P_1,\ldots, P_k$ satisfying the above two conditions there is a unique Dyck path $D\in \YNM,$ such that $P_1,\ldots,P_k$ are the periodic extensions of the good intervals of $D,$ and $D'$ is obtained from $D$ by removing all good intervals.

Using Lemma \ref{Lemma: intersections} and induction on $d$ we now can build the inverse map $B^{-1}:\YNM\to T_{n,m}^d.$ See Figure \ref{Figure: reconstructing G} for an example.
\end{proof}


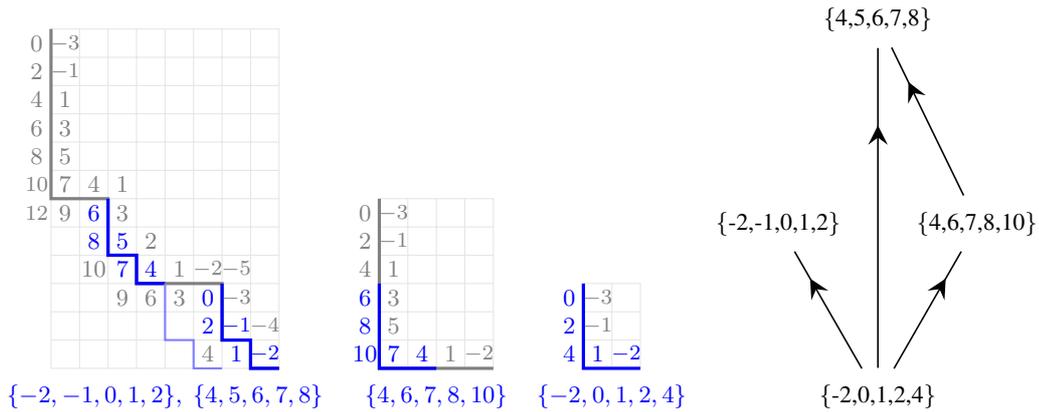
\begin{figure}
\begin{center}
\begin{tikzpicture}[scale=0.375]
\draw [step=1, gray!20!white] (0,0) grid (8,12);


\draw[gray,very thick] (0,12)--(0,6)--(2,6)--(2,4)--(3,4)--(3,3)--(6,3)--(6,1)--(7,1)--(7,0)--(8,0); 

\draw[blue,very thick] (6,3)--(6,1)--(7,1)--(7,0)--(8,0); 
\draw[blue,very thick] (2,6)--(2,4)--(3,4)--(3,3)--(4,3); 

\draw[blue!50!white,thick] (4,3)--(4,1)--(5,1)--(5,0)--(6,0);

\draw[blue] (4,-1) node {\scriptsize{$\{-2,-1,0,1,2\},\ \{4,5,6,7,8\}$}};

\draw[gray] (-0.5,11.5) node {\scriptsize{$0$}};
\draw[gray] (-0.5,10.5) node {\scriptsize{$2$}};
\draw[gray] (-0.5,9.5) node {\scriptsize{$4$}};
\draw[gray] (-0.5,8.5) node {\scriptsize{$6$}};
\draw[gray] (-0.5,7.5) node {\scriptsize{$8$}};
\draw[gray] (-0.5,6.5) node {\tiny{$10$}};
\draw[gray] (-0.5,5.5) node {\tiny{$12$}};

\draw[gray] (.5,11.5) node {\tiny{$-3$}};
\draw[gray] (.5,10.5) node {\tiny{$-1$}};
\draw[gray] (.5,9.5) node {\scriptsize{$1$}};
\draw[gray] (.5,8.5) node {\scriptsize{$3$}};
\draw[gray] (.5,7.5) node {\scriptsize{$5$}};
\draw[gray] (.5,6.5) node {\scriptsize{$7$}};
\draw[gray] (.5,5.5) node {\scriptsize{$9$}};

\draw[gray] (1.5,6.5) node {\scriptsize{$4$}};
\draw[blue] (1.5,5.5) node {\bf\scriptsize{$6$}};
\draw[blue] (1.5,4.5) node {\bf\scriptsize{$8$}};
\draw[gray] (1.5,3.5) node {\scriptsize{$10$}};

\draw[gray] (2.5,6.5) node {\scriptsize{$1$}};
\draw[gray] (2.5,5.5) node {\scriptsize{$3$}};
\draw[blue] (2.5,4.5) node {\bf\scriptsize{$5$}};
\draw[blue] (2.5,3.5) node {\bf\scriptsize{$7$}};
\draw[gray] (2.5,2.5) node {\scriptsize{$9$}};

\draw[gray] (3.5,4.5) node {\scriptsize{$2$}};
\draw[blue] (3.5,3.5) node {\bf\scriptsize{$4$}};
\draw[gray] (3.5,2.5) node {\scriptsize{$6$}};

\draw[gray] (4.5,3.5) node {\scriptsize{$1$}};
\draw[gray] (4.5,2.5) node {\scriptsize{$3$}};

\draw[gray] (5.5,3.5) node {\tiny{$-2$}};
\draw[blue] (5.5,2.5) node {\bf\scriptsize{$0$}};
\draw[blue] (5.5,1.5) node {\bf\scriptsize{$2$}};
\draw[gray] (5.5,0.5) node {\scriptsize{$4$}};

\draw[gray] (6.5,3.5) node {\tiny{$-5$}};
\draw[gray] (6.5,2.5) node {\tiny{$-3$}};
\draw[blue] (6.5,1.5) node {\bf\tiny{$-1$}};
\draw[blue] (6.5,.5) node {\bf\scriptsize{$1$}};

\draw[gray] (7.5,1.5) node {\tiny{$-4$}};
\draw[blue] (7.5,0.5) node {\bf\tiny{$-2$}};

\end{tikzpicture}
\begin{tikzpicture}[scale=0.375]
\draw [step=1, gray!20!white] (0,0) grid (4,6);


\draw[gray,very thick] (0,6)--(0,0)--(4,0); 

\draw[blue,very thick] (0,3)--(0,0)--(2,0); 

\draw[blue] (2,-1) node {\scriptsize{$\{4,6,7,8,10\}$}};

\draw[gray] (-0.5,5.5) node {\scriptsize{$0$}};
\draw[gray] (-0.5,4.5) node {\scriptsize{$2$}};
\draw[gray] (-0.5,3.5) node {\scriptsize{$4$}};
\draw[blue] (-0.5,2.5) node {\bf\scriptsize{$6$}};
\draw[blue] (-0.5,1.5) node {\bf\scriptsize{$8$}};
\draw[blue] (-0.5,.5) node {\bf\scriptsize{$10$}};

\draw[gray] (0.5,5.5) node {\tiny{$-3$}};
\draw[gray] (0.5,4.5) node {\tiny{$-1$}};
\draw[gray] (0.5,3.5) node {\scriptsize{$1$}};
\draw[gray] (0.5,2.5) node {\scriptsize{$3$}};
\draw[gray] (0.5,1.5) node {\scriptsize{$5$}};
\draw[blue] (0.5,.5) node {\bf\scriptsize{$7$}};

\draw[blue] (1.5,.5) node {\bf\scriptsize{$4$}};

\draw[gray] (2.5,.5) node {\scriptsize{$1$}};

\draw[gray] (3.5,.5) node {\tiny{$-2$}};

\end{tikzpicture}
\begin{tikzpicture}[scale=0.375]
\draw [step=1, gray!20!white] (0,0) grid (2,3);

\draw[blue,very thick] (0,3)--(0,0)--(2,0); 

\draw[blue] (1,-1) node {\scriptsize{$\{-2,0,1,2,4\}$}};

\draw[blue] (-0.5,2.5) node {\bf\scriptsize{$0$}};
\draw[blue] (-0.5,1.5) node {\bf\scriptsize{$2$}};
\draw[blue] (-0.5,.5) node {\bf\scriptsize{$4$}};

\draw[gray] (0.5,2.5) node {\tiny{$-3$}};
\draw[gray] (0.5,1.5) node {\tiny{$-1$}};
\draw[blue] (0.5,.5) node {\bf\scriptsize{$1$}};

\draw[blue] (1.5,.5) node {\bf\tiny{$-2$}};

\end{tikzpicture}
\begin{tikzpicture}[
        > = stealth, 
            shorten > = 2pt, 
            shorten < = 2pt, 
            auto,
            semithick 
        ]
\tikzset{myptr/.style={decoration={markings,mark=at position .75 with 
    {\arrow[scale=2,>=stealth]{>}}},postaction={decorate}}}
\tikzset{state/.style={circle,draw,minimum size=6mm, inner sep=0pt}, }

    \node (3) {{\scriptsize \{4,5,6,7,8\}} };
    \node[below right  of=3, xshift=6mm, yshift=-20mm]  (2) 
{\scriptsize \{4,6,7,8,10\}};
    \node [below left  of=3, xshift=-6mm, yshift=-20mm] (1)
{\scriptsize \{-2,-1,0,1,2\}};
    \node [below of=3, yshift=-40mm] (0) {\scriptsize \{-2,0,1,2,4\}};

    \draw[myptr] (2) -- (3);
    \draw[myptr] (0) -- (3);
    \draw[myptr] (0) -- (2);
    \draw[myptr] (0) -- (1);
\end{tikzpicture}

%
%

\end{center}
\caption{On the first step we remove two good intervals and record the corresponding skeletons: $\{-2,-1,0,1,2\}$ and $\{4,5,6,7,8\}.$ On the second step there is only one good interval, with the corresponding skeleton $\{4,6,7,8,10\}.$ Finally, on the last step we are left with a $(3,2)$-Dyck path, which is its own good interval. The corresponding skeleton is $\{-2,0,1,2,4\}.$ On the right we have the reconstructed labeled graph. Note the sinks are the first intervals removed.  Note, that it is isomorphic to the graph in Figure \ref{Figure-example: graph}. 
\label{Figure: reconstructing G}}
\end{figure}

We conclude that since the maps $A:\MNM/\mathord\sim\to T_{n,m}^d$ and $B:T_{n,m}^d\to\YNM$ are bijections, the map $\CD=B\circ A:M_{N,M}/\mathord\sim\to \YNM$ is a bijection as well.


\begin{theorem}
The sweep map $\zeta:\YNM\to \YNM$ factorizes according to Figure
\ref{commutative triangle 2}:
$$
\zeta=\CG\circ \CD^{-1}
$$
for all positive $N, M$.
\end{theorem}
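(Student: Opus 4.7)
The plan is to show $\zeta(D) = \CG([\Delta])$ for $D = \CD([\Delta])$ by lifting the coprime proof while controlling the $\zeta$-tie-breaking. I would pick the minimal representative $\Delta^G$ of Lemma \ref{Lemma: representative}: its $(N,M)$-skeleton is $\bigsqcup_i S_i$ with $S_i = ds_i + i$, and sorting it in increasing integer order is lex-sorting by $(s, i)$ with $s$ ascending first, then $i$. I would aim to match this lex order with the sort $\zeta$ imposes on the steps of $D$, under a bijection that sends $N$-generators to vertical and $M$-cogenerators to horizontal steps.

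The first key step is the rank/type correspondence. Since $\rank^{(N,M)}(x,y) - \rank^{(n,m)}(x,y) = (d-1)mn$ is independent of $(x,y)$ and both ranks are unchanged by the periodic shift $(x,y)\mapsto(x-m,y+n)$, the $(N,M)$-rank of a step is determined by its geometric position alone. Using that $D_0$ is placed at the top-left of the $(N,M)$-rectangle, and that each subsequent insertion in the gluing algorithm occurs at a lattice point corresponding to a shared skeleton element (Lemma \ref{Lemma: intersections}), I would argue that the $(N,M)$-ranks of the color-$i$ steps of $D$ form exactly the set $s_i$, with $n$-generators and $m$-cogenerators of $\Delta_i^G$ appearing as vertical and horizontal steps respectively. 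Because $ds+i$ is an $N$-generator of $\Delta^G$ iff $s$ is an $n$-generator of $\Delta_i^G$, the natural bijection $S_i \ni ds+i \leftrightarrow$ (the unique color-$i$, rank-$s$ step of $D$) is compatible with integer values, $(N,M)$-ranks, and types simultaneously.

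The main obstacle is the tie-breaking. By Lemma \ref{Lemma: intersections} combined with the monotone labeling of $G$, any two colors contributing a step at the same rank $s$ must sit at different $f$-levels (same-level vertices are disconnected and hence have disjoint skeletons), so the $k$ rank-$s$ steps of $D$ come from $k$ distinct indices $i$ and the ``order by $i$'' is unambiguous. The $\zeta$-rule orders same-rank steps by reverse appearance in $D$, whereas the sorted skeleton orders them by ascending $i$; one must show that these two orderings induce the same h/v sequence. The cleanest formulation is the structural claim that the rank-$s$ steps of $D$ appear, reading $D$ from bottom-right to top-left, in descending order of $i$. I would prove this by induction on $d$, removing a sink $v$ of $G$ (which in the monotone labeling has the largest remaining index and is the last interval glued) and analyzing its insertion at the first intersection $p_v$ with the current path: any pre-existing rank-$s$ step with $s \in s_v$ lying strictly before $p_v$ would yield an earlier intersection of $P_v$ with the current path, contradicting the defining property of $p_v$. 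This reduces the claim to $d-1$ colors, with base case $d=1$ supplied by Section \ref{sec-coprime}. The careful ``first intersection'' bookkeeping around $p_v$ is the main technical obstacle, and it is what carries the relatively prime factorization $\zeta = \CG \circ \CD^{-1}$ over to the general setting.
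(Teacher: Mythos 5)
Your proposal follows the paper's own argument: take the minimal representative $\Delta$, identify the $(N,M)$-rank of each step of $D=\CD(\Delta)$ with $\lfloor x/d\rfloor$ for the corresponding skeleton element $x$, and use the mechanics of the gluing algorithm to show that rank-tied steps occur in $D$ in descending color order from bottom-right to top-left, which is exactly what the reverse-appearance tie-break in $\zeta$ requires. The paper states this ordering claim in one sentence without detailed justification; your induction on $d$ by peeling off sinks of $G$ and tracking the first intersection point is a reasonable way to fill in that step, so this is the same approach rather than a different one.
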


\begin{proof}
Let $D\in \YNM$ be a Dyck path, and let $\Delta\in \MNM$ be a minimal representative of the equivalence class $\CD^{-1}(D)\subset\MNM.$ Similar to the $d=1$ case, the steps of the path $D$ correspond to the elements of the skeleton $S$ of $\Delta.$ However, the correspondence is a bit trickier. Since the $\Delta$ is a minimal representative, the rank of a step of $D$ equals  $\lfloor\frac{x}{d}\rfloor,$ where $x$ is the corresponding element of the skeleton of $\Delta$. However, according to the gluing algorithm, if $x,y\in S$ are two elements of the skeleton of $\Delta,$ such that $x<y$ and $\lfloor\frac{x}{d}\rfloor=\lfloor\frac{y}{d}\rfloor,$ then the step corresponding to $x$ is glued in lower than the step corresponding to $y.$ In turn, that implies that the step in $D$ corresponding to $x$ appears higher than the step corresponding to $y,$ which matches with the ``tie breaking'' adjustment in the construction of the sweep map in the non relatively prime case (see Example \ref{Example: zeta nonrp}). 
\end{proof}

The following proposition gives a simple interpretation of the $\area$ statistic for rational Dyck paths in terms of $(N,M)$--invariant subsets.

\begin{proposition}
Let $\Delta\in \MNM$. Then 
$$
\area(\CD(\Delta))=\min_{\Delta'\sim \Delta}\gap(\Delta'),
$$
where, as above, $\gap(\Delta')=|\BZ_{\ge 0}\setminus \Delta'|.$
\end{proposition}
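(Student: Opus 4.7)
Since $\CD$ descends to $\MNM/\mathord\sim$, both $\area(\CD(\Delta))$ and $\min_{\Delta'\sim\Delta}\gap(\Delta')$ are invariants of the equivalence class $[\Delta]$. My plan is therefore to work with a minimal representative $\Delta^{\min} \in [\Delta]$ as constructed in Lemma \ref{Lemma: representative}, and prove: (A) $\gap$ is minimized over $[\Delta]$ at $\Delta^{\min}$; (B) $\area(\CD(\Delta^{\min})) = \gap(\Delta^{\min})$. Combined with the equality $\area(\CD(\Delta)) = \area(\CD(\Delta^{\min}))$, these give the result.

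For (A), decompose any $\Delta' \in [\Delta]$ via residues as $\Delta' = \bigsqcup_{r=0}^{d-1}(d \Delta'_r + r)$ from \eqref{colorDelta}, with each $\Delta'_r \subset \BZ_{\ge 0}$ an $(n,m)$-invariant subset; setting $c'_r := \min \Delta'_r$, one has
\[ \gap(\Delta') = \sum_{r=0}^{d-1} |\BZ_{\ge 0}\setminus \Delta'_r| = \sum_{r>0} c'_r + \sum_{r=0}^{d-1} |\BZ_{\ge c'_r}\setminus \Delta'_r|. \]
The second summand depends only on the translation classes of the $\Delta'_r$, whose multiset is an invariant of $[\Delta]$. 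Minimizing $\gap$ therefore reduces to minimizing $\sum_{r>0} c'_r$ over acceptable integer shifts of the skeleton; by Lemma \ref{Lemma: minimality} this minimum is attained componentwise, and by Lemma \ref{Lemma: representative} it corresponds to the minimal representatives.

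For (B), I would induct on $d$. The base case $d = 1$ is the standard coprime identity, coming from the rank function's bijection between $\BZ_{\ge 0}\setminus \Delta$ and the boxes of $R_{n,m}$ below the diagonal but outside $\CD(\Delta)$. For $d \ge 2$, pick a sink vertex $v$ of $G = A(\Delta^{\min}) \in T_{n,m}^d$ with label $\Delta_v$, and set $c_v := \min \Delta_v$. By Lemma \ref{Lemma: extensions of good intervals} and the bijection $B$ from Section \ref{sec-Dyck}, removing $v$ from $G$ corresponds to removing a good interval from $\CD(\Delta^{\min})$, yielding a Dyck path $D' = \CD(\Delta^{\prime \min}) \in Y_{(d-1)n,(d-1)m}$ where $\Delta^{\prime \min}$ is a minimal representative of the class corresponding to $G \setminus v$. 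The inductive hypothesis gives $\area(D') = \gap(\Delta^{\prime \min})$, so it remains to verify
\[ \area(\CD(\Delta^{\min})) - \area(D') = c_v + |\BZ_{\ge c_v}\setminus \Delta_v|, \]
matching the gap increment from the decomposition in (A).

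The main obstacle is computing this area increment under the gluing $D' \vee P_v$. The $n \times m$ sub-rectangle of $\CD(\Delta^{\min})$ containing the inserted interval of $P_v$ has lower-right corner $(X_v, Y_v)$ located at the first intersection of $D'$ with $P_v$; minimality of $\Delta^{\min}$ forces the relation $nX_v + mY_v = dnm - c_v$, measuring how far the $v$-sub-rectangle is offset from its stacked position on the big diagonal. Using this, the area increment splits into (i) the local $(n,m)$-area inside this sub-rectangle, which equals $|\BZ_{\ge c_v}\setminus \Delta_v| = \area(\CD_{n,m}(\Delta_v - c_v))$, and (ii) an offset contribution of $c_v$ boxes arising from the translation by $(-m, n)$ of the prior portion of the path in passing from $R_{(d-1)n,(d-1)m}$ to $R_{N,M}$. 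Summing (i) and (ii) yields the identity and closes the induction.
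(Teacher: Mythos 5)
Your part (A) coincides with the paper's opening reduction: both rewrite $\gap(\Delta')$ as $\sum_r c'_r + \sum_r \gap(\Delta'_r-c'_r)$ (your $c'_r$ is the paper's $m_r$), observe that the second sum is a translation invariant of the equivalence class, and so reduce to minimizing $\sum c'_r$, which Lemma \ref{Lemma: minimality} identifies with the minimal representatives. That part is essentially identical in content.

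For part (B) you take a genuinely different route. The paper establishes $\area(\CD(\Delta^{\min}))=\gap(\Delta^{\min})$ in one shot: it decomposes the region between $\CD(\Delta^{\min})$ and the diagonal into the $d$ local regions below the glued-in $(n,m)$-paths together with $d$ parallelograms between the local diagonals and the global one, and uses the rank bijection on each parallelogram to see it contributes exactly $m_r$ boxes (with ranks $0,\dots,m_r-1$). You instead induct on $d$, peeling off one sink vertex (equivalently, one good interval) at a time via the bijection $B$, and track the area increment. The two are closely related: your ``(i) local area $=|\BZ_{\ge c_v}\setminus\Delta_v|$'' and ``(ii) offset $=c_v$'' are exactly one summand of the paper's decomposition, just extracted one component at a time rather than all at once. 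What the paper's direct argument buys you is freedom from the bookkeeping your induction requires; what your induction buys is a more local, step-by-step verification tied to the gluing algorithm.

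The weak link in your version is the inductive step itself, which you outline but do not establish. First, you need to check that removing the good interval from $\CD(\Delta^{\min})$ produces $\CD$ applied to a \emph{minimal} representative of the class attached to $G\setminus v$ — that removing a sink from a labeled graph in $T_{n,m}^d$ and passing through Lemma \ref{Lemma: representative} is compatible with good-interval removal (this is implicit in the proof that $B$ is a bijection, but should be cited). Second, the coordinate relation you assert, $nX_v+mY_v=dnm-c_v$, and the ensuing split of the area increment into (i) and (ii), are stated rather than derived. To make this precise you would want to use the observation (which the paper records in the factorization theorem) that for a minimal representative the $\rank_{N,M}$ of a step in $D$ equals $\lfloor x/d\rfloor$ for the corresponding skeleton element $x$, so that ranks pass unchanged from the local $(n,m)$-picture to the global $(N,M)$-picture; this is what forces the offset parallelogram to have exactly $c_v$ boxes. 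With these two points supplied, your inductive argument closes and is a valid alternative to the paper's direct computation.
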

\begin{proof}
As before, let $\Delta_0,\ldots,\Delta_{d-1}$ be the $d$-tuple of $(n,m)$-invariant subsets defined by 
$$
\Delta_r=\left[\left(\Delta\cap (d\mathbb Z+r)\right)-r\right]/d.
$$ 
Let also $m_r=\min\Delta_r.$ Then
$$
\gap(\Delta)=\sum_{r}\gap(\Delta_r)=\sum_r \left[m_r+\gap(\Delta_r-m_r)\right].
$$
Note that $(\Delta_r-m_r)\in \Mnm$ and, in particular, $\gap(\Delta_r-m_r)=\area\left[\CD(\Delta_r-m_r)\right],$ because in the relatively prime case the $\area$ statistic counts the boxes whose ranks are exactly the gaps, and each gap is counted exactly once. It follows that to obtain the $\Delta'$ with  minimal $\gap$ over the equivalence class of $\Delta$ one should consider an invariant subset with the minimal $\sum_r m_r,$ which is equivalent to considering one of the minimal representatives. Therefore, it is sufficient to prove that if $\Delta$ is a minimal representative, then $\area(\CD(\Delta))=\gap(\Delta)$.
Let us compute the area between $\CD(\Delta)$ and the diagonal in the $(N,M)$ rectangle $\RNM$. It consists of the areas between the Dyck paths (possibly shifted and disconnected) for the $(n,m)$-invariant subsets $\Delta_r$
and their local diagonals, and the parallelograms between these small diagonals and the big diagonal. Since $\Delta$ is a minimal representative, the smallest rank of a box that fits under the local diagonal corresponding to $\Delta_r$ is $m_r.$ Therefore, such a parallelogram contains the boxes with all possible ranks between $0$ and $m_r$, each rank appearing exactly once. Therefore $\area(\CD(\Delta))=\gap(\Delta)$.
\end{proof}

\subsection{Example: $k$-Catalan arrangement}
\label{sec-catalan}
Let us describe the equivalence relation in the case $M=kN$, $k\in \BZ_{>0}$. In this case, $d=N$, $n=1$ and $m=k$. 
A module is $(N,M)$-invariant if and only if it is $N$-invariant, and therefore has the form
$$
\Delta(k_0,\ldots,k_{N-1})=\{k_i+Nj\ :\ i=0,\ldots,N-1, j\ge 0\},
$$
where $k_i$ is an arbitrary integer with remainder $i$ modulo $N$. 
To be $0$-normalized we further require $k_0=0$ and $k_i \ge 0$.  Now 
$\Delta_i=\{k_i+Nj:j\ge 0\}$, so the skeleton $S_i$ has a unique $N$-generator $k_i$ and has $k$ $M$-cogenerators
$k_i-N,\ldots,k_i-kN$. Therefore the $i$-th skeleton of $\Delta$ has the form
$$
S_i=\{k_i,k_i-N,\ldots,k_i-kN\}.
$$

Recall that the $k$-Catalan arrangement in $\BR^{N}$ is defined by the equations $x_i-x_j=s$ where $i<j$ and
$s$ runs through $\{-k,\ldots,k\}$, and the $k$-Shi arrangement is defined by the same equations with $s\in \{-(k-1),\ldots,k\}$.
We will call the connected components of their complements $k$-Catalan and $k$-Shi regions, respectively.
Clearly, in the dominant cone where $x_1<\ldots<x_N$ the arrangements agree and it is known that the number of 
dominant $k$-Shi regions is equal to the $n$th Fuss-Catalan number
$$
c_N(k):=\frac{((k+1)N)!}{(kN+1)!N!},
$$
which is also equal to the number of Dyck paths in the $N\times kN$ rectangle. Since the $k$-Catalan arrangement is $S_N$-invariant, the total number of $k$-Catalan regions equals $N!c_N(k)$.

If we pass to $V= \BR^N/\Rspan(1,1,\ldots,1)$,
the connected components of the complement of the hyperplane
arrangement $\{x_i-x_j=s | s \in \BZ \}$ are called alcoves.
Observe that while these regions are unbounded in $\BR^N$,
in $V$ they are bounded and each alcove has centroid
of the form $(\frac {a_1}{N}, \cdots, \frac {a_N}{N} )$
with $a_i \in \BZ$ and  $\{ a_i \bmod N \}$ distinct.
We will always take our representative of 
$(\frac {a_1}{N}, \cdots, \frac {a_N}{N} ) + \Rspan(1,1,\ldots,1)$
to be such that $\min\{a_i\} = 0$. This is compatible
with taking $\Delta$ to be $0$-normalized. 
(Note that in the literature one often normalizes to be
``balanced,"  so  that
$\sum a_i = 0$ and  $\sum k_i = \binom{N+1}{2}$.) 

Note further that to each $\Delta(k_0, \ldots, k_{N-1})$ we can
associate the alcove that has centroid
$p_{\Delta} = (\frac{k_{0}}{N}, \cdots, \frac{k_{N-1}}{N})$.
Since $\Delta$ is independent of the order of the $k_i$,
we could just as easily associate to it the alcove
in the dominant cone $x_1 < \cdots < x_N$
that has centroid $p^+_{\Delta} = (\frac{k_{\sigma(0)}}{N}, \cdots,
\frac{k_{\sigma(N-1)}}{N})$, where $\sigma \in \Perm\{0, 1, \ldots,
N-1\}$ is chosen so that $k_{\sigma(i)} < k_{\sigma(i+1)}$.

\begin{proposition}
The set of integral acceptable shifts for $\Delta$ for which
the shifting is by distinct integers $\bmod N$  is in  bijection
with the set of alcoves that are in the
same $k$-Catalan region as $p_{\Delta}$.
\end{proposition}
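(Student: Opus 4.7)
The plan is to translate the whole question to the $k$-Catalan arrangement by a linear change of coordinates.

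First I would compute the forbidden hyperplanes in shift space explicitly. Using the explicit skeletons $S_i=\{k_i-\ell N:0\le \ell\le k\}$, the sets $S_i+a_i$ and $S_j+a_j$ share an element if and only if $(k_i+a_i)-(k_j+a_j)=tN$ for some integer $t\in\{-k,\dots,k\}$. So the forbidden hyperplanes in the shift space $\{(0,a_1,\dots,a_{N-1})\}\subset\BR^N$ are $(a_i-a_j)=(k_j-k_i)+tN$ for $-k\le t\le k$.

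Next I would pass to coordinates $y_i:=(k_i+a_i)/N$ on $V=\BR^N/\Rspan(1,\dots,1)$. This sends the origin of shift space to $p_\Delta$ and transforms the forbidden hyperplanes into $y_i-y_j=t$ for $t\in\{-k,\dots,k\}$---that is, exactly the $k$-Catalan hyperplanes on $V$. The acceptability condition of Definition~\ref{def:shift} then reads: a shift $(a_1,\dots,a_{N-1})$ is acceptable if and only if there is a continuous path in $V$ from $p_\Delta$ to
$$p':=\left(\tfrac{k_0}{N},\tfrac{k_1+a_1}{N},\dots,\tfrac{k_{N-1}+a_{N-1}}{N}\right)$$
avoiding every $k$-Catalan hyperplane, equivalently $p'$ lies in the same $k$-Catalan region as $p_\Delta$.

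Finally I would invoke the characterization of alcove centroids recalled in the excerpt: a point of $V$ is the centroid of an alcove iff it can be written $(c_0/N,\dots,c_{N-1}/N)$ with $c_i\in\BZ$ and $\{c_i\bmod N\}$ distinct. Setting $c_i:=k_i+a_i$, integrality of the shift says $c_i\in\BZ$, and the distinct-residues-mod-$N$ hypothesis says the $c_i$ are distinct mod $N$. So integral acceptable shifts satisfying the distinctness condition correspond exactly to alcove centroids in the $k$-Catalan region of $p_\Delta$, which in turn correspond to alcoves in that region. The assignment $(a_1,\dots,a_{N-1})\mapsto\text{alcove containing }p'$ is the required bijection, and its inverse picks, for a given alcove, the unique representative of its centroid with $c_0=0$ and sets $a_i:=c_i-k_i\in\BZ$; injectivity is immediate since $a_i$ is recovered as $c_i-k_i$.

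The main technical subtlety I anticipate is managing the coset normalization. Centroids live in $V$ and are only defined modulo $\Rspan(1,\dots,1)$, while shifts are anchored by $a_0=0$. The excerpt's convention normalizes representatives by $\min c_i=0$, whereas the bijection above naturally uses the representative with $c_0=0$; these differ in general, but since an alcove is determined by its centroid as a coset in $V$, the choice of representative does not affect the bijection. No other real obstacle is expected: the equality of the polytope of acceptable shifts with the translate of the $k$-Catalan region of $p_\Delta$ follows immediately from Steps~1 and~2, and is consistent with the polyhedral description of acceptable shifts, since $k$-Catalan regions are intersections of open half-spaces and hence convex.
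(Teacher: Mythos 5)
Your proof is correct and follows essentially the paper's argument: identify the collision hyperplanes in shift space, pass to the coordinates $y_i=(k_i+a_i)/N$ so that these become the $k$-Catalan hyperplanes while the origin maps to $p_\Delta$, and then match integral shifts satisfying the distinctness condition with alcove centroids. The paper phrases acceptability as preservation of the relative order of the colored skeleton elements (equivalently, preservation of the sign of $k_i/N - k_j/N - (t-t')$) rather than as the existence of a collision-free continuous path, but after the change of variables these are the same condition; you are somewhat more explicit than the paper about the coset normalization of the alcove centroid, which is a harmless point.
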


\begin{proof}
Indeed, the shifting $(a_0,\ldots,a_{N-1})$ is acceptable if and only
if for all $i$ and $j$ the  order of (colored) points in the sets $S_i
\cup S_j$ and $S_i+a_i \cup S_j+a_j$ is the same.
As the order of $x$ and $y$ (i.e., whether $x<y$) is determined
by whether $x-y<0$ and since our shifting is by integers distinct
$\bmod N$, it suffices to consider the signs of such differences.
 More algebraically, for all pairs
$(x=k_i-tN, y=k_j-t'N)\in S_i \times S_j$ the sign of
$x-y$ and the sign of $(x+a_i)-(y+a_j)$ is the same.
The sign  the $x-y$
is determined by the sign of $k_i/N-k_j/N-(t-t')$, so we require that
the points $(k_0/N,\ldots,k_{N-1}/N)$ and
$((k_0+a_0)/N,\ldots,(k_{N-1}-a_{N-1})/N)$ are on the same side of the
hyperplane $x_i-x_j=t-t'$.  (This is still true if we look at points
whose coordinates
are sorted to lie in the dominant cone, i.e., the alcoves
in the same region as $p^+_\Delta$.)
It remains to notice that possible values of $t-t'$ run between $-k$ and $k$. 
\end{proof}


We conclude that the set of equivalence classes of $(N,kN)$--invariant
subsets is in bijection with the set of dominant $k$-Catalan (or,
equivalently, dominant $k$-Shi) regions, and both sets have $c_N(k)$
elements. Therefore our main construction provides yet another
bijection [FV10] between dominant $k$-Shi regions and Dyck paths in
$N\times kN$ rectangle. 

\section{Relation to knot invariants}

In this section we prove Theorem \ref{EH comparison}. We will use the following result:

\begin{theorem}(\cite[Theorem 1.9]{EH})
\label{th: EH}
The Poincar\'e series of the $(a=0)$ part of the Khovanov-Rozansky homology of the $(n,n)$ torus link equals
$$
F_n(q,t)=\sum_{a=(a_1,\ldots,a_n)\in \BZ_{\ge 0}^{n}}q^{\sum a_i}t^{d(a)},
$$
where $d(a)=|\{i<j\ :\ a_i=a_j\ \text{or}\ a_j=a_i+1\}|$.
\end{theorem}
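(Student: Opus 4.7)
The plan is to derive Theorem \ref{EH comparison} from Theorem \ref{th: EH} by establishing the equivalent combinatorial identity
\[
C_{N,N}(q,t) \;=\; (1-q)\,F_N(q,t).
\]
Indeed, Theorem \ref{th: EH} identifies the Poincar\'e series of the $(a=0)$ part of the Khovanov--Rozansky homology of the $(N,N)$ torus link with $F_N(q,t)$, so Conjecture \ref{conj: links}(b) in the case $M=N$ is precisely this identity. I will work out this identity combinatorially.

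In the case $M=N$ one has $n=m=1$, and by Section \ref{sec-catalan} every $\Delta\in\MNM$ is uniquely described by a tuple $(0,k_1,\ldots,k_{N-1})\in\{0\}\times\ZZ^{N-1}_{\ge 0}$, so that $\gap(\Delta)=k_1+\cdots+k_{N-1}$ and $\dinv(\Delta)$ is the area of the Dyck path $\CG(\Delta)$ read off from the sorted skeleton $\bigsqcup_r\{N(k_r-1)+r,\,Nk_r+r\}$. Since the statistic $d(a)$ is invariant under the diagonal shift $a\mapsto a+(1,\ldots,1)$, one has $F_N=F_N^0/(1-q^N)$ with $F_N^0 := \sum_{a:\,\min a_i=0} q^{|a|}t^{d(a)}$, and the target identity is therefore equivalent to the finite-denominator statement
\[
[N]_q\cdot C_{N,N}(q,t) \;=\; F_N^0(q,t).
\]

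To prove this, I will construct a statistic-preserving bijection $\Psi$ between $\{0,1,\ldots,N-1\}\times\MNM$ and $\{a\in\ZZ^N_{\ge 0}:\min a_i=0\}$, taking $(\ell,\Delta)\mapsto a$ with $|a|=\ell+\gap(\Delta)$ and $d(a)=\dinv(\Delta)$. Guided by the small cases $N\le 3$ (which I have checked directly), the bijection partitions the target by the first-zero position $j(a):=\min\{i:a_i=0\}\in\{1,\ldots,N\}$, pairing $j=N-\ell$ with the $\ell$-th copy of $\MNM$; within each piece the tuple $(k_1,\ldots,k_{N-1})$ is recovered from $a$ by a cyclic reindexing together with subtracting $1$ from the $j-1$ entries preceding the first zero, which accounts for the extra $q^{\ell}=q^{N-j}$ factor and, when summed over $j$, assembles into $[N]_q$.

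The main obstacle is verifying $d(a)=\dinv(\Delta)$ under $\Psi$. Both statistics count pairs satisfying an adjacency relation, but on the $d$-side this involves the linear order on $\{1,\ldots,N\}$, whereas on the $\dinv$-side it involves cyclic residue labels and the integers $k_r$. The proof proceeds by case analysis on whether each pair of indices lies entirely before, straddles, or lies entirely after the position $j$; in each case the conditions ``$a_i=a_j$ or $a_j=a_i+1$'' correspond exactly to overlaps of the cogenerator-generator windows $[N(k_r-1)+r,\,Nk_r+r]$ and $[N(k_{r'}-1)+r',\,Nk_{r'}+r']$ in the skeleton of $\Delta$. An induction on $N$, using removal of a sink vertex of the graph $G_S$ from Section \ref{sec-equivrel} (equivalently, removal of a good interval in the gluing algorithm of Section \ref{sec-glue}), should allow this comparison to be bootstrapped from the trivial case $N=1$.
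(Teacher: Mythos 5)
The statement you were asked to prove is Theorem~\ref{th: EH}, which is a result of Elias and Hogancamp (Theorem~1.9 of \cite{EH}). The present paper does not prove it; it cites it as an external input and uses it as a black box. Your proposal does not address Theorem~\ref{th: EH} at all: by your own opening sentence you \emph{assume} Theorem~\ref{th: EH} and instead outline a derivation of Theorem~\ref{EH comparison}. So, strictly speaking, you have answered a different question; there is no proof of Theorem~\ref{th: EH} in your write-up (nor in this paper) to compare against.

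It is still worth commenting on the material you did write, since it targets the neighboring Theorem~\ref{EH comparison}. The paper's argument there is short and explicit: Lemma~\ref{lem:cyclic shift} uses the cyclic shift $\pi:(a_1,\ldots,a_n)\mapsto(a_n-1,a_1,\ldots,a_{n-1})$ (which preserves $d$ and lowers $\sum a_i$ by $1$) to write $(1-q)F_n(q,t)=\sum_{a_n=0}q^{\sum a_i}t^{d(a)}$, and then a direct two-line computation identifies $0$-normalized $n$-invariant subsets with tuples having one coordinate equal to~$0$, matching $\gap$ with $\sum a_i$ and $\dinv$ with $d(a)$ via the formula $\dinv(\Delta)=\binom{n}{2}-\#\{i,j:y_j>x_i\}$. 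Your route, via the diagonal-shift normalization $F_N=F_N^0/(1-q^N)$ and a putative bijection between $\{0,\ldots,N-1\}\times\MNM$ and $\{a:\min a_i=0\}$ graded by first-zero position, is considerably more elaborate and is not actually carried out: you never define $\Psi$ beyond ``cyclic reindexing together with subtracting $1$,'' you defer the key equality $d(a)=\dinv(\Delta)$ to an unspecified ``case analysis,'' and the closing claim that an induction on $N$ via removal of a sink vertex ``should allow this comparison to be bootstrapped'' is a hope rather than an argument. Note also that the single cyclic shift $\pi$ already does the normalizing work you want; passing through the $\min$-normalization and the $[N]_q$ factor introduces an extra layer (one must then additionally relate the $\min$-normalized tuples to the $a_n=0$ tuples) with no compensating gain. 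If you wish to redo the paper's argument, the shortest repair is to prove Lemma~\ref{lem:cyclic shift} and then verify directly, as the paper does, that the $n$-generators $x_i=i+na_i$ and $n$-cogenerators $y_i=x_i-n$ give $\gap(\Delta)=\sum a_i$ and $\dinv(\Delta)=d(a)$.
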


\begin{lemma}
\label{lem:cyclic shift}
One has
$$
(1-q)F_n(q,t)=\sum_{a=(a_1,\ldots,a_{n-1},0)\in \BZ_{\ge 0}^{n}}q^{\sum a_i}t^{d(a)}.
$$
\end{lemma}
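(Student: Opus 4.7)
The plan is to reduce the lemma to a bijective identity via a uniform shift, and then prove that identity by constructing an explicit weight-preserving bijection. First, observe that the shift $a \mapsto a + (1,\ldots,1)$ preserves $d(a)$ (the statistic depends only on pairwise comparisons) and multiplies the $q$-weight by $q^n$. Partitioning $\ZZ^n$ according to $m = \min a_i$ and factoring out the resulting geometric series yields
\[
F_n(q,t) = \frac{1}{1-q^n} \sum_{a:\, \min a_i = 0} q^{\sum a_i}\, t^{d(a)}.
\]
Since $1-q^n = (1-q)[n]_q$ with $[n]_q = 1+q+q^2+\cdots+q^{n-1}$, the lemma reduces to
\[
\sum_{a:\, \min a_i = 0} q^{\sum a_i}\, t^{d(a)} \;=\; [n]_q \sum_{a:\, a_n = 0} q^{\sum a_i}\, t^{d(a)},
\]
which I will establish bijectively.

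Next, I construct a bijection
\[
\Phi : \{(b,s) : b \in \ZZ^n,\ b_n = 0,\ s \in \{0, 1, \ldots, n-1\}\} \longrightarrow \{a \in \ZZ^n : \min a_i = 0\}.
\]
Given $(b,s)$, let $p = n - s$ and define
\[
a_i = b_{i+s} \text{ for } 1 \le i \le p, \qquad a_i = b_{i-p} + 1 \text{ for } p < i \le n.
\]
Geometrically, $\Phi$ cyclically rotates $b$ by $s$ positions and then adds $1$ to the $s$ wrapped-around entries. Since $a_p = b_n = 0$ and $a_i \ge 1$ for $i > p$, the index $p$ is precisely the rightmost zero of $a$; conversely, given $a$ with $\min a_i = 0$, taking $p$ to be its rightmost zero and $s = n-p$ inverts the formula, so $\Phi$ is a bijection. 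A direct computation shows $\sum a_i = \sum b_i + s$, so the $q$-weights transform as required.

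The main technical step is to verify $d(\Phi(b,s)) = d(b)$. Writing $a = \Phi(b,s)$, classify each pair $i < j$ according to membership of the indices in $\{1,\ldots, p\}$ versus $\{p+1,\ldots,n\}$. Pairs with both indices $\le p$ correspond to pairs $(i+s, j+s)$ in $b$ with identical contributions, and pairs with both indices $> p$ correspond to $(i-p, j-p)$ in $b$, since adding $1$ to both entries preserves both equality and the ``$+1$'' relation. The mixed case is the heart of the argument: for $i \le p < j$, set $u = i+s$ and $v = j - p$, so that $v \le s < u$, $a_i = b_u$, and $a_j = b_v + 1$; then
\[
[a_i = a_j] + [a_j = a_i + 1] = [b_u = b_v + 1] + [b_v = b_u],
\]
which is exactly the contribution of the pair $(v, u)$ in $b$. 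The subtlety, and the main point to check, is that the two indicator conditions are \emph{swapped} between $a$ and $b$, but their sum is invariant. Combining the three cases gives $d(a) = d(b)$, and summing $\Phi$ over its domain produces the intermediate identity, completing the proof.
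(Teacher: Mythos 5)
Your proof is correct, and the core mechanism is the same as the paper's: the cyclic shift $\pi:(a_1,\ldots,a_n)\mapsto(a_n-1,a_1,\ldots,a_{n-1})$ preserves $d$ and lowers $\sum a_i$ by exactly one. The paper applies $\pi$ repeatedly: every $a\in\BZ_{\ge 0}^n$ equals $\pi^{-k}(b)$ for a unique $k\ge 0$ and a unique $b$ with $b_n=0$, which immediately produces the geometric series $\sum_{k\ge 0} q^k = \frac{1}{1-q}$. You instead split $\frac{1}{1-q}=\frac{1+q+\cdots+q^{n-1}}{1-q^n}$ and handle the two factors in two separate steps, but both steps are iterates of the same shift in disguise: adding $(1,\ldots,1)$ is $\pi^{-n}$, and your bijection $\Phi(b,s)$ (rotate by $s$ and increment the $s$ wrapped entries) is exactly $\pi^{-s}(b)$ for $0\le s<n$. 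In particular, your three-case verification that $d(\Phi(b,s))=d(b)$ subsumes and generalizes the paper's one-step check $d(\pi(a))=d(a)$, which is the $s=1$ case. Your route is longer but more explicit; the paper's one-step version is shorter because a single shift only moves one coordinate across the boundary, so the index bookkeeping is trivial, whereas you chose to check the general rotation and must therefore partition the index pairs into three types. Both arguments are complete and correct.
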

\begin{proof}
Let us define the cyclic shift operator $\pi:(a_1,\ldots,a_n)\mapsto (a_n-1,a_1,\ldots,a_{n-1}),$
which is well defined if $a_n>0$. By applying $\pi$ repeatedly, we can transform a given tuple $a$ to a tuple with $a_n=0$.
Clearly, $\sum \pi(a)=\sum a_i-1$ and one can check that
$d(\pi(a))=d(a)$. Therefore:
$$
F_n(q,t)=\sum_{k\ge 0}\sum_{a=(a_1,\ldots,a_{n-1},0)\in \BZ_{\ge 0}^{n}}q^{k+\sum a_i}t^{d(a)}=\frac{1}{1-q}\sum_{a=(a_1,\ldots,a_{n-1},0)\in \BZ_{\ge 0}^{n}}q^{\sum a_i}t^{d(a)}.
$$
\end{proof}

We are ready to prove Theorem \ref{EH comparison}.

\begin{proof}[Proof of Theorem \ref{EH comparison}]
By Lemma \ref{lem:cyclic shift} and Theorem \ref{th: EH} we need to prove the identity
$$
C_{n,n}(q,t)=\sum_{a=(a_1,\ldots,a_{n-1},0)\in \BZ_{\ge 0}^{n}}q^{\sum a_i}t^{d(a)}.
$$
A subset $\Delta\subset \BZ_{\ge 0}$ is $(n,n)$--invariant if and only if it is $n$--invariant. In remainder $i$ it has an $n$-generator $x_i=i+na_i$ and an $n$-cogenerator $y_i=i+na_i-n$, for some $a_i\ge 0$. It is $0$-normalized if $a_n=0$. It is easy to check that 
$\gap(\Delta)=\sum a_i$. Now
$$
\dinv(\Delta)=\binom{n}{2}-\sharp\{i,j:y_j>x_i\}=\binom{n}{2}-\sharp\{i<j:a_j>a_i+1\}-\sharp\{i>j:a_j>a_i\}=d(a).
$$
\end{proof}

\begin{example}
Let us compute $C_{2,2}(q,t)$. All 2-invariant 0-normalized subsets have the form 
$$
\Delta_k=\{0,2,\ldots,2k,2k+1,2k+2,\ldots\}.
$$
Clearly, $\gap(\Delta_k)=k$ and 
$$
\dinv(\Delta_k)=\begin{cases}
1,\ \text{if}\ k=0,\\
0,\ \text{if}\ k>0.\\
\end{cases}
$$
Therefore
$$
C_{2,2}(q,t)=\sum_{k=0}^{\infty}q^{\gap(\Delta_k)}t^{\dinv(\Delta_k)}=t+\frac{q}{1-q}=\frac{q+t-qt}{1-q}.
$$
Note that $c_{2,2}(q,t)=q+t$.
\end{example}

\end{document}